\tikzset{my loop/.style =  {to path={
  \pgfextra{}
  [looseness=12,min distance=6mm]
  \tikz@to@curve@path},font=\sffamily\small
  }}
\newtheorem{theorem}{Theorem}[section]
\newtheorem{lemma}[theorem]{Lemma}
\newtheorem{corollary}[theorem]{Corollary}
\newtheorem{proposition}[theorem]{Proposition}
\newtheorem{prop}[theorem]{Proposition}
\theoremstyle{definition}
\theoremstyle{remark}
\newtheorem{question}{Question}
\newcommand{\change}[1]{{\color{black}#1}}
\newcommand*{\R}{\mathbb{R}}
\newcommand*{\N}{\mathbb{N}}
\newcommand*{\C}{\mathbb{C}}
\newcommand*{\eps}{\varepsilon}
\title{On a conjecture of Sokal concerning roots of the independence polynomial}
\author{Han Peters \thanks{Korteweg de Vries Institute for Mathematics, University of Amsterdam. Email: \texttt{hanpeters77@gmail.com.}}\and Guus Regts\thanks{Korteweg de Vries Institute for Mathematics, University of Amsterdam. Email: \texttt{guusregts@gmail.com}. Supported by a personal NWO Veni grant.}}
\begin{document}
\maketitle
\abstract{A conjecture of Sokal \cite{S1} regarding the domain of non-vanishing for independence polynomials of graphs, states that given any natural number $\Delta \ge 3$, there exists a neighborhood in $\mathbb C$ of the interval $[0, \frac{(\Delta-1)^{\Delta-1}}{(\Delta-2)^{\Delta}})$ on which the independence polynomial of any graph with maximum degree at most $\Delta$ does not vanish. We show here that Sokal's Conjecture holds, as well as a multivariate version, and prove optimality for the domain of non-vanishing. An important step is to translate the setting to the language of complex dynamical systems.

\begin{footnotesize}
Keywords: Independence polynomial, hardcore model, complex dynamics, roots, approximation algorithms.
\end{footnotesize}
}

\section{Introduction}
For a graph $G=(V,E)$ and $\lambda=(\lambda_v)_{v\in V}\in \C^V$, the \emph{multivariate independence polynomial}, is defined as
\[
Z_G(\lambda):=\sum_{\substack{I\subseteq V\\\text{independent}}} \prod_{v\in I}\lambda_v.
\]
We recall that a set $I\subseteq V$ is called \emph{independent} if it does not span any edges of $G$.
The \emph{univariate independence polynomial}, which we also denote by $Z_G(\lambda)$, is obtained from the multivariate independence polynomial by plugging in $\lambda_v=\lambda$ for all $v\in V$.

In statistical physics the univariate independence polynomial is known as the partition function of the hardcore model. When $\lambda=1$, $Z_G(\lambda)$ equals the number of independent sets in the graph $G$.

Motivated by applications in statistical physics Sokal \cite[Question 2.4]{S1} asked about domains of the complex plane where the independence polynomial does not vanish.
Just below Question 2.4 in \cite{S1}, Sokal conjectures: \emph{``there is a complex domain $D_\Delta$ containing at least the interval $0\leq \lambda<1/(\Delta-1)$ of the real axis --- and possibly even the interval $0\leq \lambda<\lambda_\Delta:=\frac{(\Delta-1)^{\Delta-1}}{(\Delta-2)^\Delta}$ --- on which $Z_G(\lambda)$ does not vanish for all graphs of maximum degree at most $\Delta$"}.

In this paper we confirm the strong form of his conjecture for the univariate independence polynomial.
In Section~\ref{sec:proof} we will prove the following result:
\begin{theorem}\label{thm:main}
Let $\Delta \in \mathbb N $ with $\Delta \geq 3$. Then there exists a complex domain $D_\Delta$ containing the interval $0\leq \lambda<\lambda_\Delta$ such that for any graph $G=(V,E)$ of maximum degree at most $\Delta$ and any $\lambda \in D_\Delta$, we have that $Z_G( \lambda)\neq 0$.
\end{theorem}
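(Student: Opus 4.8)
The plan is to reduce the problem to a statement about the dynamics of a one-parameter family of M\"obius-like maps. First I would exploit the standard recursion for the independence polynomial: if $G$ is a graph with a vertex $v$ of degree $\le\Delta$ and neighbours $u_1,\dots,u_d$, then the \emph{occupation ratio} $R_G:=\lambda Z_{G-N[v]}/Z_{G-v}$ satisfies
\[
R_G \;=\; \lambda\prod_{i=1}^{d} \frac{1}{1+R_{G_i}},
\]
where each $G_i$ is a suitable smaller graph (the branches hanging off $u_i$) of maximum degree at most $\Delta$, and $Z_G = Z_{G-v}\,(1+R_G)$. Thus $Z_G(\lambda)\neq 0$ will follow if one can show, by induction on $|V|$, that $R_G$ stays in a region $U\subseteq\C$ with $-1\notin\overline{U}$ that is ``forward invariant'' in the appropriate sense: whenever $R_1,\dots,R_d\in U$ with $d\le\Delta-1$ (since $v$ has degree at most $\Delta$, but one neighbour is ``used up'' in the worst case we may take $d\le\Delta$; the critical count is $d=\Delta-1$ after rooting at an edge), the value $\lambda\prod(1+R_i)^{-1}$ again lies in $U$, and moreover $1+R\neq0$ on $U$.

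The heart of the matter is therefore to produce, for every $\lambda$ in a neighbourhood $D_\Delta$ of $[0,\lambda_\Delta)$, such an invariant region $U=U_\lambda$. Here is where complex dynamics enters. Writing $b:=\Delta-1$, the relevant map is $f_\lambda(z)=\lambda(1+z)^{-b}$, whose fixed points and whose critical behaviour govern everything. For real $\lambda\in[0,\lambda_\Delta)$ the map $f_\lambda$ has an attracting (or neutral, at the endpoint) fixed point, and $\lambda_\Delta$ is exactly the value where this fixed point becomes neutral with multiplier $-1$ along the real axis — this is the classical reason $\lambda_\Delta=b^b/(b-1)^{b+1}$ is the threshold for the uniqueness regime of the hardcore model on the $\Delta$-regular tree. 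The plan is to take $U_\lambda$ to be (a slight thickening of) an invariant disk or cardioid-like region around this attracting fixed point. One must check: (i) $-1\notin\overline{U_\lambda}$, so that division is safe; (ii) $f_\lambda(U_\lambda)\subseteq U_\lambda$ and, more importantly, the $\Delta$-fold product map $(z_1,\dots,z_d)\mapsto \lambda\prod(1+z_i)^{-1}$ sends $U_\lambda^{\,d}$ into $U_\lambda$ for all $d\le\Delta$; and (iii) this persists for $\lambda$ in an open complex neighbourhood of $[0,\lambda_\Delta)$. Step (ii) is subtle because the product of several factors $1+z_i$ can be much larger or much smaller in modulus than a single one, so the naive iteration estimate does not immediately give a product estimate; the trick is to find $U_\lambda$ that is also invariant under the auxiliary map $z\mapsto (1+z)^{-1}$ composed with multiplication — equivalently, to work with the ``potential'' coordinate $w=\log(1+z)$ and seek a convex or otherwise product-stable region there.

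Concretely I would carry out the following steps. First, establish the recursion and the reduction above precisely, including the base case (graphs with no edges, where $Z_G=\prod(1+\lambda)\neq0$ for $\lambda\neq-1$) and the handling of disconnected graphs and of vertices of degree $<\Delta$. Second, for real parameters analyse $f_\lambda$ on $[0,\lambda_\Delta)$: locate the attracting fixed point $z^*(\lambda)>0$, compute its multiplier, and verify it lies in $(-1,1]$ precisely on this interval with value $-1$ at $\lambda_\Delta$. Third — the main construction — for each such $\lambda$ exhibit an explicit region $U_\lambda\ni z^*$ that is invariant under all the product maps of arity $\le\Delta$ and avoids $-1$; a natural candidate is a disk in the $w=\log(1+z)$ coordinate, or a sector/cardioid chosen using the geometry of the multiplier, and one verifies product-stability by a modulus-and-argument estimate. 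Fourth, upgrade to complex $\lambda$: since all the defining inequalities are strict and depend continuously (indeed analytically) on $\lambda$, an open neighbourhood $D_\Delta$ of $[0,\lambda_\Delta)$ works, with $U_\lambda$ varying continuously. Fifth, run the induction on $|V|$ to conclude $R_G\in U_\lambda$ and hence $Z_G(\lambda)\neq0$. I expect the main obstacle to be step three: pinning down a region that is simultaneously invariant under the whole family of product maps (not merely under a single iteration of $f_\lambda$) while staying uniformly away from $-1$, and doing so in a way robust enough to survive the passage to complex $\lambda$ up to but not including the boundary point $\lambda_\Delta$ — this is precisely the point where the sharpness of Sokal's bound is encoded, so the argument cannot afford any slack there.
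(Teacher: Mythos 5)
Your outline reproduces the paper's architecture faithfully: the ratio recursion and induction over subgraphs, an invariant region avoiding $-1$, reduction of the several-variable product map to the one-variable map $f_{d,\lambda}$ via convexity in a logarithmic coordinate (in the paper this is exactly the convexity of $\exp(D)$ in Lemma 4.1), a continuity argument to pass from real $\lambda\in[0,(1-\eps)\lambda_\Delta]$ to a complex neighbourhood, and a union over $\eps\to 0$ to get the domain $D_\Delta$. The handling of the root vertex (arity up to $\Delta$ rather than $\Delta-1$) needs, and in the paper gets, a separate short argument at the end of the induction, but that is a minor point.

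The genuine gap is the step you yourself flag as the main obstacle: you never produce coordinates (equivalently, a region) in which \emph{all} the maps $f_{d,\lambda}$, $0\le d\le\Delta-1$, $0\le\lambda\le(1-\eps)\lambda_\Delta$, are uniform contractions, and the concrete candidate you float --- a disk in the single-log coordinate $w=\log(1+z)$ --- does not do the job. Indeed, conjugating $f_{\Delta-1,\lambda_\Delta}$ by $\psi(x)=\log(1+x)$ gives $h(w)=\log\bigl(1+\lambda_\Delta e^{-(\Delta-1)w}\bigr)$, whose derivative is $-\,(\Delta-1)x_1/(1+x_1)$ with $x_1=f(x_0)$; at the parabolic fixed point $x_\Delta=1/(\Delta-2)$ one finds $h'=-1$ but $h''=(\Delta-1)^2x_\Delta/(1+x_\Delta)^2=\Delta-2\neq 0$, so $|h'|>1$ on one side of the fixed point and the contraction-based invariance of a thin neighbourhood breaks down precisely in the limit $\lambda\to\lambda_\Delta$, where there is no slack to absorb the expansion. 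The missing idea, which is the heart of the paper, is the double-logarithmic change of variables $\varphi_y(x)=\log(1+y\log(1+x))$ with the \emph{unique} choice $y=y_\Delta=1/(2x_\Delta-\log(1+x_\Delta))$ that kills the second derivative of the conjugated parabolic map at its fixed point, so that $|g'|\le 1$ on all of $[0,\infty)$; on top of that one still needs nontrivial monotonicity arguments in $d$ and in $\lambda$ (with a separate treatment of $\Delta\in\{3,4\}$) to get the uniform bound $|g'_{d,\lambda}|\le 1-\delta$ that feeds the invariance lemma and the induction. Without this construction, or an equivalent one, your plan stops short of a proof exactly where the sharpness of $\lambda_\Delta$ is encoded.
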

If we allow ourselves an epsilon bit of room, then the same result also holds for multivariate independence polynomial. This is the contents of Theorem \ref{thm:precise} in Section~\ref{sec:proof}. We show in Appendix~\ref{app:complex dynamics} that the literal statement of Theorem~\ref{thm:main} does not hold in the multivariate setting.

It follows from nontrivial results in complex dynamical systems that the bound in Theorem~\ref{thm:main} is in fact optimal, in light of the following:
\begin{proposition}\label{prop:tight}
Let $\Delta \in \mathbb N$ with $\Delta \geq 3$. Then there exist $\lambda \in \mathbb C$ arbitrarily close to $\lambda_\Delta$ for which there exists a graph $G$ of maximum degree $\Delta$ with $Z_G(\lambda)=0$.
\end{proposition}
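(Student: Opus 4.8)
The plan is to exhibit the extremal graphs via iterated rooted trees and to read off non-vanishing roots from the dynamics of the associated ratio map. For a rooted graph $(G,v)$ one tracks the quantity $R_{G,v}(\lambda):=Z_{G-v}(\lambda)/Z_G(\lambda)$ (equivalently the ratio of the partition functions with $v$ excluded versus not constrained). The key combinatorial fact is the recursion for the ratio on a tree: if $v$ is the root of a tree whose children subtrees, hung below $v$, are rooted at $u_1,\dots,u_d$, then
\[
R_{G,v}(\lambda)=\frac{1}{1+\lambda\prod_{i=1}^d R_{G_i,u_i}(\lambda)}.
\]
Specializing to the $\Delta$-regular tree, where at every non-leaf vertex there are exactly $d=\Delta-1$ children, the ratios are governed by iteration of the one-variable map $f_\lambda(z)=\tfrac{1}{1+\lambda z^{\Delta-1}}$; here $Z_G(\lambda)=0$ for a finite rooted tree precisely when some iterate hits the pole, i.e.\ when $1+\lambda\prod R_{G_i,u_i}(\lambda)=0$ at the root. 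So the first step is to record this recursion and the equivalence ``$Z$ vanishes on some finite tree of max degree $\Delta$'' $\iff$ ``$0$ (or $\infty$) lies in the forward orbit of a suitable starting value under finitely many applications of $f_\lambda$''.

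The second step is the dynamical input. The value $\lambda_\Delta=\tfrac{(\Delta-1)^{\Delta-1}}{(\Delta-2)^\Delta}$ is exactly the parameter at which the map $f_\lambda$ (or rather its relevant iterate/conjugate, the degree-$(\Delta-1)$ map $z\mapsto \lambda z^{\Delta-1}$ composed with a Möbius map) undergoes a change in the nature of its fixed points: for $\lambda$ in $[0,\lambda_\Delta)$ there is an attracting fixed point and the orbit structure is tame, while at $\lambda_\Delta$ a parabolic (neutral) fixed point appears and for $\lambda$ just beyond it the fixed point becomes repelling. Standard complex-dynamics facts — density of the Julia set of repelling periodic points, or the characterization of the Julia set as the closure of the set of points whose orbit meets a small neighborhood of the pole, plus the fact that for a rational map of degree $\ge 2$ the backward orbit of any non-exceptional point is dense in the Julia set — give that for $\lambda$ near $\lambda_\Delta$ on the appropriate side, the point $0$ (or its preimages) has a forward orbit that, after finitely many steps, can be made to land (or land arbitrarily close and then, by an open-mapping/perturbation argument in $\lambda$, land exactly) on the pole. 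Translating back through Step~1, such a $\lambda$ is a root of $Z_G$ for some finite tree $G$ of maximum degree $\Delta$, and these $\lambda$ accumulate at $\lambda_\Delta$.

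The main obstacle — and the reason the statement is phrased with ``arbitrarily close'' rather than ``equal to $\lambda_\Delta$'' — is the passage from ``orbit comes arbitrarily close to the pole'' (which is soft, following from Julia-set density statements) to ``orbit lands exactly on the pole for some genuine finite tree and some genuine $\lambda$.'' One clean way to close this gap is to fix a combinatorial type of tree of depth $n$, obtaining a rational function $\lambda\mapsto Z_{G_n}(\lambda)$, and to show that its zero set gets close to $\lambda_\Delta$ as $n\to\infty$; this is a normal-families/Hurwitz argument: near $\lambda_\Delta$ the renormalized iterates $f_\lambda^{\circ n}$ fail to be a normal family (because $\lambda_\Delta$ is on the boundary of the set of ``nice'' parameters, which one should identify with a bifurcation locus), so by Montel's theorem the iterates must hit $0,1,\infty$, forcing actual zeros of $Z_{G_n}$ near $\lambda_\Delta$. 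Making the choice of tree uniform and checking the relevant fixed-point multiplier computation at $\lambda_\Delta$ (namely that $|f_\lambda'|$ at the fixed point equals $1$ precisely when $\lambda=\lambda_\Delta$) are the two calculations I would actually carry out in detail; everything else is citation of standard results on Julia sets and on non-normality at bifurcation parameters.
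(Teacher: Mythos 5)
Your combinatorial reduction (the ratio recursion on the trees $T_{\Delta,k}$, and the equivalence between a zero of $Z$ on a finite tree and the orbit of the fixed starting value hitting the pole of the ratio map) is the same as the paper's and is fine. The gap is in the dynamical step. Your first route --- ``for $\lambda$ just beyond $\lambda_\Delta$ the fixed point is repelling, so density of the Julia set / of backward orbits of the pole lets the orbit land (after a perturbation in $\lambda$) on the pole'' --- does not work as stated: what matters is the forward orbit of the one specific starting point (the critical value, i.e.\ the orbit $0\mapsto\lambda\mapsto\cdots$), not of an arbitrary point of the Julia set. For real $\lambda>\lambda_\Delta$ that orbit converges to an attracting $2$-cycle and never meets the pole --- indeed $Z_G(\lambda)>0$ for every positive real $\lambda$ --- so being on the ``repelling side'' of $\lambda_\Delta$ gives nothing, and density of preimages of the pole in the Julia set does not transfer to the critical orbit without a real argument. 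This is exactly the point where the paper has to invoke parabolic implosion: Lavaurs' theorem about perturbing the parabolic parameter $\lambda_\Delta\in\partial U_{\Delta-1}$ produces complex $\lambda_j\to\lambda_\Delta$ and $N_j\to\infty$ with $f_{\Delta-1,\lambda_j}^{\circ N_j}$ of the starting point equal to any prescribed non-exceptional value, in particular $-1$.

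Your second route (non-normality at $\lambda_\Delta$ plus Montel) is genuinely different from the paper's argument and could in principle be completed, but as written it asserts precisely what needs proof: that the family $\lambda\mapsto f_{\Delta-1,\lambda}^{\circ n}(\lambda)$ fails to be normal at $\lambda_\Delta$, i.e.\ that this non-persistent parabolic parameter is an activity/bifurcation parameter. That is a theorem of Ma\~n\'e--Sad--Sullivan/Lyubich type, of comparable depth to the implosion argument the paper uses; one cannot simply ``identify'' the boundary of the attracting-fixed-point region with the bifurcation locus without an argument. In addition, Montel only tells you that in every neighborhood of $\lambda_\Delta$ some iterate hits one of three values of your choosing, so the choice matters: with your triple $\{0,1,\infty\}$, hitting $1$ gives $\lambda Z_{T\setminus N[v]}=Z_{T-v}$, which is not a zero of any independence polynomial. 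The right choice is $\{0,-1,\infty\}$ (or three holomorphically moving preimages of $-1$): after discarding the finitely many constant initial terms of the orbit ($-1,\infty,0,\lambda$), the value $-1$ at parameter $\lambda^*$ gives $Z_{T_{\Delta,k}}(\lambda^*)=0$, the value $\infty$ gives $Z_{T_{\Delta,k-1}}(\lambda^*)=0$, and the value $0$ gives $Z_{T_{\Delta,k-2}}(\lambda^*)=0$. With the activity statement proved and the target triple corrected, your Montel argument would yield an alternative proof of the proposition; as it stands, the key dynamical input is assumed rather than established.
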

This result is a direct consequence of Proposition~\ref{prop:regular trees} in Subsection~\ref{sec:regular trees}.
We discuss the underlying results from the theory of complex dynamical systems in Appendix~\ref{app:complex dynamics}.

Other results for the nonvanishing of the independence polynomial include a result of Shearer \cite{Sh85} that says that for any graph $G=(V,E)$ of maximum degree at most $\Delta$ and any $\lambda$ such that for each $v\in V$, $|\lambda_v|\leq \frac{(\Delta-1)^{\Delta-1}}{\Delta^\Delta}$ one has $Z_G(\lambda)\neq 0$. See \cite{SS05} for a slight improvement and extensions.
Moreover, Chudnovsky and Seymour \cite{CS7} proved that the univariate independence polynomial of a claw-free graph (a graph $G$ is called \emph{claw-free} if it does not contain four vertices that induce a tree with three leaves), has all its roots on the negative real axis.

\subsubsection*{Motivation}
Another motivation for Theorem~\ref{thm:main} comes from the design of efficient approximation algorithms for (combinatorial) partition functions.
In \cite{W6} Weitz showed that there is a (deterministic) fully polynomial time approximation algorithm (FPTAS) for computing $Z_G(\lambda)$ for any $0\leq \lambda<\lambda_c(\Delta)$ for any graph of maximum degree at most $\Delta$. His method is often called the \emph{correlation decay method} and has subsequently been used and modified to design many other FPTAS's for several other types of partition functions; see e.g. \cite{BGKNT7,GK12,LY13,LL15}.
More recently, Barvinok initiated a line of research that led to quasi-polynomial time approximation algorithms for several types of partition functions and graph polynomials; see e.g. \cite{B14a,B14b,BS14a,BS14b,R15} and Barvinok's recent book \cite{B17}.
This approach is based on Taylor approximations of the log of the partition function/graph polynomial, and allows to give good approximations in regions of the complex plane where the partition function/polynomial does not vanish.
In his recent book \cite{B17}, Barvinok refers to this approach as the \emph{interpolation method}.
Patel and the second author \cite{PR16} recently showed that the interpolation method in fact yields polynomial time approximation algorithms for these partition functions/graph polynomials when restricted to bounded degree graphs.

In combination with the results in Section 4.2 from \cite{PR16}, Theorem~\ref{thm:main} immediately implies that the interpolation methods yields a polynomial time approximation algorithm for computing the independence polynomial at any fixed $0\leq \lambda<\lambda_\Delta$ on graphs of maximum degree at most $\Delta$, thereby matching Weitz's result.
In particular, Theorem~\ref{thm:main} gives evidence for the usefulness of the interpolation method.

\subsection*{Preliminaries}
We collect some preliminaries and notational conventions here.
Graphs may be assumed to be simple, as vertices with loops attached to them can be removed from the graph and parallel edges can be replaced by single edges without affecting the independence polynomial.
Let $G=(V,E)$ be a graph. For a subset $U\subseteq V$ we denote the graph induced by $U$ by $G[U]$.
For $U\subset V$ we denote the graph induced by $V\setminus U$ by $G\setminus U$; in case $U=\{u\}$ we just write $G-u$.
For a vertex $v\in V$ we denote by $N[v]:=\{u\in V\mid \{u,v\}\in E\}\cup \{v\}$ the \emph{closed neighborhood} of $v$.
The \emph{maximum degree} of $G$ is the maximum number of neighbors of a vertex over all vertices of $G$. This is denoted by $\Delta(G)$.

For $\Delta\in \N$ and $k\in \N$ we denote by $T_{\Delta,k}$ the rooted tree, recursively defined as follows: for $k=0$, $T_{\Delta,0}$ consists of a single vertex; for $k>0$, $T_{\Delta,k}$ consists of the root vertex, which is connected to the $\Delta-1$ root vertices of $\Delta-1$ disjoint copies of $T_{\Delta,k-1}$.
We will sometimes, abusing terminology, refer to the $T_{\Delta,k}$ as \emph{regular trees}.
Note that the maximum degree of $T_{\Delta,k}$ equals $\Delta$ whenever $k\geq 2$ and equals $\Delta-1$ when $k=1$.
\\

\noindent{\bf Organization}
The remainder of this paper is organised as follows. In the next section we translate the setting to the language of complex dynamical systems and we prove another non-vanishing result for the multivariate independence polynomial, cf. Theorem~\ref{thm:main1}.
Section~\ref{sec:coordinate} contains technical, yet elementary, derivations needed for the proof of our main result, which is given in Section~\ref{sec:proof}. We conclude with some questions in Section~\ref{sec:conclusion}.
In the appendix we discuss results from complex dynamical systems theory needed to prove Proposition~\ref{prop:tight}.

\section{Setup}
We will introduce our setup in this section.

Let us fix a graph $G=(V,E)$, $\lambda=(\lambda_v)_{v\in V}\in \C^V$ and a vertex $v_0\in V$.
The fundamental recurrence relation for the independence polynomial is
\begin{equation}
Z_G(\lambda)=\lambda_{v_0} Z_{G\setminus N[v_0]}(\lambda)+Z_{G-v_0}(\lambda). \label{eq:fund recurrence}
\end{equation}
Let us define, assuming $Z_{G-v_0}(\lambda)\neq 0$,
\begin{equation}
R_{G,v_0}=\frac{\lambda_{v_0} Z_{G\setminus N[v_0]}(\lambda)}{Z_{G-v_0}(\lambda)}.\label{eq:def R}
\end{equation}
\change{In the case that $\lambda_v>0$ for all $v\in V$} \eqref{eq:def R} is always defined.
This definition is inspired by Weitz \cite{W6}.
We note that by \eqref{eq:fund recurrence},
\begin{equation}\label{eq:no zero not -1}
R_{G,v_0}\neq -1 \text{ if and only if } Z_G(\lambda)\neq 0.
\end{equation}
So for our purposes it suffices to look at the ratio $R_{G,v_0}$.

\subsection{Regular trees}\label{sec:regular trees}
We now consider the univariate independence polynomial for the trees $T_{\Delta,k}$.
Let $v_k$ denote the root vertex of $T_{\Delta,k}$.
Then for $k>0$, $T_{\Delta,k}-v_k$ is equal to the disjoint union of $\Delta-1$ copies of $T_{\Delta,k-1}$.
Additionally, for $k>1$, $T_{\Delta,k}\setminus N[v_k]$ is equal to the disjoint union of $\Delta-1$ copies of $T_{\Delta,k-1}-v_{k-1}$.
Using this we note that for $k>2$ \eqref{eq:def R} takes the following form:
\begin{align}\label{eq:rec tree}
R_{T_{\Delta,k},v_k}&=\lambda\left(\frac{Z_{T_{\Delta,k-1}-v_{k-1}}}{Z_{T_{\Delta,k-1}}}\right)^{\Delta-1}
=\lambda\left(\frac{Z_{T_{\Delta,k-1}-v_{k-1}}}{\lambda Z_{T_{\Delta,k-1}\setminus N[v_{k-1}]}+Z_{T_{\Delta,k-1}-v_{k-1}}}\right)^{\Delta-1}\nonumber
\\
&=\frac{\lambda}{(1+R_{T_{\Delta,k-1},v_{k-1}})^{\Delta-1}}.
\end{align}

We denote the extended complex plane $\C\cup \{\infty\}$ by $\widehat \C$.
Define for $\lambda\in \C$ and $d\in \N$, $f_{d,\lambda}:\widehat \C \to \widehat \C$ by
\[
f_{d,\lambda}(x)=\frac{\lambda}{(1+x)^{d}}.
\]
So \eqref{eq:rec tree}  gives that $R_{T_{\Delta,k},v_k}=f_{\Delta-1,\lambda}(R_{T_{\Delta,k-1},v_{k-1}})$.
Noting that $R_{T_{\Delta,v_0}}=\lambda$, we observe that $R_{T_{\Delta,k},v_0}=f_{\Delta-1,\lambda}^{\circ k}(\lambda)$.
\change{So to understand under which conditions $R_{T_{\Delta,k},v_k}$ equals $-1$ or not, it suffices to look at the orbits of $f_{\Delta-1,\lambda}$ with starting point $\lambda$, or equivalently with starting point $-1$.}

A somewhat similar relation between graphs and the iteration of rational maps was explored by Bleher, Roeder and Lyubich in \cite{BLR2010} and \cite{BLR2011}. While here one iteration of $f_{\Delta-1,\lambda}$ corresponds to adding an additional level to a tree, there one iteration corresponded to adding an additional refinement to a hierarchical lattice.

Let us denote by $U_{d}\subset \C$ the open set of parameters $\lambda$ for which $f_{d,\lambda}$ has an attracting fixed point.
Then
\begin{equation}\label{eq:parametrise U_d}
U_d=\left\{\frac{-\alpha d^d}{(d+\alpha)^{d+1}}\mid |\alpha|<1\right\}.
\end{equation}
Indeed, writing $f=f_{d,\lambda}$, we note that if $x$ is a fixed point of $f$ we have
\[
f^\prime(x)=\frac{-d}{1+x}\frac{\lambda}{(1+x)^d}=\frac{-dx}{1+x}.
\]
Let $\alpha\in \C$.
Then $f^\prime(x)=\alpha$ if and only if $x=\frac{-\alpha}{d+\alpha}$ and consequently,
\[
\lambda=x(1+x)^d=\frac{-\alpha d^d}{(d+\alpha)^{d+1}}.
\]
A fixed point $x = f(x)$ is attracting if and only if $|f^\prime(x)|<1$, which implies the description \eqref{eq:parametrise U_d}. For parameters $\lambda$ in the boundary $\partial U_{\Delta-1}$ the function $f$ has a neutral fixed point, and for a dense set of parameters $\lambda \in \partial U_{\Delta - 1}$ the fixed point is \emph{parabolic}, i.e. the derivative at the fixed point is a root of unity. Classical results from complex dynamical systems allow us to deduce the following regarding the vanishing/non-vanishing of the independence polynomial:
\begin{proposition}\label{prop:regular trees}
Let $\Delta\in \N$ be such that $\Delta\geq 3$. Then
\begin{itemize}
\item[(i)] for all $k\in \N$ and $\lambda\in U_{\Delta-1}$, $Z_{T_{\Delta,k}}(\lambda)\neq 0$;
\item[(ii)] if $\lambda\in \partial U_{\Delta-1}$, then for any open neighborhood $U$ of $\lambda$ there exists $\lambda'\in U$ and $k\in \N$ such that $Z_{T_{\Delta,k}}(\lambda')=0$.
\end{itemize}
\end{proposition}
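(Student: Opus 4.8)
The plan is to exploit the identification $R_{T_{\Delta,k},v_k}=f_{\Delta-1,\lambda}^{\circ k}(\lambda)$ together with \eqref{eq:no zero not -1}, which tells us that $Z_{T_{\Delta,k}}(\lambda)=0$ precisely when $f_{\Delta-1,\lambda}^{\circ k}(\lambda)=-1$. Write $f=f_{\Delta-1,\lambda}$ throughout and note that $f(-1)=\infty$ and $f(\infty)=0$, so the orbit of $-1$ and the orbit of $\lambda=f^{\circ 2}(-1)$ differ only by a shift; hence $Z_{T_{\Delta,k}}(\lambda)=0$ for some $k$ if and only if the forward orbit of $-1$ hits $-1$ again, i.e.\ $-1$ is a (pre)periodic point of $f$ landing on the cycle through $-1$. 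For part (i), suppose $\lambda\in U_{\Delta-1}$, so $f$ has an attracting fixed point $x^*$ with $|f'(x^*)|<1$. By the classical theory of Fatou components (Fatou--Julia), the immediate basin of an attracting fixed point of a rational map must contain a critical point. For $f_{d,\lambda}$ with $\lambda\neq 0$ the only critical points in $\widehat{\C}$ are $x=-1$ (where $f$ has a pole of order $d$, so $-1$ is a critical point of the $d$-fold branched map in the appropriate sense) and $x=\infty$; one checks $f(\infty)=0$ and $f(0)=\lambda$, so up to two iterations the ``free'' critical orbit is the orbit of $-1$. Thus the orbit of $-1$ converges to $x^*$. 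Since convergence to an attracting fixed point is a genuine limit (the orbit is eventually confined to a small disk around $x^*$ on which $f$ is a contraction), the orbit of $-1$ can equal $-1$ only if $-1=x^*$; but then $f'(-1)$ would have to be defined and have modulus $<1$, whereas $-1$ is a pole of $f$, a contradiction. Hence $f^{\circ k}(\lambda)\neq -1$ for all $k$, giving $Z_{T_{\Delta,k}}(\lambda)\neq 0$.

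For part (ii), fix $\lambda_0\in\partial U_{\Delta-1}$. Using the parametrisation \eqref{eq:parametrise U_d}, boundary parameters correspond to $|\alpha|=1$, i.e.\ $f$ has a neutral fixed point with multiplier $\alpha$ on the unit circle; as noted in the text, the parameters with $\alpha$ a root of unity (parabolic case) are dense in $\partial U_{\Delta-1}$. The key dynamical input is that at a parabolic parameter the free critical orbit — here the orbit of $-1$ — is attracted to the parabolic cycle (again by the Fatou-component classification: every cycle of parabolic basins attracts a critical point, and $-1$ is the only available one). Now I would deform: near a parabolic parameter $\lambda_*$, a parabolic fixed point perturbs so that it either becomes attracting (landing us in $U_{\Delta-1}$) or splits, and in nearby parameters one can arrange that the critical value $f_{\Delta-1,\lambda}(-1)$ — equivalently, after shifting, the point $\lambda$ itself, which is $f_{\Delta-1,\lambda}^{\circ 2}(-1)$ — lands exactly on $-1$ after finitely many steps. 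Concretely, consider the family of analytic functions $\lambda\mapsto f_{\Delta-1,\lambda}^{\circ k}(\lambda)+1$; we want to show that for $k$ large these functions have zeros arbitrarily close to $\lambda_0$. One shows this cannot fail everywhere on a neighborhood $U$ of $\lambda_0$: if $f_{\Delta-1,\lambda}^{\circ k}(\lambda)\neq -1$ for all $k$ and all $\lambda\in U$, then $\{\lambda\mapsto f_{\Delta-1,\lambda}^{\circ k}(\lambda)\}_k$ omits the three values $\{-1,\infty,0\}$ on $U$ (using that $f(-1)=\infty$, $f(\infty)=0$), so by Montel's theorem this family is normal on $U$. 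Normality at $\lambda_0$ contradicts the fact that $\lambda_0$ lies in the \emph{bifurcation locus} of the family: at a parabolic parameter the critical orbit is not stable — an arbitrarily small perturbation turns the parabolic point attracting, changing the limiting behaviour of $f^{\circ k}(\lambda)$ discontinuously — so no subsequence can converge uniformly near $\lambda_0$. Hence some $f_{\Delta-1,\lambda'}^{\circ k}(\lambda')=-1$ with $\lambda'\in U$, i.e.\ $Z_{T_{\Delta,k}}(\lambda')=0$.

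The main obstacle is part (ii): one has to make precise that parabolic parameters are genuinely in the bifurcation (non-normality) locus and that non-normality of the critical-orbit family forces the omitted value $-1$ to actually be attained nearby, rather than merely approached. The clean way to package this is: (a) identify $\partial U_{\Delta-1}$ with the boundary of the hyperbolic component and invoke that boundaries of hyperbolic components lie in the bifurcation locus; (b) invoke the standard fact (Montel) that the bifurcation locus equals the non-normality locus of the critical-orbit map; and (c) apply the stronger Montel three-point theorem so that non-normality of $\lambda\mapsto f^{\circ k}(\lambda)$ at $\lambda_0$ directly yields a solution of $f^{\circ k}(\lambda')=-1$ near $\lambda_0$ for some $k$. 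I would carry out step (i) in full (it is elementary given the Fatou-component fact), then state the three ingredients of step (ii) as consequences of the classical theory, citing a standard reference (e.g.\ Milnor's book), with the parabolic-density remark from the text bridging the gap between a general boundary point and a parabolic one.
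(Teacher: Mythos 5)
Part (i) of your proposal is essentially the paper's own argument: the immediate basin of the attracting fixed point must contain a critical point, the two critical orbits of $f_{\Delta-1,\lambda}$ merge (note the small slip: $\lambda=f^{\circ 3}(-1)=f^{\circ 2}(\infty)$, not $f^{\circ 2}(-1)$), and a periodic orbit through $-1$ cannot converge to the attracting fixed point. Part (ii), however, takes a genuinely different route. The paper proves the statement constructively via parabolic implosion: at a parabolic parameter the critical value lies in the parabolic basin, Lavaurs' theorem gives convergence of high iterates of nearby maps to the Lavaurs map $\psi^o\circ T_\alpha\circ\phi^i$, and a winding argument produces parameters $\lambda'$ with $f_{\Delta-1,\lambda'}^{\circ N}(\lambda')=-1$ exactly. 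You instead propose a soft normal-families argument: non-normality of the family $\lambda\mapsto f_{\Delta-1,\lambda}^{\circ k}(\lambda)$ at boundary parameters, combined with Montel's theorem applied to the three dynamically related values $-1,\infty,0$. The Montel step itself is sound (a hit of $\infty$ or $0$ at time $k$ forces a hit of $-1$ at time $k-1$ or $k-2$, apart from the trivial case of parameters near $-1$, where already $Z_{T_{\Delta,0}}$ vanishes), and this route is shorter and avoids Fatou coordinates, at the price of being non-constructive.

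The genuine gap is the justification of the crucial non-normality claim. Your inline argument --- ``the limiting behaviour changes discontinuously, so no subsequence can converge uniformly'' --- does not rule out normality: normality only requires subsequential locally uniform limits, and any such limit would agree with the attracting fixed point $x^*(\lambda)$ on $U\cap U_{\Delta-1}$ and hence, by the identity theorem, with its holomorphic continuation across $\partial U_{\Delta-1}$ (which exists at boundary points whose multiplier is not $1$); since at the parabolic parameter itself the critical orbit really does converge to the neutral fixed point, no discontinuity contradiction appears on that side. The contradiction has to be extracted from the exterior side: normality would force $f_{\Delta-1,\lambda}^{\circ k}(\lambda)\to x^*(\lambda)$ at parameters where $x^*(\lambda)$ is repelling, and an orbit can converge to a repelling fixed point only by landing on it exactly, which one then excludes by a countability/identity-theorem argument (the landing cannot hold identically in $\lambda$, as it fails for small real $\lambda>0$). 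Alternatively one can, as you suggest in your packaging, quote that boundaries of hyperbolic components lie in the bifurcation locus and that the bifurcation locus coincides with the activity locus of the critical orbits; but these are Ma\~n\'e--Sad--Sullivan/McMullen-type theorems not found in Milnor's book, so they must either be proved along the lines above or cited from an appropriate source. As written, the decisive step of part (ii) is asserted rather than proved, though the overall strategy can be completed.
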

We note that for $\lambda=\frac{-(\Delta-1)^{\Delta-1}}{\Delta^{\Delta}}$ part (ii) was proved by Shearer \cite{Sh85}; see also \cite{SS05}.
Part (i) follows quickly from elementary results in complex dynamics, but the statements that imply part (ii) are less trivial. The necessary background from the complex dynamical systems, including the proof of Proposition \ref{prop:regular trees} and a counterexample to the multivariate statement of Theorem~\ref{thm:main}, will be discussed in Appendix~\ref{app:complex dynamics}. Note that Proposition~\ref{prop:tight} from the introduction is a special case of Proposition~\ref{prop:regular trees}.

So we can conclude that Sokal's conjecture is already proved for regular trees.
We now move to general (bounded degree) graphs.

\subsection{A recursive procedure for ratios for all graphs}
It will be convenient to have an expression similar to~\eqref{eq:rec tree} for all graphs.
Let $G$ be a graph with fixed vertex $v_0$.
Let $v_1,\ldots,v_d$ be the neighbors of $v_0$ in $G$ (in any order). Set $G_0=G-v_0$ and define for $i=1,\ldots,d$, $G_i:=G_{i-1}-v_{i}$.
Then $G_{d}=G\setminus N[v_0]$.
The following lemma gives recursive relation for the ratios and has been used before over the real numbers in e.g. \cite{LL15}.
\begin{lemma}\label{lem:weitz}
Suppose $Z_{G_i}(\lambda)\neq 0$ for all $i=0,\ldots,d$.
Then
\begin{equation}
R_{G,v_0}=\frac{\lambda_{v_0}}{\prod_{i=1}^d (1+R_{G_{i-1},v_i})}.
\end{equation}
\end{lemma}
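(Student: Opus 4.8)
The plan is to prove Lemma~\ref{lem:weitz} by induction on $d$, the number of neighbors of $v_0$, peeling off one neighbor at a time. The base case $d=0$ is immediate: then $N[v_0]=\{v_0\}$, so $G\setminus N[v_0] = G-v_0 = G_0$, and \eqref{eq:def R} reads $R_{G,v_0} = \lambda_{v_0} Z_{G_0}(\lambda)/Z_{G_0}(\lambda) = \lambda_{v_0}$, matching the empty product on the right-hand side. The content is in the inductive step, and here the natural tool is the fundamental recurrence \eqref{eq:fund recurrence} applied not at $v_0$ but at one of its neighbors, say $v_1$, inside the graph $G_0 = G - v_0$.

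First I would note the key identity: applying \eqref{eq:fund recurrence} to the graph $G_0$ at the vertex $v_1$ gives
\begin{equation}
Z_{G_0}(\lambda) = \lambda_{v_1} Z_{G_0 \setminus N[v_1]}(\lambda) + Z_{G_0 - v_1}(\lambda) = Z_{G_1}(\lambda)\bigl(1 + R_{G_0,v_1}\bigr),
\end{equation}
where $G_1 = G_0 - v_1$ and we used the definition \eqref{eq:def R} of $R_{G_0,v_1}$; this is legitimate since $Z_{G_1}(\lambda)\neq 0$ by hypothesis. Now observe that because $v_1$ is a neighbor of $v_0$, we have $v_1 \in N[v_0]$, so $G\setminus N[v_0] = (G-v_0)\setminus N[v_0] $ does not contain $v_1$; more precisely, removing $v_0$ and then the remaining neighbors $v_2,\ldots,v_d$ from $G_1$ yields exactly $G\setminus N[v_0] = G_d$. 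In other words, $G_d$ equals $(G_1)\setminus N_{G_1}[\text{nothing involving }v_0]$... the cleanest way to say it: the graph $G$ with $v_0$ deleted, call it $G_0$, has $v_1$ as a vertex, and $\{v_2,\dots,v_d\}$ are among its vertices, and $G_0 \setminus \{v_1\} = G_1$ satisfies $G_1 - v_2 - \cdots - v_d = G_d = G\setminus N[v_0]$.

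The substitution step is then: starting from $R_{G,v_0} = \lambda_{v_0} Z_{G_d}(\lambda)/Z_{G_0}(\lambda)$, replace $Z_{G_0}(\lambda)$ using the identity above to get $R_{G,v_0} = \lambda_{v_0} Z_{G_d}(\lambda)\big/\bigl(Z_{G_1}(\lambda)(1+R_{G_0,v_1})\bigr)$. Now I would like to recognize $\lambda_{v_0} Z_{G_d}(\lambda)/Z_{G_1}(\lambda)$ as an instance of the same quantity for a smaller graph. Consider the graph $G' = G - v_1$ (equivalently, $G_0$ has $v_1$ removed and $v_0$ put back, i.e. $G' = G - v_1$) with distinguished vertex $v_0$; then $v_0$ has neighbors $v_2,\ldots,v_d$ in $G'$, the graph $G' - v_0 = G_1$, and $G'\setminus N_{G'}[v_0] = G_d$, so $R_{G',v_0} = \lambda_{v_0} Z_{G_d}(\lambda)/Z_{G_1}(\lambda)$ and moreover $R_{G'_{i-1}, v_i} = R_{G_{i-1},v_i}$ for $i = 2,\dots,d$ since the relevant induced subgraphs coincide. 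Applying the inductive hypothesis to $G'$ (which has $d-1$ neighbors of $v_0$, and whose relevant $Z_{G_i}$ are nonzero by assumption) gives $R_{G',v_0} = \lambda_{v_0}\big/\prod_{i=2}^d(1+R_{G_{i-1},v_i})$, and combining yields $R_{G,v_0} = \lambda_{v_0}\big/\prod_{i=1}^d(1+R_{G_{i-1},v_i})$, as desired.

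The main obstacle is purely bookkeeping: one must carefully verify that deleting vertices commutes appropriately, so that $G - v_1$ with distinguished vertex $v_0$ really has as its associated sequence of subgraphs $(G_i)_{i\ge 1}$ the tail of the original sequence, and that all the nonvanishing hypotheses needed to invoke the induction hypothesis for $G'$ are among the hypotheses $Z_{G_i}(\lambda)\neq 0$, $i=0,\dots,d$, already assumed. There is no analytic difficulty; the entire argument is a single application of \eqref{eq:fund recurrence} plus the definition \eqref{eq:def R}, iterated. One should also keep in mind the edge case $d\geq 1$ but $G_d$ empty or $v_0$ isolated after deletions — these are handled uniformly since $Z$ of the empty graph is $1$ and the recurrences remain valid.
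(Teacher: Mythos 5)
Your proof is correct and is essentially the paper's argument: both rest on the identity $Z_{G_{i-1}}(\lambda)=Z_{G_i}(\lambda)\bigl(1+R_{G_{i-1},v_i}\bigr)$ obtained by applying \eqref{eq:fund recurrence} at $v_i$, which the paper multiplies out at once as the telescoping product $Z_{G_0}/Z_{G_d}=\prod_{i=1}^d Z_{G_{i-1}}/Z_{G_i}$, while you unroll the same computation as an induction on $d$ via the graph $G-v_1$. Your bookkeeping checks out (the subgraph sequence attached to $G-v_1$ is exactly $G_1,\dots,G_d$, and the nonvanishing hypotheses needed for the inductive call are among those assumed), so the two proofs coincide in substance.
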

\begin{proof}
Let us write
\begin{align*}
\frac{Z_{G-v_0}(\lambda)}{Z_{G\setminus N[v_0]}(\lambda)}&=\frac{Z_{G_0}(\lambda)}{Z_{G_1}(\lambda)}\frac{Z_{G_1}(\lambda)}{Z_{G_2}(\lambda)}\cdots \frac{Z_{G_{d-1}}(\lambda)}{Z_{G_{d}}(\lambda)}=\prod_{i=1}^{d} \frac{Z_{G_{i}}(\lambda)+\lambda_{v_i}Z_{G_{i-1}\setminus N[v_i]}}{Z_{G_i}(\lambda)}
\\
&=\prod_{i=1}^{d}(1+R_{G_{i-1},v_i}),
\end{align*}
where in the second equality we use \eqref{eq:fund recurrence}.
As
\[R_{G,v_0}=\frac{\lambda_{v_0} Z_{G\setminus N[v_0]}(\lambda)}{Z_{G-v_0}(\lambda)}=\frac{\lambda_{v_0}}
{\frac{Z_{G-v_0}(\lambda)}{Z_{G\setminus N[v_0]}(\lambda)}},
\]
the lemma follows.
\end{proof}

As an illustration of Lemma~\ref{lem:weitz} we will now prove a result that shows that $Z_G(\lambda)$ is nonzero as long as the norms and arguments of the $\lambda_v$ are small enough. This result is implied by our main theorem for angles that are much smaller still, but the statement below is not implied by our main theorem, and is another contribution to Sokal's question \cite[Question 2.4]{S1}.
The proof moreover serves as warm up for the proof of our main result.

\begin{theorem}\label{thm:main1}
Let $G=(V,E)$ be any graph of maximum degree at most $\Delta\geq 2$.
Let $\eps>0$ and let $\lambda\in\C^V$ be such that $|\lambda_v|\leq\tan\big(\frac{\pi}{(2+\eps)(\Delta-1)}\big)$, and such that $|\arg(\lambda_v)|<\frac{\eps/2}{2+\eps}\pi$ for all $v\in V$. Then $Z_G(\lambda)\neq 0$.
\end{theorem}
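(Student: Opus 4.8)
The plan is to prove the theorem by induction on $|V(G)|$, in the spirit of Weitz's correlation-decay argument and of Lemma~\ref{lem:weitz}: instead of only showing $Z_G(\lambda)\ne 0$, I would also keep track of the fact that every ratio produced by the recursion of Lemma~\ref{lem:weitz} stays inside a fixed region of $\C$. Write $\theta:=\frac{\pi}{(2+\eps)(\Delta-1)}$ and $\phi:=\frac{\eps/2}{2+\eps}\pi$, so that the hypotheses read $|\lambda_v|\le\tan\theta$ and $|\arg(\lambda_v)|<\phi$ for all $v$, and record the identity $(\Delta-1)\theta+\phi=\tfrac{\pi}{2}$, which is the arithmetic driving the whole proof. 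Let $S:=\{\,x\in\C:\ |x|\le\tan\theta\ \text{ and }\ \mathrm{Re}(x)\ge 0\,\}$. I would prove, by induction on $|V(H)|$ and simultaneously for all graphs $H$ with $\Delta(H)\le\Delta$ and all admissible $\lambda\in\C^{V(H)}$, the two statements: (a) $Z_H(\lambda)\ne 0$; and (b) $R_{H,v_0}\in S$ for every $v_0\in V(H)$ with $\deg_H(v_0)\le\Delta-1$. Statement (a) for $H=G$ is the theorem, and (b) is the auxiliary invariant that makes the induction close; since $\phi<\tfrac{\pi}{2}$, the hypotheses already force each $\lambda_v\in S$.

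The geometric heart is the elementary estimate: \emph{if $x\in S$, then $\mathrm{Re}(1+x)\ge 1$ (so $|1+x|\ge 1$), and $|\arg(1+x)|\le\theta$.} The first part is trivial; the second reduces to checking that $\frac{b}{1+a}\le\tan\theta$ whenever $a\ge 0$ and $a^2+b^2\le\tan^2\theta$, which is a one-line optimisation (the ratio is maximal at $a=0$). Granting this, let $x_1,\dots,x_m\in S$ and let $\mu\in\C$ with $|\mu|\le\tan\theta$ and $|\arg(\mu)|<\phi$. Then $\prod_{i=1}^m(1+x_i)$ is nonzero (each factor has real part $\ge 1$), and
\[
\Big|\tfrac{\mu}{\prod_{i=1}^m(1+x_i)}\Big|\le\frac{\tan\theta}{1}=\tan\theta,
\qquad
\Big|\arg\tfrac{\mu}{\prod_{i=1}^m(1+x_i)}\Big|<\phi+m\theta ,
\]
where the additivity of $\arg$ used in the second bound is legitimate because $\phi+m\theta\le\phi+\Delta\theta=\tfrac{\pi}{2}+\theta<\pi$ for all $m\le\Delta$. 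Consequently, for $m\le\Delta-1$ the second bound is $\le\phi+(\Delta-1)\theta=\tfrac{\pi}{2}$, so $\mu/\prod(1+x_i)\in S$; and for $m\le\Delta$ it is $<\pi$, so $\mu/\prod(1+x_i)$ is not a negative real number, in particular is $\ne-1$.

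With these two facts the induction is routine. Base case $|V(H)|=1$, say $V(H)=\{v_0\}$: then $Z_H=1+\lambda_{v_0}$ has real part $\ge 1$, so $Z_H\ne 0$, and $R_{H,v_0}=\lambda_{v_0}\in S$; so (a) and (b) hold. Inductive step: assume (a), (b) for all graphs with fewer vertices. Fix any $v_0\in V(H)$ with neighbours $v_1,\dots,v_d$, $d=\deg_H(v_0)\le\Delta$, and set $G_0:=H-v_0$, $G_i:=G_{i-1}-v_i$ for $i\ge 1$. Each $G_i$ has fewer vertices than $H$ and $\Delta(G_i)\le\Delta$, so $Z_{G_i}(\lambda)\ne 0$ by (a); hence Lemma~\ref{lem:weitz} gives $R_{H,v_0}=\lambda_{v_0}\big/\prod_{i=1}^d(1+R_{G_{i-1},v_i})$. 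Since $v_i$ is adjacent to $v_0$ in $H$ while $v_0\notin G_{i-1}$, we have $\deg_{G_{i-1}}(v_i)\le\Delta-1$, so $R_{G_{i-1},v_i}\in S$ by (b). The $m\le\Delta$ case above (with $\mu=\lambda_{v_0}$) yields $R_{H,v_0}\ne-1$, hence $Z_H(\lambda)\ne 0$ by~\eqref{eq:no zero not -1} (which applies since $Z_{G_0}=Z_{H-v_0}\ne 0$), proving (a); and if moreover $\deg_H(v_0)\le\Delta-1$, the $m\le\Delta-1$ case gives $R_{H,v_0}\in S$. Running this for every such $v_0$ proves (b), and the induction is complete.

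I do not expect a serious obstacle. The only genuine decision is the choice of the region $S$: the identity $(\Delta-1)\theta+\phi=\tfrac{\pi}{2}$ is exactly what makes $S$ invariant under one step of the Lemma~\ref{lem:weitz} recursion, while leaving just enough slack for the single extra factor $(1+x)$ contributed by the (possibly degree-$\Delta$) vertex $v_0$ at the top, which costs only an additional $\theta<\tfrac{\pi}{2}$ in the argument and so still keeps us strictly away from $-1$. The one step requiring a little care is the geometric estimate $|\arg(1+x)|\le\theta$ on $S$; note in particular that it (and everything else above) never used $\tan\theta<1$, so the argument is uniform in $\Delta\ge 2$.
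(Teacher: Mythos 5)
Your proposal is correct and follows essentially the same route as the paper: you keep every ratio from Lemma~\ref{lem:weitz} in the region $\{|x|\le\tan\theta,\ \Re(x)\ge 0\}$, use $|1+x|\ge 1$ for the modulus and $|\arg(1+x)|\le\theta$ for the argument, and exploit the identity $(\Delta-1)\theta+\phi=\tfrac{\pi}{2}$ plus the extra slack $\theta<\tfrac{\pi}{2}$ at the final (possibly degree-$\Delta$) vertex, exactly as in the paper's proof. The only difference is organizational (induction on the number of vertices over all graphs of maximum degree $\le\Delta$, rather than the paper's induction over subsets $U\subseteq V\setminus\{v_0\}$ of a fixed connected graph), which is immaterial.
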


\begin{proof}
\change{Since the independence polynomial is multiplicative over the disjoint union of graphs,} we may assume that $G$ is connected.
Fix a vertex $v_0$ of $G$.
We will show by induction that for each subset $U\subseteq V\setminus\{v_0\}$ we have
\begin{itemize}
\item[(i)]\text{ $Z_{G[U]}(\lambda)\neq 0$,}
\item[(ii)]\text{if $u\in U$ has a neighbor in $V\setminus U$, then $|R_{G[U],u}|<\tan\big(\frac{\pi}{(2+\eps)(\Delta-1)}\big)$,}
\item[(iii)]\text{if $u\in U$ has a neighbor in $V\setminus U$, then $\Re(R_{G[U],u})>0$.}
\end{itemize}
Clearly, if $U=\emptyset$ both (i), (ii) and (iii) are true.
Now suppose $U\subseteq V\setminus \{v_0\}$ and let $H=G[U]$.
Let $u_0\in U$ be such that $u_0$ has a neighbor in $V\setminus U$ ($u_0$ exists as $G$ is connected).
Let $u_1,\ldots,u_d$ be the neighbors of $u_0$ in $H$. Note that $d\leq \Delta-1$.
Define $H_0=H-u_0$ and set for $i=1,\ldots,d$ $H_i=H_{i-1}-u_i$.
Then by induction we know that for $i=0,\ldots,d$, $Z_{H_i}(\lambda)\neq 0$ and for $i\geq 1$, $\Re(R_{H_{i-1},u_i})>0$, implying that $|1+R_{H_{i-1},u_{i}}|\geq 1$.
So by Lemma \ref{lem:weitz} we know that
\[
|R_{H,u_0}|=\frac{|\lambda_{u_0}|}{\prod_{i=1}^d |1+R_{H_{i-1},u_{i}}|}<|\lambda_{u_0}|\leq \tan\bigg(\frac{\pi}{(2+\eps)(\Delta-1)}\bigg),
\]
showing that (ii) holds for $U$.

To see that (iii) holds we look at the angle $\alpha$ that $R_{H,u_0}$ makes with the positive real axis.
It suffices to show that $|\alpha|<\pi/2$.
Since by induction $\Re(R_{H_{i-1},u_i})>0$ and  $|R_{H_{i-1},u_i}|\leq \tan\big(\frac{\pi}{(2+\eps)(\Delta-1)}\big)$, we see that the angle $\alpha_i$ that $1+R_{H_{i-1},u_i}$ makes with the positive real axis satisfies $|\alpha_i|\leq \frac{\pi}{(2+\eps)(\Delta-1)}.$
This implies by Lemma \ref{lem:weitz} that
\[|\alpha|<(\Delta-1)\frac{\pi}{(2+\eps)(\Delta-1)}+\frac{(\eps/2)\pi}{2+\eps}= \pi/2,\]
showing that (iii) holds.

As by (iii), $R_{H,u_0}$ has strictly positive real part and hence does not equal $-1$ we conclude by \eqref{eq:no zero not -1} that $Z_{H}(\lambda)\neq 0$.
So we conclude that (i), (ii) and (iii) hold for all $U\subseteq V\setminus\{v_0\}$.

To conclude the proof, it remains to show that $Z_G(\lambda)\neq 0$.
Let $v_1,\ldots,v_d$ be the neighbors of $v_0$. Let $G_i$, for $i=0,\ldots,d$, be defined as the graphs $H_i$ above.
Then by (i) and (ii) we know that for $i=0,\ldots,d$, $Z_{G_i}(\lambda)\neq 0$ and $\Re(R_{G_{i-1},v_i})>0$ for $i\geq 1$.
So as above we have
that the angle $\alpha_i$, that $1+R_{G_{i-1},v_{i}}$ makes with the positive real line, satisfies $|\alpha_i|\leq \frac{\pi}{(2+\eps)\Delta-1}$.
So by Lemma \ref{lem:weitz} the absolute value of the argument of $R_{G,v_0}$ is bounded by
\[
\frac{(\eps/2)\pi}{2+\eps}+\Delta\frac{\pi}{(2+\eps)(\Delta-1)}\leq \frac{(2+\eps/2)\pi}{2+\eps}<\pi,
\]
using that $\frac{\Delta}{\Delta-1}\leq 2$.
This implies by \eqref{eq:no zero not -1} that $Z_G(\lambda)\neq0$ and finishes the proof.
\end{proof}

Define for $\lambda\in \C$ and $d\in \N$ the map $F_{d,\lambda}:\widehat \C^{d}\to \widehat \C$ by
\[
(x_1,\ldots,x_{d})\mapsto \frac{\lambda}{\prod_{i=1}^{d}(1+x_i)}.
\]
Given $\epsilon>0$, the proof of Theorem~\ref{thm:main1} consisted mainly of finding a domain $D\subset \widehat \C$ not containing $-1$ such that if $x_1, \ldots ,x_d \in D$, then $F_{d,\lambda}(x_1, \ldots, x_d) \in D$ for all $0 \le d \le \Delta-1$.

To prove Theorem~\ref{thm:main}, we will similarly construct for each $\Delta$ a domain $D$, containing the interval $[0, \lambda_\Delta]$ but not the point $-1$, which is mapped inside itself by $f_{d,\lambda}$ for all $0\leq d\leq \Delta-1$ and all $\lambda$ in a sufficiently small complex neighborhood of the interval $[0,(1-\epsilon)\lambda_\Delta)$. Had these functions $f_{d,\lambda}$ all been strict contractions on the interval $[0, \lambda_\Delta]$, the existence of such a domain $D$ would have been immediate. Unfortunately the functions $f_{d,\lambda}$ are typically not contractions, even for real valued $\lambda$. However, since the positive real line is contained in the basin of an attracting fixed point, it follows from basic theory of complex dynamical systems \cite{M06} that each $f_{d,\lambda}$ is strictly contracting on $[0,\lambda_\Delta)$ with respect to the Poincar\'e metric of the corresponding attracting basin. While these Poincar\'e metrics vary with $\lambda$ and $d$, this observation does give hope for finding coordinates with respect to which all the maps $f_{d,\lambda}$ are contractions.

In the next section we will introduce explicit coordinates with respect to which $f_{\Delta-1,\lambda_\Delta}$ becomes a contraction, and then show that for $d \le \Delta-1$ and $\lambda \in [0, \lambda_\Delta)$ the maps $f_{d, \lambda}$ are all strict contractions with  respect to the same coordinates. We will then utilize these coordinates to give a proof of Theorem~\ref{thm:main} in Section~\ref{sec:proof}.

\section{A change of coordinates}\label{sec:coordinate}

It is our aim in this section to find a coordinate change for each $\Delta\geq 3$ so that the maps $f_{d,\lambda}$ are contractions in these coordinates for any $0\leq d\leq \Delta-1$ and any $0\leq \lambda \leq \lambda_\Delta$.

\subsection{The case $d=\Delta-1$ and $\lambda=\lambda_\Delta$}
We consider the coordinate changes.
$$
z = \varphi_y(x) = \log(1+y\log(1+x)),
$$
with $\change{y>0}$. We note that a similar coordinate change using a double logarithm was used in \cite{LL15}. The best argument for using the specific form above is that it seems to fit our purposes.

Our initial goal is to pick a $y$, depending on $\Delta$ such that the \emph{parabolic} map $f(x):=f_{\Delta-1,\lambda_\Delta}(x)$ becomes a contraction with respect to the new coordinates. Note that we call $f$ parabolic if $\lambda = \lambda_\Delta$.
In this case the fixed point of $f$ is given by
$$
x_\Delta=\frac{1}{\Delta-2} = \frac{1}{d-1},
$$
and has derivative $f^\prime(x_\Delta) = -1$, and is thus parabolic.
In the $z$-coordinates we consider the map
$$
g(z) = g_{\Delta-1,\lambda_\Delta}(z) = \varphi_y \circ f  \circ \varphi_y^{-1}.
$$
\change{Note that the function $\varphi_y:\mathbb R_+ \rightarrow \mathbb R_+$ is bijective, and $\mathbb R_+$ is forward invariant under $f$. It follows that the composition $g$ is well defined on $ \mathbb R_+$.}
\change{We write $z_\Delta:= \varphi_y(x_\Delta)$.}
Then $z_\Delta$ is fixed under $g$, and one immediately obtains $g^\prime(z_\Delta) = -1$. Thus, in order for $|g^\prime| \le 1$ we in particular need that $g^{\prime\prime}(z_\Delta) = 0$.

Let us start by computing $g^\prime$ and $g^{\prime\prime}$.
Writing $x_1 = f(x_0)$ and $z_0 = \varphi_y(x_0)$ we note that
\begin{align}
g^\prime(z_0) = & \varphi_y^\prime(x_1) \cdot f^\prime(x_0) \cdot (\varphi_y^{-1})^\prime(z_0) \nonumber
\\
= & \frac{\varphi_y^\prime(x_1)}{\varphi_y^\prime(x_0)} \cdot f^\prime(x_0)	\nonumber
\\
= & \frac{1+ y\log(1+x_0)}{1+y\log(1+x_1)} \cdot \frac{1+x_0}{1+x_1} \cdot \frac{-d x_1}{1+x_0}\nonumber
\\
= & \frac{1+ y\log(1+x_0)}{1+y\log(1+x_1)} \cdot \frac{-d x_1}{1+x_1}.	\label{eq:g prime}
\end{align}
\change{Now note that
$$
g^{\prime\prime} = \frac{\partial g^\prime}{\partial x_0} \cdot \frac{\partial x_0}{\partial z_0},
$$
and since $ \frac{\partial x_0}{\partial z_0} > 0$, we look for points $z_0$ where $ \frac{\partial g^\prime}{\partial x_0}(z_0) = 0 $.} We obtain
$$
\begin{aligned}
\change {\frac{\partial g^\prime}{\partial x_0}(z_0)}  = & \frac{y/(1+x_0)}{1+y\log(1+x_1)} \cdot \frac{-d x_1}{1+x_1}\\
 + & (1+ y\log(1+x_0)) \cdot \frac{\partial}{\partial x_1} \left( \frac{1}{1+y\log(1+x_1)} \cdot \frac{-d x_1}{1+x_1}\right) \cdot \frac{\partial x_1}{\partial x_0}.
\end{aligned}
$$
\change{By considering $x_1$ as a variable depending on $x_0$, and thus also on $z_0$, the presentation of the calculations here and later in this section becomes significantly more succinct}. Since
\[
\frac{\partial x_1}{\partial x_0}=\frac{-dx_1}{1+x_0},
\]
and since
\begin{equation}\label{eq:partial}
\frac{\partial}{\partial x_1} \left( \frac{1}{1+y\log(1+x_1)} \cdot \frac{-dx_1}{1+x_1}\right) = d \cdot \frac{x_1 y - (1+y\log(1+x_1))}{(1+x_1)^2(1+y\log(1+x_1))^2},
\end{equation}
we obtain
\begin{align}\label{eq:g double prime}
\change {\frac{\partial g^\prime}{\partial x_0}(z_0)}  =& \frac{y/(1+x_0)}{1+y\log(1+x_1)} \cdot \frac{-d x_1}{1+x_1}
\\
&+(1+ y\log(1+x_0))\frac{-d^2x_1}{1+x_0}\cdot \frac{x_1 y - (1+y\log(1+x_1))}{(1+x_1)^2(1+y\log(1+x_1))^2}.	\nonumber
\end{align}

\begin{prop}\label{prop:compute y}
The only value of $y > 0$ for which $g^{\prime\prime}(z_\Delta) = 0$ is given by
$$
y = y_\Delta := \frac{1}{2x_\Delta - \log(1+x_\Delta)}.
$$
\end{prop}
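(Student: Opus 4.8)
The plan is to evaluate the already-derived expression \eqref{eq:g double prime} for $\partial g^\prime/\partial x_0$ at the fixed point $z_\Delta$ and solve the resulting equation for $y$. First I would record that at $z_0 = z_\Delta$ one has $x_0 = x_\Delta$, and since $x_\Delta$ is a fixed point of $f$ also $x_1 = f(x_0) = x_\Delta$. Because $g^{\prime\prime} = (\partial g^\prime/\partial x_0)\cdot(\partial x_0/\partial z_0)$ with $\partial x_0/\partial z_0 = (\varphi_y^{-1})^\prime(z_\Delta) > 0$, the condition $g^{\prime\prime}(z_\Delta) = 0$ is equivalent to $(\partial g^\prime/\partial x_0)(z_\Delta) = 0$, so it suffices to analyze the latter.

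Writing $d = \Delta - 1$, $L := \log(1+x_\Delta)$ and $A := 1 + yL$ (so $A > 1 > 0$ since $y, L > 0$), substituting $x_0 = x_1 = x_\Delta$ into \eqref{eq:g double prime} makes both summands carry the common nonzero factor $\dfrac{-d\,x_\Delta}{(1+x_\Delta)^2 A}$. Dividing it out, the equation $(\partial g^\prime/\partial x_0)(z_\Delta) = 0$ collapses to the scalar identity
\[
y + \frac{d\,(x_\Delta y - A)}{1+x_\Delta} = 0 ,
\]
and clearing the denominator together with the definition of $A$ turns this into $y\bigl(1 + (1+d)x_\Delta - dL\bigr) = d$. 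This is linear in $y$, which immediately gives uniqueness of the solution.

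To identify the solution explicitly, I would use $x_\Delta = 1/(d-1)$, i.e. $(d-1)x_\Delta = 1$. Then $1 + (1+d)x_\Delta = (d-1)x_\Delta + (1+d)x_\Delta = 2d\,x_\Delta$, so the coefficient of $y$ equals $2d\,x_\Delta - dL = d\,(2x_\Delta - L)$, and the equation becomes $y\cdot d\,(2x_\Delta - L) = d$, i.e. $y = \dfrac{1}{2x_\Delta - \log(1+x_\Delta)} = y_\Delta$. Finally, $y_\Delta > 0$ because $\log(1+t) < t$ for $t > 0$ gives $2x_\Delta - \log(1+x_\Delta) > x_\Delta > 0$, so the unique solution indeed lies in $(0,\infty)$.

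The argument is elementary throughout; the only point requiring care is the bookkeeping when the common factor is pulled out of the two-term expression \eqref{eq:g double prime} after the substitution $x_1 = x_0 = x_\Delta$, and the small algebraic simplification using $(d-1)x_\Delta = 1$. I do not foresee a genuine obstacle.
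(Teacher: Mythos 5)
Your proposal is correct and follows essentially the same route as the paper: both evaluate \eqref{eq:g double prime} at $x_0=x_1=x_\Delta$ (after reducing $g^{\prime\prime}(z_\Delta)=0$ to $\partial g^\prime/\partial x_0=0$ via positivity of $\partial x_0/\partial z_0$) and solve the resulting linear equation in $y$, the only cosmetic difference being that you simplify via $(d-1)x_\Delta=1$ while the paper uses the equivalent identity $dx_\Delta/(1+x_\Delta)=1$. Your algebra checks out and matches the paper's displayed expression for $\frac{\partial g^\prime}{\partial x_0}(z_\Delta)$.
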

\begin{proof}
Noting that $x_1 = x_0$ and $dx_1/(1+x_1)=1$ when $x_0 = x_\Delta$, we obtain
$$
\frac{\partial g^\prime}{\partial x_0}(z_\Delta) = d \cdot \frac{1+y\log(1+x_\Delta) - 2x_\Delta y}{(1+y\log(1+x_\Delta))(1+x_\Delta)^2}.
$$
Thus $g^{\prime\prime}(z_\Delta) = 0$ if and only if
$$
y=y_\Delta := \frac{1}{2x_\Delta - \log(1+x_\Delta)}.
$$
\end{proof}

From now \change{on} we assume that $y = y_\Delta$.

\begin{corollary}\label{cor:g prime is bounded}
We have that $|g^\prime(z)| \le 1$ for all $z \ge 0$.
\end{corollary}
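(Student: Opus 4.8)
The plan is to use the explicit expression~\eqref{eq:g prime} for $g^\prime$ together with the value $y=y_\Delta$ fixed in Proposition~\ref{prop:compute y}, and to reduce the statement to a single real-variable inequality that is then settled by a monotonicity argument.

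First I would record that $\varphi_y(0)=0$ and $\varphi_y^\prime>0$ on $[0,\infty)$, so $\varphi_y$ is an increasing bijection of $[0,\infty)$ onto itself; hence it suffices to bound $g^\prime(z_0)$ for $z_0=\varphi_y(x_0)$ with $x_0\ge 0$. Writing $x_1=f(x_0)=\lambda_\Delta/(1+x_0)^{d}>0$, formula~\eqref{eq:g prime} reads
\[
g^\prime(z_0)=\frac{1+y\log(1+x_0)}{1+y\log(1+x_1)}\cdot\frac{-d x_1}{1+x_1},
\]
where the first two factors are positive and the third negative, so $g^\prime(z_0)<0$ on $[0,\infty)$ and the claim $|g^\prime(z_0)|\le 1$ is equivalent to
\[
\bigl(1+y\log(1+x_0)\bigr)\,d x_1\ \le\ \bigl(1+y\log(1+x_1)\bigr)(1+x_1).
\]

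Next I would substitute $s=1+x_0\ge 1$ and $t=1+x_1=1+\lambda_\Delta s^{-d}>1$, so that the inequality becomes $Q(s)\ge 0$ for all $s\ge 1$, where $Q(s):=(1+y\log t)\,t-d(1+y\log s)(t-1)$ with $t=t(s)=1+\lambda_\Delta s^{-d}$. The calculation at the core is the derivative of $Q$: using $t^\prime(s)=-d(t-1)/s$ one obtains $Q^\prime(s)=-\tfrac{d(t-1)}{s}S(s)$ with $S(s):=1+2y+y\log t-d-dy\log s$, and differentiating once more gives $S^\prime(s)=-\tfrac{dy}{s}\cdot\tfrac{2t-1}{t}<0$ on $[1,\infty)$, so $S$ is strictly decreasing. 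Evaluating at $s_\Delta:=1+x_\Delta=d/(d-1)$, where $t=s_\Delta$, and using the defining relation $y_\Delta\bigl(2x_\Delta-\log(1+x_\Delta)\bigr)=1$ together with $d-1=1/x_\Delta$, I would check that $S(s_\Delta)=-(d-1)+\tfrac{y_\Delta}{x_\Delta}\bigl(2x_\Delta-\log(1+x_\Delta)\bigr)=0$.

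Assembling these facts: since $S$ is strictly decreasing with $S(s_\Delta)=0$, it is positive on $[1,s_\Delta)$ and negative on $(s_\Delta,\infty)$; as $d(t-1)/s>0$ throughout, $Q$ is strictly decreasing on $[1,s_\Delta]$ and strictly increasing on $[s_\Delta,\infty)$, hence attains its minimum at $s_\Delta$, where $Q(s_\Delta)=(1+y\log s_\Delta)\bigl(s_\Delta-d(s_\Delta-1)\bigr)=0$ since $d(s_\Delta-1)=s_\Delta$. Thus $Q\ge 0$ on $[1,\infty)$, which is exactly the required inequality, and $|g^\prime(z)|\le 1$ follows. I expect the main work to be purely the bookkeeping in differentiating $Q$ and $S$ and the cancellation producing $S(s_\Delta)=0$; both become routine once the substitution $s=1+x_0$, $t=1+x_1$ is in place and one keeps the definition of $y_\Delta$ at hand. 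As a consistency check, $S(s_\Delta)=0$ also gives $Q^\prime(s_\Delta)=0$, which matches $g^{\prime\prime}(z_\Delta)=0$ from Proposition~\ref{prop:compute y}.
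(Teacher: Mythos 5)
Your proof is correct, and it takes a genuinely different route from the paper's. The paper argues via endpoint and critical-point behaviour: it notes $\lim_{z\to+\infty}|g^\prime(z)|=0$ and reduces the corollary to the endpoint estimate $|g^\prime(0)|<1$, verified by showing $\frac{\partial g^\prime}{\partial x_0}(0)<0$ through the elementary bound $d-y(1+\lambda)<0$; this is exactly the bookkeeping (extrema of $g^\prime$ occur at $z=0$, at infinity, or where $g^{\prime\prime}=0$) that is reused later in Lemmas~\ref{lem:degree}--\ref{lem:case 3,4} and Proposition~\ref{prop:explicit bound dg}. You instead give a direct global argument special to the parabolic case $d=\Delta-1$, $\lambda=\lambda_\Delta$: rewriting $|g^\prime|\le 1$ as $Q(s)\ge 0$ with $s=1+x_0$, $t=1+x_1$, your computations check out --- $t^\prime=-d(t-1)/s$, $Q^\prime=-\tfrac{d(t-1)}{s}S$ with $S(s)=1+2y+y\log t-d-dy\log s$, $S^\prime=-\tfrac{dy}{s}\tfrac{2t-1}{t}<0$, and $S(s_\Delta)=0$ follows from $t(s_\Delta)=s_\Delta$ (the fixed-point identity $\lambda_\Delta=x_\Delta(1+x_\Delta)^d$), $d-1=1/x_\Delta$ and the defining relation of $y_\Delta$, while $Q(s_\Delta)=0$ since $d(s_\Delta-1)=s_\Delta$. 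The payoff of your version is that it is self-contained and slightly stronger: it exhibits $z_\Delta$ as the unique global minimizer of $g^\prime$ on $[0,\infty)$ with value exactly $-1$, so $|g^\prime(z)|<1$ for all $z\ne z_\Delta$, and it does not need the limit at infinity nor any implicit appeal to the structure of interior critical points. The trade-off is that it leans on the exact fixed-point relation at $\lambda=\lambda_\Delta$, so unlike the paper's scheme it does not directly set up the perturbative analysis in $\lambda$ and $d$ carried out in the subsequent lemmas (where, e.g., the sign computation $g^{\prime\prime}(0)<0$ from the paper's proof is cited again); but for the corollary as stated your argument is complete.
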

\begin{proof}
Since
$$
\lim_{z \rightarrow +\infty} |g^\prime(z)| = 0,
$$
it suffices to show that $|g^\prime(0)| < 1$, which follows if we show that $g^{\prime\prime}(0)<0$\change{, for which it is sufficient to show that $\frac{\partial g^\prime}{\partial x_0}(0)<0$}.

Plugging in $x_0=0$ in \eqref{eq:g double prime} we get
$$
\change{\frac{\partial g^\prime}{\partial x_0}(0)} = \frac{dx_1\left(d - y(1+x_1)\right)(1+y\log(1+x_1) - dx_1 y)}{(1+y\log(1+x_1))^2(1+x_1)^2},
$$
with $x_1 = f(0) = \lambda$. Hence we can complete the proof by showing that
\begin{equation}
d - y(1+\lambda) = d - y - y\lambda < 0.\label{eq:g'(0)<1}
\end{equation}
Using that $1/y=2/(d-1)-\log(d/(d-1))$ we observe that
\[
1/y<\frac{1}{d-1}+\frac{1}{2(d-1)^2}
\]
and hence $y > \frac{(d-1)^2}{d-1/2}$ .
From this we obtain
\begin{align*}
d - y - y\lambda &<\frac{d(d-1/2)-(d-1)^2-(d-1)(d/(d-1))^d)}{d-1/2}
\\
&<\frac{d(d-1/2)-(d-1)^2-(d-1)(1+d/(d-1))}{d-1/2}=\frac{-d/2}{d-1/2}<0,
\end{align*}
which completes the proof.
\end{proof}

In particular it follows that for all $x \ge 0$ we have that $f^{\circ n}(x) \rightarrow x_\Delta$.

\subsection{Smaller values of $\lambda$ and $d$}
We now consider the case where $\lambda < \lambda_\Delta$, and the map $f$ has degree $d\leq \Delta-1$.
We again consider the map
\[
g_{d,\lambda}(z)=\varphi_{y}\circ f_{d,\lambda}\circ \varphi^{-1}_y.
\]
Again we will often just write $g$ instead of $g_{d,\lambda}$.
Our goal is to show that \change{$|g^\prime(z_0)| < 1$ for all $z_0 \geq 0$}.

To do so we will consider $g^\prime$ as a function of $\lambda,d$ and $z_0$.
We first look at the case where $\lambda$ is fixed and $d$ is varying.

\begin{lemma}\label{lem:degree}
Let $\Delta\in \N$ with $\Delta\geq 3$.
Let $0\leq \lambda\leq \lambda_\Delta$ and let $d\in\{0,1,\ldots,\Delta-1\}$.
Let $z_0\geq 0$ be such that $g^{\prime\prime}_{d,\lambda}(z_0)=0$. Then we have $0\geq g_{d,\lambda}^\prime(z_0)\geq g_{\Delta-1,\lambda}^\prime(z_0)$.
\end{lemma}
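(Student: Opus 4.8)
The plan is to put $g'_{d,\lambda}$ in closed form, read off the upper bound for free, convert the hypothesis $g''_{d,\lambda}(z_0)=0$ into an algebraic identity relating $x_0$ and $x_1:=f_{d,\lambda}(x_0)$, and then reduce the lower bound to a one-variable inequality in which the hypotheses $\lambda\le\lambda_\Delta$ and $y=y_\Delta$ finally play a role.

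First I would abbreviate $L(t):=1+y\log(1+t)$ and $P(t):=(1+t)L(t)$, and set $x_0:=\varphi_y^{-1}(z_0)$, $x_1:=f_{d,\lambda}(x_0)$, $L_0:=L(x_0)$, $L_1:=L(x_1)$. By \eqref{eq:g prime},
\[
g'_{d,\lambda}(z_0)=\frac{L_0}{L_1}\cdot\frac{-dx_1}{1+x_1}=-\,L_0\,d\,\frac{x_1}{P(x_1)},
\]
and more generally $g'_{\delta,\lambda}(z_0)=-L_0\,\delta\,x_1^{(\delta)}/P(x_1^{(\delta)})$ with $x_1^{(\delta)}:=f_{\delta,\lambda}(x_0)=\lambda(1+x_0)^{-\delta}$. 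For $z_0\ge 0$ we have $x_0,x_1\ge 0$, hence $L_0,L_1\ge 1$, so the right-hand side is $\le 0$; this gives $0\ge g'_{d,\lambda}(z_0)$. If $d=0$ or $\lambda=0$, then $f_{d,\lambda}$ and hence $g_{d,\lambda}$ is constant, so $g'_{d,\lambda}\equiv 0\ge g'_{\Delta-1,\lambda}(z_0)$ and the lemma holds; so assume $d\ge 1$ and $0<\lambda\le\lambda_\Delta$, whence $x_1>0$.

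Since $\partial x_0/\partial z_0>0$, the hypothesis $g''_{d,\lambda}(z_0)=0$ is equivalent to $\tfrac{\partial g'}{\partial x_0}(z_0)=0$; starting from \eqref{eq:g double prime} and clearing denominators (multiply by $-(1+x_0)(1+x_1)^2L_1^2/(dx_1)$) this becomes the identity $L_0\,d\,(L_1-x_1y)=y(1+x_1)L_1$. Its right-hand side being positive forces $L_1-x_1y>0$, and it yields $\log(1+x_0)=\tfrac{L_0-1}{y}=\tfrac{P(x_1)}{d(L_1-x_1y)}-\tfrac1y$. Now set $\bar x_1:=x_1^{(\Delta-1)}=x_1(1+x_0)^{-(\Delta-1-d)}$, so $0<\bar x_1\le x_1$, with equality exactly when $d=\Delta-1$ — in which case the inequality below is an equality, consistent with the non-strict statement. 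Since $P$ is strictly increasing on $[0,\infty)$ and $g'_{\Delta-1,\lambda}(z_0)=-L_0(\Delta-1)\bar x_1/P(\bar x_1)$, dividing by $L_0>0$ shows that $g'_{d,\lambda}(z_0)\ge g'_{\Delta-1,\lambda}(z_0)$ is equivalent to
\[
(1+x_0)^{\,\Delta-1-d}\ \le\ \frac{\Delta-1}{d}\cdot\frac{P(x_1)}{P(\bar x_1)} .
\]
Substituting the expression for $\log(1+x_0)$ above (and writing $\bar x_1$ in terms of $x_1$ and $x_0$) makes both sides functions of the single variable $x_1\in(0,\lambda]$, for the parameters $d\le\Delta-1$ and $y=y_\Delta=1/(2x_\Delta-\log(1+x_\Delta))$, $x_\Delta=1/(\Delta-2)$.

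Verifying that last inequality is the main obstacle. The factor $P(x_1)/P(\bar x_1)>1$ on the right cannot be dropped: since $x_0$ need not be small at a critical point, $(1+x_0)^{\Delta-1-d}$ can by itself exceed $(\Delta-1)/d$, so one has to quantify how far $P(\bar x_1)$ falls below $P(x_1)$. I expect to handle it by the same elementary estimates as in the proof of Corollary~\ref{cor:g prime is bounded}: use $x_1\le\lambda\le\lambda_\Delta$, insert the explicit value of $y_\Delta$ together with $x_\Delta=1/(\Delta-2)$, and exploit that $t\mapsto (L(t)-ty)/P(t)$ is decreasing and equals $1$ at $t=0$ in order to control simultaneously $\log(1+x_0)$ and the ratio $P(x_1)/P(\bar x_1)$. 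Everything except this final estimate is routine.
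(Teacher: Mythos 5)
Your setup is sound and matches the paper's ingredients: the closed form $g'_{d,\lambda}(z_0)=-L_0\,d\,x_1/P(x_1)$ is exactly \eqref{eq:g prime}, the sign $g'\le 0$ and the degenerate cases $d=0$, $\lambda=0$ are handled correctly, and your identity $L_0\,d\,(L_1-x_1y)=y(1+x_1)L_1$ together with $L_1-x_1y>0$ and the formula for $\log(1+x_0)$ is precisely \eqref{eq:positive}--\eqref{eq:elimination}. The problem is what comes after: the entire content of the lemma has been repackaged into the inequality
\[
(1+x_0)^{\Delta-1-d}\ \le\ \frac{\Delta-1}{d}\cdot\frac{P(x_1)}{P(\bar x_1)},
\]
and this you do not prove. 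You acknowledge it is ``the main obstacle'' and only express the expectation that estimates in the spirit of Corollary~\ref{cor:g prime is bounded} (using $x_1\le\lambda_\Delta$, the explicit $y_\Delta$, and monotonicity of $t\mapsto(L(t)-ty)/P(t)$) will close it. That is not a proof, and it is not clear those tools suffice: as you yourself note, $x_0$ can be large at a critical point, so $(1+x_0)^{\Delta-1-d}$ can exceed $(\Delta-1)/d$, and any successful argument must use the constraint identity to couple $x_0$ to $x_1$ in an essential quantitative way, not merely through $x_1\le\lambda\le\lambda_\Delta$. As written, the proposal has a genuine gap exactly at the step the lemma is about.

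For comparison, the paper avoids this direct comparison altogether. It treats $d$ as a continuous parameter with $z_0$ (hence $x_0$) fixed and differentiates \eqref{eq:g prime} in $d$: by \eqref{eq:partial}, $\frac{\partial}{\partial d}g'(z_0)$ is a positive multiple of $-(1+x_1)L_1+d\log(1+x_0)\,(L_1-x_1y)$. Substituting the elimination identity \eqref{eq:elimination} coming from $g''(z_0)=0$ produces a complete cancellation of the term $(1+x_1)L_1$, leaving the positive multiple of $-\tfrac{d}{y}(L_1-x_1y)<0$; the conclusion $g'_{d,\lambda}(z_0)\ge g'_{\Delta-1,\lambda}(z_0)$ then follows from this monotonicity in $d$, with no further estimates and with the ratio $P(x_1)/P(\bar x_1)$ never appearing. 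If you want to keep your direct route, you must actually establish your displayed inequality (using the constraint in a substantive way); otherwise the cleaner fix is to switch to the derivative-in-$d$ computation above.
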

\begin{proof}
We will consider the derivative of $g^\prime$ with respect to $d$ in the points $z_0$ where $g^{\prime\prime}(z_0)=0$.
By \eqref{eq:g double prime}, $g^{\prime\prime}(z_0)$ is a multiple of
$$
\frac{y}{1+y\log(1+x_1)} \cdot \frac{1}{1+x_1} + -d (1+y\log(1+x_0)) \cdot \frac{1+y\log(1+x_1)-x_1y}{(1+x_1)^2(1+y\log(1+x_1))^2}.
$$
As $g^{\prime\prime}(z_0) =0$, we obtain
\[
y(1+x_1)(1+y\log(1+x_1)) = d (1+y\log(1+x_0)) \cdot (1+y\log(1+x_1) - x_1 y).
\]
In particular we get that
\begin{equation}\label{eq:positive}
1 + y \log(1+x_1) - x_1 y > 0,
\end{equation}
and
\begin{equation}\label{eq:elimination}
d\log(1+x_0) = \frac{(1+x_1)(1+y\log(1+x_1))}{1+y\log(1+x_1)-x_1y} - \frac{d}{y}.
\end{equation}
Now notice that by \eqref{eq:g prime} we have that $\frac{\partial}{\partial d} g^\prime$ is a positive multiple of
\[
\frac{-x_1}{(1+x_1)(1+y\log(1+x_1)}+\frac{\partial x_1}{\partial d}\cdot \frac{\partial}{\partial x_1} \left( \frac{1}{1+y\log(1+x_1)} \cdot \frac{-dx_1}{1+x_1}\right),
\]
which by \eqref{eq:partial} is a positive multiple of
$$
- (1+x_1) (1 + y \log(1+x_1)) + d\log(1+x_0) \cdot (1 + y \log(1+x_1) - x_1 y).
$$
When we plug in equation \eqref{eq:elimination} to eliminate $x_0$ from this expression, we note that the term $(1+x_1)(1+y\log(1+x_1))$ cancels and we obtain that $\frac{\partial}{\partial d}g^\prime$ is a positive multiple of
$$
- \frac{d}{y} \left(1 + y\log(1+x_1) - x_1 y\right),
$$
which is negative as observed in \eqref{eq:positive}.

So, we see that as we decrease $d$ the value of $g^\prime(z_0)$ increases and hence it follows that $0\geq g_{d,\lambda}^\prime(z_0) \geq  g_{\Delta-1,\lambda}^\prime(z_0)$, as desired.
\end{proof}

We next compute the derivative of $g^\prime$ with respect to $\lambda$.
Note that $x_1$ depends  on $\lambda$, but $x_0$ does not, hence
$$
\begin{aligned}
\frac{\partial g^\prime}{\partial \lambda} (z_0) = (1+ y \log(1+x_0)) \cdot \frac{\partial}{\partial \lambda} \left(\frac{-dx_1}{(1+x_1)(1+y\log(1+x_1))}\right)\\
= (1+ y \log(1+x_0)) \cdot \frac{\partial x_1}{\partial \lambda} \cdot \frac{\partial}{\partial x_1}\left(\frac{-dx_1}{(1+x_1)(1+y\log(1+x_1))}\right) .
\end{aligned}
$$
Thus $\frac{\partial g^\prime}{\partial \lambda} (z_0) = 0$ if and only if
$$
\frac{\partial}{\partial x_1}\left(\frac{-dx_1}{(1+x_1)(1+y\log(1+x_1))}\right) = 0,
$$
which, by \eqref{eq:partial} is the case if and only if
\begin{equation}\label{eq:second}
x_1 y - (1+y\log(1+x_1)) = 0.
\end{equation}

\begin{lemma}\label{lem:decreasing}
Let $\Delta\geq 5$. For any $\lambda\leq \lambda_\Delta$ and $0\leq d\leq \Delta-1$, we have
$$
x_1 y - (1+y\log(1+x_1))<0.
$$
In particular $g^\prime(z_0)$ is decreasing in $\lambda$ for any $z_0\ge 0$.
\end{lemma}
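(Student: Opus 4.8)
The plan is to reduce the displayed inequality to a single scalar inequality in $\Delta$, settle that by elementary estimates while handling two small values of $\Delta$ separately, and then read off the ``in particular'' clause from the sign of the formula for $\frac{\partial g^\prime}{\partial\lambda}$ displayed just before the lemma.

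First I would note that $x_1=f_{d,\lambda}(x_0)=\lambda/(1+x_0)^d$ satisfies $0\le x_1\le\lambda_\Delta$ whenever $0\le\lambda\le\lambda_\Delta$, $x_0\ge 0$ and $d\ge 0$. After rearranging, $x_1 y-(1+y\log(1+x_1))<0$ is equivalent to
\[
x_1-\log(1+x_1)<\frac1y=2x_\Delta-\log(1+x_\Delta).
\]
Since $t\mapsto t-\log(1+t)$ has derivative $t/(1+t)$, which is positive for $t>0$, this function is strictly increasing on $[0,\infty)$, so it suffices to prove the inequality for the largest admissible value $x_1=\lambda_\Delta$, i.e. to show that
\[
\lambda_\Delta-\log(1+\lambda_\Delta)<2x_\Delta-\log(1+x_\Delta).
\]

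To prove this last inequality I would use the identity $\lambda_\Delta=x_\Delta(1+x_\Delta)^{\Delta-1}$, which is just the fixed point equation for $f_{\Delta-1,\lambda_\Delta}$ (equivalently, it is immediate from $x_\Delta=1/(\Delta-2)$). Bounding the left side via $t-\log(1+t)<t^2/2$ (valid for all $t>0$) and the right side via $\log(1+x_\Delta)<x_\Delta$, it is enough to show $\lambda_\Delta^2/2<x_\Delta$, that is,
\[
(1+x_\Delta)^{2(\Delta-1)}<\frac{2}{x_\Delta}=2(\Delta-2).
\]
Since $(1+\frac1n)^{n+1}$ is decreasing in $n$ (a standard fact) and at most $(6/5)^6<3$ for $n\ge 5$, the left-hand side equals $\big((1+x_\Delta)^{\Delta-1}\big)^2\le(6/5)^{12}<9\le 2(\Delta-2)$ for all $\Delta\ge 7$. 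The two remaining cases $\Delta=5$ and $\Delta=6$ I would verify directly from $\lambda_5=256/243$ and $\lambda_6=3125/4096$.

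For the ``in particular'' part, consider the formula for $\frac{\partial g^\prime}{\partial\lambda}(z_0)$ displayed just before the lemma. The factor $1+y\log(1+x_0)$ is positive (as $x_0\ge 0$ and $y>0$), the factor $\frac{\partial x_1}{\partial\lambda}=(1+x_0)^{-d}$ is positive, and by \eqref{eq:partial} the remaining factor has the same sign as $x_1 y-(1+y\log(1+x_1))$, which we have just shown is negative (for $d=0$ the map $f_{0,\lambda}$ is constant, hence so is $g^\prime$). Thus $\frac{\partial g^\prime}{\partial\lambda}(z_0)\le 0$, so for each fixed $z_0\ge 0$ the quantity $g^\prime(z_0)$ is decreasing in $\lambda$ on $[0,\lambda_\Delta]$.

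The main obstacle is the scalar inequality $\lambda_\Delta-\log(1+\lambda_\Delta)<2x_\Delta-\log(1+x_\Delta)$: it is genuinely false for $\Delta=3$ and $\Delta=4$ — which is exactly why the hypothesis $\Delta\ge 5$ appears — and it is not comfortably slack near $\Delta=5$, so the crude bounds above only cover large $\Delta$ and the borderline cases $\Delta\in\{5,6\}$ have to be checked by explicit numerical estimates.
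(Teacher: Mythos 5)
Your proposal is correct and follows essentially the same route as the paper: reduce by monotonicity of $x_1\mapsto x_1-\log(1+x_1)$ to the single inequality at $x_1=\lambda_\Delta$, then use $t-\log(1+t)\le t^2/2$ and $\log(1+x_\Delta)\le x_\Delta$ to reduce to $\lambda_\Delta^2/2<x_\Delta$, handle large $\Delta$ by an elementary bound on $\lambda_\Delta$, and check the borderline cases numerically (and read the monotonicity in $\lambda$ off the sign of $\partial g'/\partial\lambda$ via \eqref{eq:partial}, exactly as the paper does). The only difference is cosmetic: you bound $(1+x_\Delta)^{2(\Delta-1)}$ using that $(1+\tfrac1n)^{n+1}$ decreases, which settles $\Delta\ge 7$ analytically and leaves $\Delta\in\{5,6\}$ to numerics, whereas the paper uses $\lambda_\Delta\le e(\Delta-1)/(\Delta-2)^2$ for $\Delta\ge 8$ and checks $\Delta\in\{5,6,7\}$ numerically.
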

\begin{proof}
We note that $x_1 y - (1+y\log(1+x_1))$ is increasing in $x_1$ for $x_1>0$.
So it suffices to plug in $\lambda=\lambda_\Delta$ and $x_0=0$, that is, plug in $x_1=\lambda_\Delta$.
Note that this makes it independent of $d$.

Plugging in $x_1=\lambda_\Delta$ we get
\[
\lambda_\Delta y-(1+y\log(1+\lambda_\Delta)=y(\lambda_\Delta-(1/y+\log(1+\lambda_\Delta)),
\]
So as $y>0$ is suffices to show
\begin{equation}\label{eq:no solution}
c(\Delta):=\lambda_\Delta-(1/y+\log(1+\lambda_\Delta)<0.
\end{equation}
By a direct computer calculation, we obtain the following approximate values for $c(\Delta)$ for $\Delta\in\{5,6,7\}$:
\[
\begin{array}{|l|c|c|c|c|}\hline \Delta &5&6&7
\\
\hline
c(\Delta)&-0.0450&-0.0809&-0.0887
\\
\hline
\end{array}
\]
and we conclude that \eqref{eq:no solution} holds for $\Delta\in\{5,6,7\}$.

Using that $x-x^2/2\leq \log(1+x)\leq x$ for all $x\geq 0$, we obtain
\begin{align*}
\lambda_\Delta-(1/y+\log(1+\lambda_\Delta)\leq& \lambda_\Delta-(x_\Delta +\lambda_\Delta-\lambda_\Delta^2/2)
\\
=&\lambda_\Delta^2/2-\frac{1}{\Delta-2}.
\end{align*}
Using that
\[\lambda_\Delta=\frac{\Delta-1}{(\Delta-2)^2}\bigg(\frac{\Delta-1}{\Delta-2}\bigg)^{\Delta-2}\leq \frac{e(\Delta-1)}{(\Delta-2)^2},\]
we obtain that
\begin{equation}\label{eq:bound for dg4}
\lambda_\Delta-(1/y+\log(1+\lambda_\Delta))\leq \frac{e^2(1+\frac{1}{\Delta-2})^2-2(\Delta-2)}{2(\Delta-2)^2}.
\end{equation}
Since the right-hand side of \eqref{eq:bound for dg4} is negative for $\Delta=8$ and since the numerator is clearly decreasing in $\Delta$, we conclude that \eqref{eq:no solution} is true for all $\Delta\geq 8$. This concludes the proof.
\end{proof}

\begin{lemma} \label{lem:case 3,4}
Let $\Delta\in\{3,4\}$.
Let $z_0>0$ and $\lambda_0>0$ be such that
$$
\frac{\partial}{\partial \lambda}g^\prime_{\Delta-1,\lambda}(z_0) = 0
$$
for $\lambda = \lambda_0$. Then $g^\prime_{\Delta-1,\lambda_0}(z_0)\geq -0.92$.
\end{lemma}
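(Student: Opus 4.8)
The plan is to exploit the explicit description of the critical locus already obtained in the paragraph preceding Lemma~\ref{lem:decreasing}. Write $x_0 = \varphi_y^{-1}(z_0)$ and $x_1 = f_{\Delta-1,\lambda_0}(x_0)$, with $y = y_\Delta$. As recorded there, $\frac{\partial}{\partial\lambda}g^\prime_{\Delta-1,\lambda}(z_0)$ vanishes at $\lambda=\lambda_0$ exactly when \eqref{eq:second} holds, i.e.
$$
x_1 y = 1 + y\log(1+x_1).
$$
The first step is to observe that the function $t\mapsto ty-1-y\log(1+t)$ on $[0,\infty)$ equals $-1$ at $t=0$ and has derivative $yt/(1+t)>0$, hence is strictly increasing and has a unique positive zero $x_1^*=x_1^*(\Delta)$, which depends only on $\Delta$ (through $y_\Delta$). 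A direct computation, of the same kind as in the proof of Lemma~\ref{lem:decreasing}, gives $x_1^*\approx 2.58$ for $\Delta=3$ and $x_1^*\approx 1.52$ for $\Delta=4$; to make this rigorous it suffices to evaluate $ty-1-y\log(1+t)$ at a couple of rational endpoints, which yields the crude enclosures $x_1^*\in(2.5,2.6)$ for $\Delta=3$ and $x_1^*>1.5$ for $\Delta=4$.

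Second, I would substitute $1+y\log(1+x_1)=x_1y$ back into the formula~\eqref{eq:g prime} for $g^\prime$. After this substitution a factor $x_1$ cancels and the expression collapses to
$$
g^\prime_{\Delta-1,\lambda_0}(z_0) = -\,\frac{(\Delta-1)\bigl(1+y\log(1+x_0)\bigr)}{y\,(1+x_1^*)},
$$
in which $x_1^*$ is now a fixed constant, so the only dependence on the pair $(\lambda_0,z_0)$ left is through $x_0$. Since $f_{\Delta-1,\lambda_0}(x_0)=x_1^*$ forces $(1+x_0)^{\Delta-1}=\lambda_0/x_1^*$, the quantity $x_0$ is increasing in $\lambda_0$, and since $1+y\log(1+x_0)$ is increasing in $x_0$ (and $y>0$), the right-hand side above is decreasing in $\lambda_0$. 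Hence, using $\lambda_0\le\lambda_\Delta$, the infimum of $g^\prime_{\Delta-1,\lambda_0}(z_0)$ over all pairs in question is attained at $\lambda_0=\lambda_\Delta$, where $1+x_0=(\lambda_\Delta/x_1^*)^{1/(\Delta-1)}$.

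Third, I would simply evaluate this worst case. For $\Delta=3$ one has $\lambda_3=4$ and $1/y_3=2-\log 2$, so $1+x_0=2/\sqrt{x_1^*}$; plugging in $x_1^*>2.5$ (and hence $1+x_1^*>3.5$) into the displayed formula gives $g^\prime_{2,\lambda_0}(z_0)\ge -0.89>-0.92$, while the sharper value $x_1^*\approx 2.58$ gives $\approx -0.85$. For $\Delta=4$ one has $\lambda_4=27/16$ and $1/y_4=1-\log(3/2)$, so $1+x_0=(\lambda_4/x_1^*)^{1/3}$; plugging in $x_1^*>1.5$ gives $g^\prime_{3,\lambda_0}(z_0)\ge -0.77>-0.92$. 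The quantities entering these bounds are monotone in $x_1^*$, so the crude enclosures above are enough.

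I do not foresee a genuine obstacle: the argument is a one-variable reduction followed by two explicit evaluations, entirely in the spirit of Lemma~\ref{lem:decreasing}. The only points that need a little attention are (i) the justification that $\frac{\partial}{\partial\lambda}g^\prime=0$ really forces \eqref{eq:second} — but this is precisely the equivalence already established before Lemma~\ref{lem:decreasing}, which uses that the factors $1+y\log(1+x_0)$ and $\partial x_1/\partial\lambda=(1+x_0)^{-(\Delta-1)}$ are nonzero for $x_0\ge 0$ — and (ii) producing honest rather than merely numerical bounds on the transcendental constant $x_1^*$, which is immediate from the monotonicity of $t\mapsto ty-1-y\log(1+t)$ together with the intermediate value theorem.
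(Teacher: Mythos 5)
Your proposal is correct and follows essentially the same route as the paper's proof: both use the criticality condition \eqref{eq:second} to pin down $x_1$, substitute it into \eqref{eq:g prime} to obtain the collapsed expression \eqref{eq:cases g prime}, reduce to the extreme case $\lambda_0=\lambda_\Delta$ (note that, exactly like the paper, you quietly use $\lambda_0\le\lambda_\Delta$, which is what the sole application in Proposition~\ref{prop:explicit bound dg} supplies), and finish with an explicit numerical evaluation. The only difference is how $x_1$ is bounded from below: the paper derives $x_1\ge \alpha_\Delta x_\Delta$ analytically from \eqref{eq:solution} and \eqref{eq:equation xd}, whereas you enclose the unique root $x_1^*$ of the monotone critical equation via the intermediate value theorem, which is equally rigorous and even yields slightly better constants (about $-0.85$ and $-0.76$ versus the paper's $-0.917$ and $-0.898$).
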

\begin{proof}
By assumption we have $\frac{\partial g^\prime(z_0)}{\partial \lambda} = 0$. Thus \eqref{eq:second} implies that
\begin{equation}\label{eq:solution}
x_1 y = 1 + y \log(1 + x_1),
\end{equation}
This implies that for $x_1$ to be a solution to \eqref{eq:solution}, we need that $x_1\geq x_\Delta$.
Indeed suppose that $x_1<x_\Delta$. Then we have from \eqref{eq:solution} that
\[
x_1y=1+y\log(1+x_1)>1+yx_1-yx_1^2/2,
\]
from which we obtain $yx_1^2>2$.
However, as $y<1/x_\Delta$ we have $yx_1^2<yx_\Delta^2<x_\Delta<2$, a contradiction.

Now \eqref{eq:solution} combined with \eqref{eq:g prime} gives
\begin{equation}\label{eq:cases g prime}
g^\prime(z_0)=\frac{1+y\log(1+x_0)}{yx_1}\cdot \frac{-(\Delta-1)x_1}{1+x_1} = \frac{-(\Delta-1)(1+y\log(1+x_0))}{y(1+x_1)}.
\end{equation}
Now recall that $y=y_\Delta$ satisfies
\begin{equation}
2x_\Delta y = 1+y\log(1+x_\Delta)  .\label{eq:equation xd}
\end{equation}
Now using that $x_1\geq x_\Delta$ and by combining \eqref{eq:solution} and \eqref{eq:equation xd} we obtain
\[
x_1=\frac{1+y\log(1+x_1)}{y}=2x_\Delta\left(\frac{1+y\log(1+x_1)}{1+y\log(1+x_\Delta)}\right)\geq 2x_\Delta.
\]
Using this we obtain
\begin{align*}
x_1=&2x_\Delta\left(\frac{1+y\log(1+x_1)}{1+y\log(1+x_\Delta)}\right)\geq 2x_\Delta\left(\frac{1+y\log(1+2x_\Delta)}{1+y\log(1+x_\Delta)}\right)
\\
=&2x_\Delta\left( \frac{1+\frac{\log(1+2x_\Delta)}{2x_\Delta-\log(1+x_\Delta)}}{1+\frac{\log(1+x_\Delta)}{2x_\Delta-\log(1+x_\Delta)}}\right)=2x_\Delta+\log\left(\frac{1+2x_\Delta}{1+x_\Delta}\right)=\alpha_\Delta x_\Delta,
\end{align*}
where $\alpha_3=2+\log(3/2)\approx 2.405$, and where $\alpha_4=2+2\log(4/3))\approx 2.575$.
This then implies that
\[
1+x_0\leq (\lambda_\Delta/x_1)^{1/(\Delta-1)}\leq \alpha_\Delta^{-1/(\Delta-1)}(1+x_\Delta).
\]
Since \eqref{eq:cases g prime} is decreasing in $x_0$ and increasing in $x_1$, we can plug in $x_0=\alpha_\Delta^{-1/(\Delta-1)}(1+x_\Delta)$ and $x_1=\alpha_\Delta x_\Delta$ to obtain
$$
\begin{aligned}
g_{\Delta-1,\lambda_0}^\prime(z_0) > & \frac{-(\Delta-1)(1+y\log(\alpha_\Delta^{-1/(\Delta-1)}(1+x_\Delta))}{y(1+\alpha_\Delta x_\Delta)}
=
\frac{-2(\Delta-1)x_\Delta y + y\log (\alpha_\Delta)}{y(1+\alpha_\Delta x_\Delta)}
\\
 = &\frac{-2(\Delta-1)/(\Delta-2) + \log(\alpha_\Delta)}{(\Delta-2+\alpha_\Delta)/(\Delta-2)}
\approx\left\{ \begin{array}{ll} -0.9168\text{ if } \Delta=3,
																														 \\ -0.8979\text{ if } \Delta=4.
																														 \end{array}\right.
\end{aligned}
$$
This finishes the proof.
\end{proof}

We can now finally show that the coordinate changes works for all values of the parameters we are interested in.

\begin{prop} \label{prop:explicit bound dg}
Let $\Delta\geq 3$ and let $\eps>0$. Then there exists $\delta>0$ such that if $0\leq \lambda<(1-\eps)\lambda_\Delta$, then $|g^\prime_{d,\lambda}(z_0)|<1-\delta$ for all $z_0\geq 0$ and $d\in \{0,1,\ldots,\Delta-1\}$.
\end{prop}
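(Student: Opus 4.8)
The plan is to reduce the claim, for each fixed $\lambda$, from all degrees $d$ to the single degree $d=\Delta-1$, and then for $d=\Delta-1$ to pass from the full parameter range to a compact set on which the bound is read off from the lemmas above. First observe that, since $\lambda\ge 0$ and $f_{d,\lambda}$ maps $[0,\infty)$ into itself, \eqref{eq:g prime} gives $g'_{d,\lambda}(z_0)\le 0$ for all $z_0\ge 0$; hence $|g'_{d,\lambda}(z_0)|=-g'_{d,\lambda}(z_0)$ and it suffices to produce $\delta>0$ with $g'_{d,\lambda}(z_0)>-1+\delta$ for all $d\in\{0,1,\ldots,\Delta-1\}$, $\lambda\in[0,(1-\eps)\lambda_\Delta]$ and $z_0\ge 0$. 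As in the proof of Corollary~\ref{cor:g prime is bounded}, $g'_{d,\lambda}(z_0)\to 0$ as $z_0\to\infty$, so each $g'_{d,\lambda}$ extends continuously to $[0,\infty]$ and attains its minimum over $z_0$ at $z_0=0$, at $z_0=\infty$ (where it equals $0$), or at an interior point where $g''_{d,\lambda}=0$. At such an interior point Lemma~\ref{lem:degree} gives $g'_{d,\lambda}(z_0)\ge g'_{\Delta-1,\lambda}(z_0)$; at $z_0=0$, evaluating \eqref{eq:g prime} gives $g'_{d,\lambda}(0)=-d\lambda/\bigl((1+\lambda)(1+y\log(1+\lambda))\bigr)$, whose absolute value is nondecreasing in $d$, so $g'_{d,\lambda}(0)\ge g'_{\Delta-1,\lambda}(0)$; and $0\ge g'_{\Delta-1,\lambda}(z_0)$ trivially handles the endpoint $\infty$. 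Thus $\min_{z_0\ge 0}g'_{d,\lambda}(z_0)\ge\min_{z_0\ge 0}g'_{\Delta-1,\lambda}(z_0)$ for every $\lambda$, and it remains to bound $g'_{\Delta-1,\lambda}(z_0)$ below, uniformly in $\lambda\in[0,(1-\eps)\lambda_\Delta]$ and $z_0\ge 0$.

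For $\Delta\ge 5$ this is quick. By Lemma~\ref{lem:decreasing}, $\lambda\mapsto g'_{\Delta-1,\lambda}(z_0)$ is strictly decreasing on $[0,\lambda_\Delta]$ for each finite $z_0$ (its $\lambda$-derivative being a positive multiple of $x_1y-(1+y\log(1+x_1))$, which is negative by that lemma), so $g'_{\Delta-1,\lambda}(z_0)\ge g'_{\Delta-1,(1-\eps)\lambda_\Delta}(z_0)>g'_{\Delta-1,\lambda_\Delta}(z_0)\ge -1$, the last inequality by Corollary~\ref{cor:g prime is bounded}. Since $z_0\mapsto g'_{\Delta-1,(1-\eps)\lambda_\Delta}(z_0)$ is continuous on the compact set $[0,\infty]$ and everywhere strictly above $-1$, its minimum is $-1+\delta$ for some $\delta>0$.

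The case $\Delta\in\{3,4\}$ is the hard part, as there is no monotonicity in $\lambda$. I would first prove the non-strict bound $g'_{\Delta-1,\lambda}(z_0)\ge -1$ on the compact box $B=[0,\infty]\times[0,\lambda_\Delta]$, writing $G(z_0,\lambda)=g'_{\Delta-1,\lambda}(z_0)$. Since $G\le 0$ on $B$ and $G\not\equiv 0$ (for instance $G(0,\lambda_\Delta)<0$), the minimum of $G$ over $B$ is negative, hence not attained where $G=0$, in particular not at $\lambda=0$ nor at $z_0=\infty$. If it is attained at a point with $\lambda=\lambda_\Delta$, then Corollary~\ref{cor:g prime is bounded} gives value $\ge -1$. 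Otherwise it is attained at a point with $\lambda\in(0,\lambda_\Delta)$, where $\partial_\lambda G=0$; then either $z_0>0$, in which case Lemma~\ref{lem:case 3,4} gives value $\ge -0.92$, or $z_0=0$, in which case $\partial_\lambda G=0$ reduces through \eqref{eq:second} to $\lambda y=1+y\log(1+\lambda)$, so \eqref{eq:cases g prime} at $x_0=0$, $x_1=\lambda$ gives value $-(\Delta-1)/\bigl(y(1+\lambda)\bigr)$, and the bound $\lambda\ge\alpha_\Delta x_\Delta$ extracted from the proof of Lemma~\ref{lem:case 3,4}, together with the explicit values of $y_\Delta$, $x_\Delta$ and $\alpha_\Delta$, makes this $>-1$ by a short numerical check. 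Hence $\min_B G\ge -1$, with equality only possible at points where $\lambda=\lambda_\Delta$. Consequently $G(z_0,\lambda)>-1$ at every $(z_0,\lambda)$ with $\lambda\le(1-\eps)\lambda_\Delta$ --- such a point has $\lambda<\lambda_\Delta$ and so cannot realise the value $\min_B G$ when that value equals $-1$ --- and by compactness of $[0,\infty]\times[0,(1-\eps)\lambda_\Delta]$ the minimum of $G$ over it is $-1+\delta$ for some $\delta>0$.

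Combining the three cases with the reduction in $d$, and replacing $\delta$ by $\delta/2$ to turn the inequality $\ge$ into a strict $>$, completes the proof. The main obstacle is exactly this last case: the family $g'_{d,\lambda}$ lives over a noncompact parameter set and, for $\Delta\in\{3,4\}$, fails to be monotone in $\lambda$, so a uniform gap cannot simply be read off a single slice; the device that makes it work is to establish the closed inequality $G\ge -1$ on all of $[0,\infty]\times[0,\lambda_\Delta]$ first and then observe that equality is confined to the slice $\lambda=\lambda_\Delta$, which upgrades Corollary~\ref{cor:g prime is bounded} and Lemmas~\ref{lem:degree}, \ref{lem:decreasing} and \ref{lem:case 3,4} to the strict, uniform bound required.
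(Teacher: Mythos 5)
Your proof is correct and follows essentially the same route as the paper: reduce to $d=\Delta-1$ via Lemma~\ref{lem:degree} at interior critical points of $g'$, use monotonicity in $\lambda$ (Lemma~\ref{lem:decreasing}) for $\Delta\ge 5$, Lemma~\ref{lem:case 3,4} together with \eqref{eq:second} and \eqref{eq:cases g prime} for $\Delta\in\{3,4\}$, invoke Corollary~\ref{cor:g prime is bounded} on the slice $\lambda=\lambda_\Delta$, and finish by compactness. The only real difference is organizational: you handle the endpoint $z_0=0$ explicitly (via the formula for $g'_{d,\lambda}(0)$ and the sub-case where \eqref{eq:second} holds with $x_0=0$), whereas the paper excludes it by asserting $g''(0)<0$; your version is, if anything, slightly more careful at that point.
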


\begin{proof}
Let $J=[0,(1-\eps)\lambda_\Delta]$ and let
\[
M:=\min_{z_0\geq 0,\lambda\in J,d=0,\ldots, \Delta-1} g^{\prime}_{d,\lambda}(z_0).
\]
As for any $\lambda\in J$ we have that $\lim_{z_0\to \infty}g^\prime_{d,\lambda}(z_0)=0$ and as $g^{\prime\prime}(0)<0$ by the proof of Corollary~\ref{cor:g prime is bounded} (which remains valid as \eqref{eq:g'(0)<1} is decreasing in $d$) it follows that we may assume that $M$ is attained at some triple $(z_0,\lambda_0,d)$ with $z_0>0$, $\lambda_0\in J$ and $d\in \{0,\ldots,\Delta-1\}$.
This then implies that $g_{d,\lambda_0}^{\prime\prime}(z_0)=0$ and hence by Lemma~\ref{lem:degree} we know that $g^{\prime}_{d,\lambda_0}(z_0)\geq g^{\prime}_{\Delta-1,\lambda_0}(z_0)$,
that is, we have that $d=\Delta-1$.

If $g^\prime_{d,\lambda_0}(z_0)$ attains its minimum (as a function of $\lambda$) at some $\lambda<\lambda_\Delta$, then $\frac{\partial}{\partial \lambda} g^{\prime}(z_0)=0$. So by Lemma~\ref{lem:decreasing} we know that $\Delta\in \{3,4\}$.
Then Lemma~\ref{lem:case 3,4} implies that  $M\geq -0.92$.
So we may assume that $g^{\prime}$ is strictly decreasing as a function of $\lambda$ on $[0,\lambda_\Delta]$.
This then implies that $\lambda_0=(1-\eps)\lambda_\Delta$ and so there exists $\delta>0$ (and we may assume $\delta<0.08$)
such that
\[
M=g_{d,\lambda_0}^\prime(z_0)> (1-\delta) g_{d,\lambda_\Delta}^\prime(z_0)\geq -1+\delta,
\]
where the last inequality is by Corollary~\ref{cor:g prime is bounded}.
This finishes the proof.
\end{proof}

\section{Proof of Theorem~\ref{thm:main}}\label{sec:proof}

Our proof will essentially follow the same pattern as the proof of Theorem~\ref{thm:main1}, but instead of working with the function $F_{d,\lambda}$ we now need to work with a conjugation of $F_{d,\lambda}$.
Let $\Delta\geq 3$.
Recall from the previous section the function $\varphi:\R_{+}\to \R_{+}$ defined by $z = \varphi(x) = \log(1+y\log(1+x))$, with $y=y_\Delta$.
We now extend the function $\varphi$ to a neighborhood $V \subset \C$ of $\R_+$ by taking the branch for both logarithms that is real for $x >0$. By making $V$ sufficiently small we can guarantee that $\varphi$ is invertible. Now define for $d=0,\ldots, \Delta-1$, the map $G_{d,\lambda}: \varphi(V)^d \to \widehat \C$ by
\[
(z_1,\ldots,z_d)\mapsto \varphi\left(\frac{\lambda}{\prod_{i=1}^d 1+\varphi^{-1}(z_i)}\right).
\]
For a set $A\subset \C$ and $\eps>0$ we write $\mathcal{N}(A,\eps):=\{z\in \C\mid |z-a|<\eps \text{ for some }a\in A\}$.
Now define for $\eps>0$ the set $D(\eps)\subset \C$ by
\[
D(\eps):=\mathcal{N}([0,\varphi(\lambda_\Delta)],\eps).
\]
We collect a very useful property:
\begin{lemma}\label{lem:invariant domain}
Let $\Delta\geq 3$ and let $\eps>0$.
Then there exist $\eps_1,\eps_2>0$ such that for any $\lambda\in \Lambda(\eps_2):=
\change{\mathcal{N}([0,(1-\eps)\lambda_\Delta ],\eps_2)}$, any $d=0,\ldots,\Delta-1$ and $z_1,\ldots,z_d\in D(\eps_1)$ we have $G_{d,\lambda}(z_1,\ldots,z_d)\in D(\eps_1)$.
\end{lemma}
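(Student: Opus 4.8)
The plan is to leverage Proposition~\ref{prop:explicit bound dg}, which tells us that in the $z$-coordinates the single-variable maps $g_{d,\lambda}=\varphi\circ f_{d,\lambda}\circ\varphi^{-1}$ are uniform strict contractions (factor $1-\delta$) on the real ray $[0,\infty)$ for all $d\le\Delta-1$ and all $\lambda\in[0,(1-\eps)\lambda_\Delta]$. Since $G_{d,\lambda}(z_1,\dots,z_d)=\varphi\bigl(F_{d,\lambda}(\varphi^{-1}(z_1),\dots,\varphi^{-1}(z_d))\bigr)$ and $F_{d,\lambda}(x,\dots,x)=f_{d,\lambda}(x)$, the map $G_{d,\lambda}$ restricted to the diagonal is exactly $g_{d,\lambda}$; in particular the fixed point $z_\Delta=\varphi(\lambda_\Delta)$ of $g_{\Delta-1,\lambda_\Delta}$ lies in $[0,\varphi(\lambda_\Delta)]$, and more generally $g_{d,\lambda}$ maps $[0,\varphi(\lambda_\Delta)]$ strictly inside itself with room to spare (the image being contained in a subinterval bounded away from the endpoint $\varphi(\lambda_\Delta)$, by the contraction estimate applied around the relevant fixed point, which also lies in $[0,\varphi(\lambda_\Delta))$). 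So first I would record that for real arguments there is some $\eta>0$ with $g_{d,\lambda}\bigl([0,\varphi(\lambda_\Delta)]\bigr)\subseteq[0,\varphi(\lambda_\Delta)-\eta]$ for all admissible $d,\lambda$ — this uses that each $g_{d,\lambda}$ has an attracting fixed point in $[0,\varphi(\lambda_\Delta))$ toward which it contracts, together with compactness of the parameter set $[0,(1-\eps)\lambda_\Delta]\times\{0,\dots,\Delta-1\}$.

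The second step is a continuity/compactness argument to pass from the real picture to a complex neighborhood. The set $\overline{D(\eps_1)}^{\,d}\times\overline{\Lambda(\eps_2)}$ is compact, and $(z_1,\dots,z_d,\lambda)\mapsto G_{d,\lambda}(z_1,\dots,z_d)$ is continuous (indeed holomorphic in each variable) on a neighborhood of the real locus, once $V$ is chosen small enough that $\varphi^{-1}$ is defined and $1+\varphi^{-1}(z_i)$ stays away from $0$. On the real core — i.e. $z_i\in[0,\varphi(\lambda_\Delta)]$, $\lambda\in[0,(1-\eps)\lambda_\Delta]$ — we have $G_{d,\lambda}(z_1,\dots,z_d)\in[0,\varphi(\lambda_\Delta)-\eta]$. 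Wait — that last assertion needs care, because $G_{d,\lambda}$ off the diagonal is not literally $g_{d,\lambda}$. The way around this is monotonicity: for real $x_i\ge 0$ the quantity $\prod_i(1+x_i)$ is minimized and maximized at the diagonal of the box $[\min x_i,\max x_i]^d$, and $\varphi$ is increasing, so $G_{d,\lambda}(z_1,\dots,z_d)$ for real $z_i\in[0,\varphi(\lambda_\Delta)]$ lies between $g_{d,\lambda}(0)$ and $g_{d,\lambda}(\varphi(\lambda_\Delta))$, hence in $[0,\varphi(\lambda_\Delta)-\eta]$ by Step~1 (here I also use $g_{d,\lambda}(0)\ge 0$, since $\lambda\ge 0$). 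Thus on the real core $G_{d,\lambda}$ takes values at distance $\ge\eta$ from the complement of $D(\eps_1)$ provided $\eps_1<\eta$.

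The third step finishes by uniform continuity: choosing $\eps_1$ first so small that $\varphi^{-1}$ behaves well on $D(\eps_1)$ and that $\eps_1<\eta/2$, the map $G_{d,\lambda}$ is uniformly continuous on the compact set $\overline{D(\eps_1)}^{\,d}\times\overline{\Lambda(\eps_2)}$ with a modulus independent of $d$ (finitely many values) and of the specific point; hence for $\eps_1,\eps_2$ small enough, any $(z_1,\dots,z_d,\lambda)$ in this set is within $\eta/2$ of a point of the real core, so $G_{d,\lambda}(z_1,\dots,z_d)$ is within $\eta/2$ of $[0,\varphi(\lambda_\Delta)-\eta/2]\subseteq D(\eps_1)$, giving $G_{d,\lambda}(z_1,\dots,z_d)\in D(\eps_1)$. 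Formally one picks $\eps_1$ and then $\eps_2$ in that order, shrinking $\eps_1$ again if necessary so that $D(\eps_1)\subset\varphi(V)$. I expect the main obstacle to be Step~2: one must be slightly careful that $G_{d,\lambda}$ off the diagonal still lands in the interval — the monotonicity observation above handles it cleanly for real arguments, but verifying that the requisite branch of $\varphi$ (and of $\varphi^{-1}$) is holomorphic and that $1+\varphi^{-1}(z)\ne 0$ throughout $D(\eps_1)$ is where the choice of the neighborhood $V$ has to be pinned down.
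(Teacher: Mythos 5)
Your reduction to the real core and the monotonicity observation for real arguments are fine, but the final step --- passing from the real core to complex arguments by uniform continuity --- is a genuine gap, and it is exactly the point the lemma turns on. The target set $D(\eps_1)$ shrinks at the same rate as the allowed perturbation of the inputs: you must show that when each $z_i$ is moved a distance up to $\eps_1$ off the segment $[0,\varphi(\lambda_\Delta)]$ (and $\lambda$ up to $\eps_2$ off $[0,(1-\eps)\lambda_\Delta]$), the output moves by \emph{less than} $\eps_1$. Uniform continuity only provides a modulus $\omega$ with $\omega(t)\to 0$, not $\omega(\eps_1)<\eps_1$; if the relevant Lipschitz constant were, say, $2$, the image of $D(\eps_1)$ would fail to lie in $D(\eps_1)$ no matter how small $\eps_1$ is chosen. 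Your buffer $\eta$ does not rescue this: it protects only against overshooting the right endpoint along the real axis, whereas the danger is drift in the imaginary direction, and the inclusion you assert --- that the $\eta/2$-neighborhood of $[0,\varphi(\lambda_\Delta)-\eta/2]$ is contained in $D(\eps_1)$ --- is false once $\eps_1<\eta/2$ (a point at height $\eta/3$ above the segment lies in the former but not in the latter).

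What is needed, and what the paper does, is to keep the contraction itself in play for complex arguments: by continuity of $g^\prime_{d,\lambda}$ in $(z,\lambda)$, the bound $|g^\prime_{d,\lambda}|<1-\delta$ of Proposition~\ref{prop:explicit bound dg} persists as $|g^\prime_{d,\lambda}|\le 1-\delta/2$ on $D(\eps_1)\times\Lambda(\eps_2)$, and $\eps_2$ is then chosen so small relative to $\delta\eps_1$ that the $\lambda$-variation contributes at most $\delta\eps_1/2$; together these give $|g_{d,\lambda}(z)-g_{d,\lambda'}(z')|<\eps_1$ for a nearby real pair $(z',\lambda')$, which is precisely the quantitative statement your uniform-continuity step cannot supply. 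Moreover, your monotonicity device for unequal arguments works only for real $z_i$; for complex $z_1,\ldots,z_d\in D(\eps_1)$ you need a reduction to the diagonal with the same quantitative strength. The paper observes that $1+\varphi^{-1}(z_i)=\exp\bigl((e^{z_i}-1)/y\bigr)$, so that $G_{d,\lambda}(z_1,\ldots,z_d)=g_{d,\lambda}(z)$ as soon as $\frac{1}{d}\sum_i e^{z_i}=e^{z}$ for some $z\in D(\eps_1)$, and such a $z$ exists because $\exp(D(\eps_1))$ is convex. Your proposal has no substitute for this step off the real locus, so as written the argument does not close; it could be repaired by importing both the convexity reduction (or an equivalent derivative computation at non-diagonal points) and the derivative-based estimate in place of uniform continuity.
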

\begin{proof}
We first prove this for the special case that $z_1=z_2=\ldots=z_d=z$.
In this case we have $G_{d,\lambda}(z_1,\ldots,z_d)=g_{d,\lambda}(z)$.
By Proposition~\ref{prop:explicit bound dg} we know that there exists $\delta>0$ such that for any $d=0,\ldots,\Delta-1$ we have
\[
|g^\prime_{d,\lambda}(z)|<1-\delta \text{ for all } \lambda\in [0,(1-\eps)\lambda_{\Delta}] \text{ and } z\in [0,\varphi(\lambda_\Delta)].\]
By continuity of $g^\prime$ as a function of $z$ and $\lambda$ there exists $\eps_1,\eps_2>0$ such that for all $d=0,\ldots,\Delta-1$ and each $(z,\lambda)\in D(\eps_1)\times \change{\Lambda(\eps_2)}$
we have
\[
|g_{d,\lambda}^\prime(z)|\leq 1-\delta/2.
\]
We may assume that $\eps_2$ is small enough so that for any $d$,
\[
\sup_{\lambda\in \Lambda\change{(\eps_2)},z\in [-\eps_1,\varphi (\lambda_\Delta)]}\left|\frac{\partial}{\partial \lambda}g_{d,\lambda}(z)\right|\leq \frac{\delta \eps_1}{2\eps_2},
\]
Fix now $\lambda\in \Lambda\change{(\eps_2)}$ and $d$ and let $z\in D(\eps_1)$.
Let $z'\in [0,\varphi(\lambda_\Delta)]$ be such that $|z-z'|< \eps_1$ \change{and let $\lambda'\in [0,(1-\eps)\lambda_\Delta))$ be such that $|\lambda-\lambda'|<\eps_2$}
Then
\begin{align*}
|g_{d,\lambda}(z)-g_{d,\lambda'}(z')|\leq &|g_{d,\lambda}(z)-g_{d,\lambda}(z')|+|g_{d,\lambda}(z')-g_{d,\lambda'}(z')|
\\
< & (1-\delta/2)\eps_1+\eps_1\delta/2<\eps_1,
\end{align*}
implying that the distance of $g_{d,\lambda}(z)$ to $[0,\varphi(\lambda_\Delta)]$ is at most $\eps_1$, as $g_{d,\lambda'}(z')\in [0,\varphi(\lambda_\Delta)]$. Hence $g_{d,\lambda}(z)\in D(\eps_1)$, which proves the lemma for $z_1=z_2=\ldots=z_d=z$.

For the general case fix $d$, let $\lambda\in \Lambda(\change{\eps_2})$ and consider $x=\prod_{i=1}^d (1+\varphi^{-1}(z_i))$ for certain $z_i\in D=D(\eps_1)$.
We want to show that $x=\prod_{i=1}^d (1+\varphi^{-1}(z)) = (1+\varphi^{-1}(z))^d$ for some $z\in D$.
First of all note that
\[1+\varphi^{-1}(z_i)= \exp\left(\frac{\exp(z_i)-1}{y}\right).\]
Then
\[x=\exp\left( \sum_{i=1}^d \left(\frac{\exp(z_i)-1}{y}\right)\right),\]
which is equal to $(1+\varphi^{-1}(z))^d$ for some $z\in D$ provided
\begin{equation}
\frac{1}{d}\sum_{i=1}^d \exp(z_i)=\exp(z),\label{eq:condition}
\end{equation}
for some $z\in D$. \change{Consider the image of $D$ under the exponential map. $D$ is a smoothly bounded domain whose boundary consist of two arbitrarily small half-circles and two parallel horizontal intervals. Recall that the exponential imagine of a disk of radius less than $1$ is strictly convex, a fact that can easily be checked by computing that the curvature of its boundary has constant sign. Therefore $\exp(D)$ is a smoothly bounded domain whose boundary consists of two radial intervals and two strictly convex curves, hence $\exp(D)$ must also be convex. See Figure \ref{fig:convexity} for a sketch of the domain $D$ and its image under the exponential map.} 
It follows that the convex combination $\frac{1}{d}\sum_{i=1}^d \exp(z_i)$ is contained in the image of $D$.
In other words, there exists $z\in D$ such that \eqref{eq:condition} is satisfied.
This now implies that $G_{d,\lambda}(z_1,\ldots,z_d)=g_{d,\lambda}(z)\in D$, as desired.
\end{proof}

\begin{figure}
\label{fig:convexity}
\hspace{.4in} \includegraphics[width=5in]{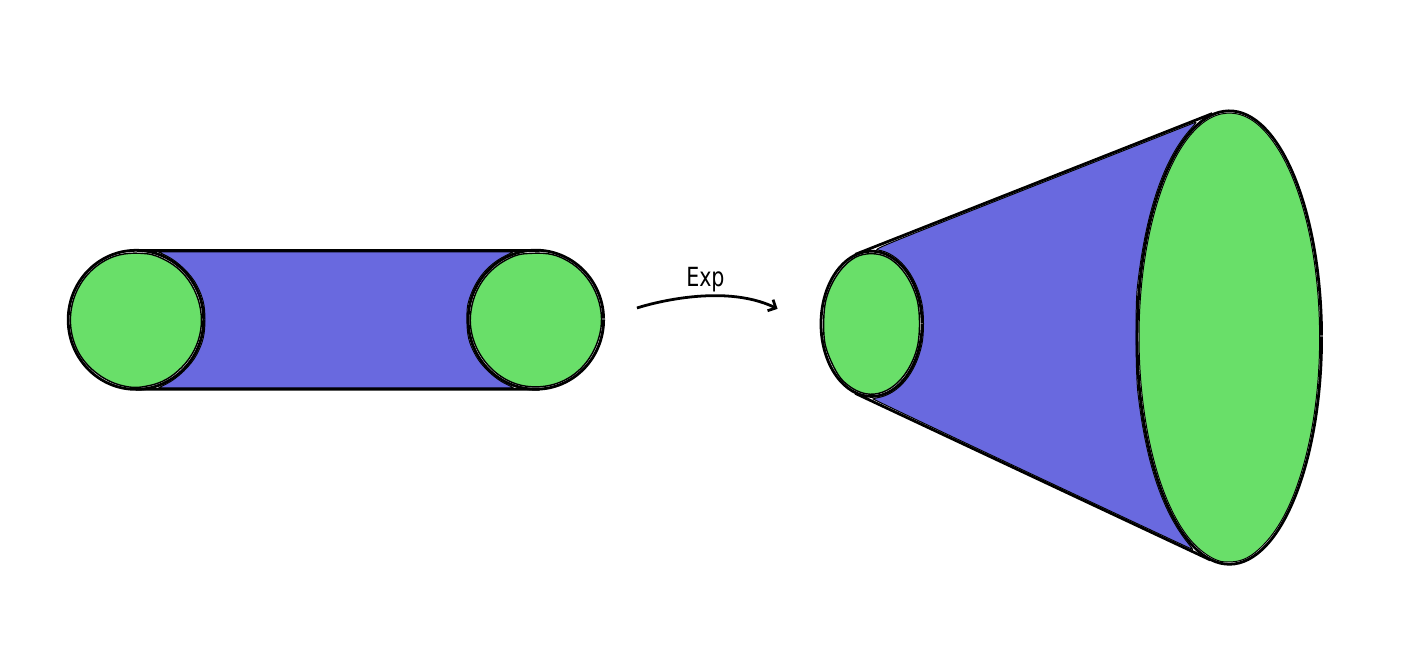}
\caption{The set $D$ and its image under the exponential map.}
\end{figure}

\subsection{Proof of Theorem~\ref{thm:main}}
We first state and prove a more precise version of Theorem~\ref{thm:main} for the multivariate independence polynomial:
\begin{theorem}\label{thm:precise}
Let $\Delta \in \mathbb N $ with $\Delta \geq 3$. Then for any $\eps>0$ there exists $\delta>0$ such \change{that} for any graph $G=(V,E)$ of maximum degree at most $\Delta$ and any $\lambda\in \C^V$ satisfying $\lambda_v \in \mathcal{N}([0,(1-\eps)\lambda_\Delta),\delta)$ for each $v\in V$, we have that $Z_G( \lambda)\neq 0$.
\end{theorem}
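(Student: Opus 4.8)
The plan is to mimic the induction used in the proof of Theorem~\ref{thm:main1}, but carried out in the $z$-coordinates via $\varphi$, using the forward-invariant domain $D(\eps_1)$ supplied by Lemma~\ref{lem:invariant domain} in place of the ad hoc sector used there. First, apply Lemma~\ref{lem:invariant domain} with the given $\eps$ to obtain $\eps_1,\eps_2>0$ so that $G_{d,\lambda}$ maps $D(\eps_1)^d$ into $D(\eps_1)$ for every $d\in\{0,\dots,\Delta-1\}$ and every $\lambda\in\Lambda(\eps_2)=\mathcal N([0,(1-\eps)\lambda_\Delta],\eps_2)$. Set $\delta:=\eps_2$. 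Since the independence polynomial is multiplicative over disjoint unions and $\lambda_v$ with $\lambda_v=0$ do not affect anything, we may assume $G$ is connected. Fix a root $v_0\in V$; we will run an induction over subsets $U\subseteq V\setminus\{v_0\}$, ordered by inclusion, establishing that for every such $U$,
\begin{itemize}
\item[(i)] $Z_{G[U]}(\lambda)\neq 0$;
\item[(ii)] for every $u\in U$ having a neighbor in $V\setminus U$, one has $Z_{G[U]-u}(\lambda)\neq 0$, $R_{G[U],u}\neq -1$, and $\varphi(R_{G[U],u})\in D(\eps_1)$ — more precisely $1+R_{G[U],u}$ lies in the range where $\varphi$ is defined and invertible, i.e. in $\varphi^{-1}(D(\eps_1))$.
\end{itemize}
The base case $U=\emptyset$ is vacuous. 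For the inductive step, take $U$, let $H=G[U]$, pick $u_0\in U$ with a neighbor outside $U$ (exists by connectivity), let $u_1,\dots,u_d$ be its neighbors in $H$ with $d\leq\Delta-1$, and form $H_0=H-u_0$, $H_i=H_{i-1}-u_i$ as in Lemma~\ref{lem:weitz}. Each $u_i$ (for $i\geq 1$) has a neighbor in $V\setminus(U\setminus\{u_0,\dots,u_{i-1}\})$, so the inductive hypothesis applied to the smaller set $U\setminus\{u_0,\dots,u_{i-1}\}$... — here is the delicate bookkeeping point — gives that $Z_{H_{i-1}}(\lambda)\neq0$ and that $z_i:=\varphi(R_{H_{i-1},u_i})\in D(\eps_1)$. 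By Lemma~\ref{lem:weitz}, $R_{H,u_0}=F_{d,\lambda}(R_{H_0,u_1},\dots)$, equivalently $\varphi(R_{H,u_0})=G_{d,\lambda}(z_1,\dots,z_d)$, and Lemma~\ref{lem:invariant domain} puts this value in $D(\eps_1)$; in particular, since $-1\notin\varphi^{-1}(D(\eps_1))$ (as $\varphi^{-1}(D(\eps_1))$ is a small neighborhood of $[0,\lambda_\Delta]$, and one checks $-1$ is bounded away from it for $\eps_1$ small), we get $R_{H,u_0}\neq -1$, hence $Z_H(\lambda)\neq 0$ by~\eqref{eq:no zero not -1}. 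This gives (i); (ii) for the remaining boundary vertices of $U$ follows by the same argument applied with those vertices as the chosen $u_0$, or is inherited from the hypothesis.

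Finally, to conclude $Z_G(\lambda)\neq 0$ itself: let $v_1,\dots,v_d$ be the neighbors of $v_0$, form $G_0=G-v_0$, $G_i=G_{i-1}-v_i$. Each $v_i$ has a neighbor in the relevant induced subgraph lying outside it once $v_0$ (and $v_1,\dots,v_{i-1}$) are deleted, so (i),(ii) give $Z_{G_i}(\lambda)\neq0$ and $\varphi(R_{G_{i-1},v_i})\in D(\eps_1)$; applying Lemma~\ref{lem:weitz} and then Lemma~\ref{lem:invariant domain} once more shows $\varphi(R_{G,v_0})\in D(\eps_1)$, so $R_{G,v_0}\neq-1$ and $Z_G(\lambda)\neq0$ by~\eqref{eq:no zero not -1}. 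Theorem~\ref{thm:main} follows by taking $\lambda_v=\lambda$ real and noting $\bigcup_{\eps>0}\mathcal N([0,(1-\eps)\lambda_\Delta),\delta(\eps))$ is an open neighborhood of $[0,\lambda_\Delta)$.

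The main obstacle, I expect, is not any single estimate — those are all delivered by Lemma~\ref{lem:invariant domain} — but rather making the induction statement precise enough that it is genuinely self-feeding: the ratio $R_{G[U],u}$ is only defined when the denominator $Z_{G[U]-u}(\lambda)$ is nonzero, so (i) alone is not a strong enough inductive hypothesis, and one must carry the nonvanishing of the intermediate graphs $H_i$ and the membership $\varphi(R_{H_{i-1},u_i})\in D(\eps_1)$ along explicitly, exactly as in Theorem~\ref{thm:main1}. A secondary subtlety is that $\varphi$ was only defined on a fixed neighborhood $V\subset\C$ of $\R_+$ and one must check that $1+R$-values arising in the recursion stay inside $\varphi^{-1}(D(\eps_1))\subseteq V$ — this is exactly what invariance of $D(\eps_1)$ under $G_{d,\lambda}$ guarantees, but it must be invoked at the right place. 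One should also double-check that the chosen $u_0$ at each step can be taken to be a vertex with a neighbor in $V\setminus U$ (not merely outside the current $H_i$), which is what keeps hypothesis (ii) applicable down the recursion.
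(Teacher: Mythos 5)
Your overall strategy is the paper's: the same induction over subsets $U\subseteq V\setminus\{v_0\}$, carrying (i) nonvanishing of $Z_{G[U]}$ and (ii) $\varphi(R_{G[U],u})\in D(\eps_1)$ for vertices $u$ with a neighbour outside $U$, fed by Lemma~\ref{lem:weitz} and the invariance of $D(\eps_1)$ from Lemma~\ref{lem:invariant domain}, with $\delta=\eps_2$; and your bookkeeping about which smaller subset the hypothesis is applied to (each $u_i$ has the neighbour $u_0$ outside $U\setminus\{u_0,\dots,u_{i-1}\}$) is exactly right. Up to and including the inductive step the proposal matches the paper.

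There is, however, a genuine gap in your final step. You conclude $\varphi(R_{G,v_0})\in D(\eps_1)$ by ``applying Lemma~\ref{lem:invariant domain} once more'', but the root $v_0$ is the one vertex whose degree can be the full $\Delta$, whereas Lemma~\ref{lem:invariant domain} (and Proposition~\ref{prop:explicit bound dg} behind it) only covers $d\in\{0,1,\dots,\Delta-1\}$ factors. The invariance genuinely fails for $d=\Delta$ when $\lambda$ is close to $\lambda_\Delta$, since $\lambda_\Delta$ exceeds the corresponding critical value $\Delta^\Delta/(\Delta-1)^{\Delta+1}$ for branching number $\Delta$; so the claim $\varphi(R_{G,v_0})\in D(\eps_1)$ is unjustified (and in general false), and only the weaker statement $R_{G,v_0}\neq -1$ is available and needed. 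The paper closes exactly this point with a separate argument: writing $z_i=R_{G_{i-1},v_i}$, it applies the lemma to only $d-1\le\Delta-1$ factors to get $R_{G,v_0}(1+z_d)=\lambda_{v_0}/\prod_{i=1}^{d-1}(1+z_i)\in\varphi^{-1}(D)$, and then observes that $R_{G,v_0}=-1$ would force $-1\in z_d+\varphi^{-1}(D)$, which is impossible because for $\eps_1$ small the set $\varphi^{-1}(D)$ has real part bounded away from $-1/2$. (The same asymmetry already occurs in the proof of Theorem~\ref{thm:main1}, where the step at $v_0$ involves $\Delta$ factors and only yields argument less than $\pi$ rather than $\pi/2$.) Your proposal needs this extra argument, or an equivalent substitute such as a real-part bound on $\varphi^{-1}(D(\eps_1))$, to be complete.
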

\begin{proof}
Let $\eps_1$ and $\eps_2$ be the two constants from Lemma~\ref{lem:invariant domain}, \change{where $\eps_1$ is chosen sufficiently small.}
Let $D=D(\eps_1)$ and let $\delta=\eps_2$.
Let $G$ be a graph of maximum degree at most $\Delta$.
\change{Since the independence polynomial is multiplicative over the disjoint union of graphs,} we may assume that $G$ is connected.
Fix a vertex $v_0$ of $G$.
We will show by induction that for each subset $U\subseteq V\setminus\{v_0\}$ we have
\begin{itemize}
\item[(i)]\text{ $Z_{G[U]}(\lambda)\neq 0$,}
\item[(ii)]\text{if $u\in U$ has a neighbor in $V\setminus U$, then $\varphi(R_{G[U],u})\in D$},
\end{itemize}
Clearly, if $U=\emptyset$, then both (i) and (ii) are true.

Now suppose $U\subseteq V\setminus \{v_0\}$ is nonempty and let $H=G[U]$.
Let $u_0\in U$ be such that $u_0$ has a neighbor in $V\setminus U$ ($u_0$ exists as $G$ is connected).
Let $u_1,\ldots,u_d$ be the neighbors of $u_0$ in $H$. Note that $d\leq \Delta-1$.
Define $H_0=H-u_0$ and set for $i=1,\ldots,d$ $H_i=H_{i-1}-u_i$.
Then by induction we know that for $i=0,\ldots,d$, $Z_{H_i}(\lambda)\neq 0$ and so the ratios $R_{H_{i-1},u_{i}}$ are well defined for $i\geq 1$ and by induction they satisfy $\varphi(R_{H_{i-1},u_{i}})\in D$.
By Lemma~\ref{lem:weitz}
\[
R_{H,u_{0}}=\frac{\lambda_{u_0}}{\prod_{i=1}^d(1+R_{H_{i-1},u_{i}})}.
\]
Since $\varphi(R_{H_{i-1},u_{i}})\in D$ for $i=1,\ldots,d$, we have by Lemma~\ref{lem:invariant domain} that $\varphi(R_{H,u_{0}})\in D$.
From this we conclude that $R_{H,u_{0}}\neq -1$, as $-1\notin \varphi^{-1}(D)$.
So by \eqref{eq:no zero not -1} $Z_H(\lambda)\neq 0$.
This shows that (i) and (ii) hold for all subsets $U\subseteq V\setminus\{v_0\}$.

To conclude the proof we need to \change{show that $Z_G(\lambda)\neq 0$}.
Let $v_1,\ldots,v_d$ be the neighbors of $v_0$ (in any order). Define $G_0=G-v_0$ and set for $i=1,\ldots,d$ $G_i=G_{i-1}-v_i$.
Then by \change{(i)}
we know that for $i=0,\ldots,d$, $Z_{G_i}(\lambda)\neq 0$ and so the ratios $R_{G_{i-1},v_{i}}$ are well defined for $i\geq 1$ and by 
\change{(ii)}
they satisfy $\varphi(R_{G_{i-1},v_{i}})\in D$. Write for convenience $z_i=R_{G_{i-1},v_{i}}$ for $i=1,\ldots,d$.
Then, by the same reasoning as above, we have
\[R_{G,v_{0}}(1+z_d)=\frac{\lambda_{v_0}}{\prod_{i=1}^{d-1}(1+z_i)}\in \varphi^{-1}(D).\]
This implies that $R_{G,v_{0}}$ is not equal to $-1$, for if this were the case, we would have
$\change{-1\in z_d+\varphi^{-1}(D)}$. However, $\change{z_d\in \varphi^{-1}(D)}$ \change{and for $\eps_1$ small enough, $\varphi^{-1}(D)$ will have real part bounded away from $-1/2$,  a contradiction.}
We conclude that $Z_G(\lambda)\neq 0$.
\end{proof}
Theorem~\ref{thm:main} is now an easy consequence.
\begin{proof}[Proof of Theorem~\ref{thm:main}]
Let for $\eps>0$, $\delta(\eps)$ be the associated $\delta>0$ from Theorem~\ref{thm:precise}.
Consider a sequence $\epsilon_i \rightarrow 0$ and define
\[
D_\Delta:=\bigcup \mathcal{N}([0,(1-\eps_i)\lambda_\Delta),\delta(\eps_i)).
\]
The set $D_\Delta$ is clearly open and contains $[0,\lambda_\Delta)$.
Moreover, for any graph $G$ of maximum degree at most $\Delta$ and $\lambda\in D_\Delta$ we have $Z_G(\lambda)\neq 0$, as $\lambda\in \mathcal{N}([0,(1-\eps)\lambda_\Delta),\delta(\eps))$ for some $\eps>0$.
\end{proof}
Let us recall that the literal statement of Theorem~\ref{thm:main} is false in the multivariate setting as we will prove in the appendix.
However, by the same reasoning as above we do immediately obtain the following.
\begin{corollary}\label{cor:mult}
Let $\Delta \in \mathbb N $ with $\Delta \geq 3$, and let $n \in \mathbb N$. Then there exists a complex domain $D_\Delta$ containing $[0,\lambda_\Delta)^\change{n}$ such that for any graph $G=(V,E)$ \change{with $V = \{1,\ldots , n\}$} of maximum degree at most $\Delta$ and any $\lambda \in D_\Delta$, we have that $Z_G( \lambda)\neq 0$.
\end{corollary}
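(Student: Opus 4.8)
The plan is to prove Corollary~\ref{cor:mult} by imitating the proof of Theorem~\ref{thm:precise} almost verbatim, with the only change being the choice of parameter $\eps$. The point is that for a \emph{fixed} finite vertex set $V=\{1,\ldots,n\}$, any point $\lambda\in[0,\lambda_\Delta)^n$ has all its coordinates strictly below $\lambda_\Delta$, but the ``gap'' $\lambda_\Delta-\lambda_v$ need not be uniform over $\lambda$ in $[0,\lambda_\Delta)^n$. So I cannot use a single $\eps$ and single $\delta$ to cover the whole cube at once; instead I exhaust the cube by a countable union of the neighborhoods produced by Theorem~\ref{thm:precise}, exactly as was done in the proof of Theorem~\ref{thm:main} from Theorem~\ref{thm:precise}.

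Concretely, I would argue as follows. Fix $\Delta\geq 3$ and $n\in\N$. For each $\eps>0$ let $\delta(\eps)>0$ be the constant furnished by Theorem~\ref{thm:precise}. Pick a sequence $\eps_i\to 0$ and define
\[
D_\Delta:=\bigcup_{i}\mathcal{N}\big([0,(1-\eps_i)\lambda_\Delta),\delta(\eps_i)\big)^n.
\]
This set is open (a countable union of products of open sets) and it contains $[0,\lambda_\Delta)^n$: given $(\l_1,\ldots,\l_n)\in[0,\lambda_\Delta)^n$, the number $\max_v \l_v$ is strictly less than $\lambda_\Delta$, hence lies in $[0,(1-\eps_i)\lambda_\Delta)$ for all sufficiently small $\eps_i$, and then every coordinate lies in that interval, so the point lies in the $i$-th set of the union. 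Finally, if $\lambda\in D_\Delta$ then $\lambda\in \mathcal{N}([0,(1-\eps_i)\lambda_\Delta),\delta(\eps_i))^n$ for some $i$, i.e.\ each coordinate $\lambda_v$ lies in $\mathcal{N}([0,(1-\eps_i)\lambda_\Delta),\delta(\eps_i))$; Theorem~\ref{thm:precise} applied with $\eps=\eps_i$ then gives $Z_G(\lambda)\neq 0$ for every graph $G$ on $V=\{1,\ldots,n\}$ of maximum degree at most $\Delta$.

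I do not expect a genuine obstacle here — the corollary is essentially a repackaging of Theorem~\ref{thm:precise}. The only subtlety worth being careful about is the non-uniformity just mentioned: one must resist the temptation to claim a single domain of the form $\mathcal{N}([0,(1-\eps)\lambda_\Delta),\delta)^n$ works, since no fixed $\eps>0$ makes that contain all of $[0,\lambda_\Delta)^n$; the countable-union trick is what repairs this, and it is legitimate precisely because $n$ is fixed and each individual $\lambda$ only needs \emph{one} good $\eps_i$. It is also worth noting explicitly why the literal (single-$\eps$, all-$n$) statement of Theorem~\ref{thm:main} still fails in the multivariate setting despite this corollary: the domain $D_\Delta$ here depends on $n$, whereas the counterexample in Appendix~\ref{app:complex dynamics} rules out an $n$-independent multivariate domain.
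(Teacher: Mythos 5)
Your proof is correct and is essentially identical to the paper's: the authors also define $D_\Delta=\bigcup_i \mathcal{N}([0,(1-\eps_i)\lambda_\Delta),\delta(\eps_i))^n$ for a sequence $\eps_i\to 0$ and invoke Theorem~\ref{thm:precise}, exactly as you do. Your closing remark about the non-uniformity (the domain not being of the form $D^n$) matches the paper's own remark following the corollary.
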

\change{We remark that the difference between Corollary~\ref{cor:mult} and Theorem~\ref{thm:main} is subtle. The set $D_\Delta$ is chosen of the form
$$
D_\Delta:=\bigcup \mathcal{N}([0,(1-\eps_i)\lambda_\Delta),\delta(\eps_i))^n,
$$
as above. In particular the set $D_\Delta$ is not of the form $D^n$ for some open set $D$ containing $[0,\lambda_\Delta)$, hence in this sense it is not a literal generalization of Theorem~\ref{thm:main}.} 

\section{Concluding remarks and questions}\label{sec:conclusion}
In this paper we have shown that Sokal's conjecture is true.
By results from \cite{PR16} this gives as a direct application the existence of an efficient algorithm (different than Weitz's algorithm \cite{W6}) for approximating the independence polynomial at any fixed $0<\lambda<\lambda_\Delta$.
By a result of Sly and Sun \cite{SS14} it is known that unless NP=RP there does not exist an efficient approximation algorithm for computing the independence polynomial at $\lambda>\lambda_\Delta$ for graphs of maximum degree at most $\Delta$.
Very recently it was shown by Galanis, Goldberg and  \v{S}tefankovi\v{c} \cite{GGS16}, building on locations of zeros of the independence polynomial for certain trees, that it is NP-hard to approximate the independence polynomial at $\lambda<\frac{-(\Delta-1)^{\Delta-1}}{\Delta^\Delta}$ for graphs of maximum degree at most $\Delta$.
Recall from Proposition~\ref{prop:regular trees} that at any $\lambda$ contained in
\[
U_{\Delta-1}=\left\{\lambda_\Delta(\alpha)=\frac{-\alpha (\Delta-1)^{\Delta-1}}{(\Delta-1+\alpha)^\Delta}\mid |\alpha|< 1\right\},
\]
the independence polynomial for regular trees does not vanish and that for any $\lambda\in \partial(U_{\Delta-1})$ there exists $\lambda'$ arbitrarily close to $\lambda$ for which there exists a regular tree $T$ such that $Z_T(\lambda')=0$.
This naturally leads two the following two questions.
\begin{question}
Let $\alpha\in \C$ be such that $|\alpha|>1$. Let $\eps>0$ and let $\Delta\in \N$. Is it true that it is NP-hard to compute an $\eps$-approximation\footnote{By an $\eps$-approximation of $Z_G(\lambda)$ we mean a nonzero number $\zeta\in \C$ such that $e^{-\eps}\leq |Z_G(\lambda)/\zeta| \leq e^{\eps}$ \change{and such that the angle between $Z_G(\lambda)$ and $\zeta$ is at most $\eps$}.} of the independence polynomial at $\lambda_\Delta(\alpha)$ for graphs $G$ of maximum degree at most $\Delta$?
\end{question}
\change{This question has recently been answered positively, in a strong form, by Bez\'akov\'a, Galanis, Goldberg, and \v{S}tefankovi\v{c}~\cite{BGGS17}. They in fact showed that it is even \#P hard to approximate the independence polynomial at non-positive $\lambda$ contained in the complement of the closure of $U_{\Delta-1}$.}
\begin{question}
Is it true that for any graph $G$ of maximum degree at most $\Delta\geq 3$ and any $\alpha \in \C$ with $|\alpha|<1$ one has $Z_G(\lambda_\Delta(\alpha))\neq 0$?
The same question is also interesting for the multivariate independence polynomial.
\end{question}
We note that if \change{this question too has a positive answer,} it would lead to a complete understanding of the complexity of approximating the independence polynomial of graphs at any complex number $\lambda$ in terms of the maximum degree.

\quad \\
\noindent {\LARGE \bf Appendix}
\begin{appendix}

\section{Parabolic bifurcations in complex dynamical systems, and Proposition~\ref{prop:regular trees}}\label{app:complex dynamics}

The proof of Proposition~\ref{prop:regular trees} follows from results well known to the complex dynamical systems community, but not easily found in textbooks. In this appendix we give a short overview of the results needed, and outline how Proposition~\ref{prop:regular trees} can be deduced from these results. The presentation is aimed at researchers who are not experts on parabolic bifurcations. Details of proofs will be given only in the simplest setting. Readers interested in working out the general setting are encouraged to look at the provided references.

We consider iteration of the rational function
$$
f_\lambda(z) = \frac{\lambda}{(1+z)^d},
$$
where $\lambda \in \mathbb C$ and $d \ge 2$. We note that $f_\lambda$ has two critical 
\change{points}, $-1$ and $\infty$, and that $f_\lambda(-1) = \infty$.

\begin{lemma}\label{lemma:critical}
If $f_\lambda$ has an attracting or parabolic periodic orbit $\{x_1, \ldots , x_k\}$, then the orbits of $-1$ and $\infty$ both converge to this orbit.
\end{lemma}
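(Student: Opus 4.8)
The plan is to invoke the classical Fatou–Julia theory of rational maps, for which the standard reference is Milnor's book \cite{M06}. The key structural fact is that every attracting or parabolic cycle of a rational map attracts at least one critical point; more precisely, the immediate basin of each attracting or parabolic periodic point must contain a critical point of $f_\lambda$. Since $f_\lambda$ has exactly two critical points, namely $-1$ and $\infty$ (as noted in the text, with $f_\lambda(-1)=\infty$), the orbits of these two points are the only candidates, and in fact they lie on a single grand orbit: the forward orbit of $-1$ passes through $\infty$ after one step. So the first step is simply to recall this theorem and observe that the cycle $\{x_1,\dots,x_k\}$, being attracting or parabolic, must attract a critical point.

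The second step is to upgrade ``attracts a critical point'' to ``attracts the orbit of $-1$ \emph{and} the orbit of $\infty$''. Because $f_\lambda(-1)=\infty$, the two critical orbits coincide from the first iterate onward: $f_\lambda^{\circ n}(-1)=f_\lambda^{\circ(n-1)}(\infty)$ for all $n\ge 1$. Hence if either critical point is attracted to the cycle, so is the other, and the conclusion follows at once. One should be slightly careful about the degenerate possibility that $-1$ or $\infty$ is itself a member of the periodic cycle (a superattracting case); but then the statement that the orbit ``converges to the orbit'' holds trivially, and this causes no problem.

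The only real content is therefore the cited theorem on critical points in attracting/parabolic basins, and the only thing to be careful about is the parabolic case, where the relevant statement is that each \emph{attracting petal} of a parabolic cycle contains a point of the forward orbit of a critical point; this is the Leau–Fatou flower theorem together with the fact that the union of the attracting petals (the parabolic basin) must meet the critical set. I expect the main obstacle to be purely expository: making sure the parabolic version of the ``a critical point lies in the basin'' statement is quoted in the correct form, since textbooks most often state only the attracting case explicitly. Given that this appendix is explicitly aimed at non-experts, I would state the needed result carefully with a reference (e.g.\ \cite[\S8--\S10]{M06}) rather than reprove it.
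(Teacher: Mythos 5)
Your proposal is correct and follows essentially the same route as the paper: the lemma is deduced from the classical theorem that every attracting or parabolic immediate basin of a rational map of degree at least $2$ contains a critical point, combined with the observation that $f_\lambda$ has only the two critical points $-1$ and $\infty$ and that $f_\lambda(-1)=\infty$ makes their forward orbits coincide after one step. The paper additionally sketches (via Fatou coordinates) why the cited theorem holds in the parabolic case, whereas you defer entirely to \cite{M06}, but this is an expository rather than a mathematical difference.
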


This statement is the immediate consequence of the following classical result, which can for example be found in \cite{M06}.

\begin{theorem}
Let $f$ be a rational function of degree $d \ge 2$ with an attracting or parabolic cycle. Then the corresponding immediate basin must contain at least one critical point.
\end{theorem}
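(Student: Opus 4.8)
\emph{Plan.} This is the classical theorem that an attracting or parabolic cycle of a rational map of degree $d\geq2$ attracts a critical point; I would prove it by the standard inverse-branch argument, treating the attracting and parabolic cases separately. First the reductions: if $k$ is the period of the cycle, put $g=f^{\circ k}$, so that each point $z_0$ of the cycle is an attracting, respectively parabolic, fixed point of $g$. By the chain rule, a critical point of $g$ has among its first $k$ iterates under $f$ a critical point of $f$, and that iterate lies in the corresponding forward image of the Fatou component of $z_0$, which is again one of the components constituting the immediate basin of the $f$-cycle. Hence it suffices to produce a critical point of $g$ inside $U$, the connected component of the basin of $z_0$ containing $z_0$ (in the parabolic case $z_0\in\partial U$). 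Note that $U$ is a hyperbolic Riemann surface, being disjoint from the Julia set $J(f)=J(g)$, which has at least three points.

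\emph{Attracting case.} Suppose, for contradiction, that $U$ contains no critical point of $g$. Choose a small round disc $U_0\ni z_0$ with $g(\ov{U_0})\subset U_0$, and define $U_{n+1}$ to be the connected component of $g^{-1}(U_n)$ containing $U_n$; then $U_0\subset U_1\subset\cdots$ with $\bigcup_n U_n=U$. Since $U$ contains no critical point, each $g\colon U_{n+1}\to U_n$ is a proper holomorphic map with no branch points onto the simply connected domain $U_n$ (simple connectivity follows inductively from $U_0$ being a disc), so $g|_{U_{n+1}}$ is a biholomorphism. The inverses $(g|_{U_{n+1}})^{-1}\colon U_n\to U_{n+1}$ are mutually compatible --- they all fix $z_0$, at which $g$ is locally injective --- hence glue to a holomorphic map $\psi\colon U\to U$ with $\psi(z_0)=z_0$ and $|\psi'(z_0)|=1/|g'(z_0)|>1$, the last inequality because the cycle is attracting. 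But the Schwarz--Pick lemma, applied to the holomorphic self-map $\psi$ of the hyperbolic surface $U$, forces $|\psi'(z_0)|\leq1$ (the Poincaré conformal factor cancels at a fixed point), a contradiction.

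\emph{Parabolic case, the main obstacle.} Now $g'(z_0)$ is a root of unity and $z_0\in\partial U$, so there is no interior fixed point at which to invoke Schwarz--Pick and the argument must be rerouted. Passing to a further iterate we may assume $g'(z_0)=1$, so $g(z)=z+a(z-z_0)^{q+1}+\cdots$ with $a\neq0$; the Leau--Fatou flower theorem then supplies an attracting petal $P\subset U$ with $g(P)\subset P$, with $g|_P$ injective, with every orbit in $P$ converging to $z_0$, and with a Fatou coordinate $\Phi\colon P\xrightarrow{\ \sim\ }\{\operatorname{Re}w>R\}$ conjugating $g$ to the translation $w\mapsto w+1$. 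Since every point of $U$ eventually enters $P$ under iteration, the functional equation $\Phi(g(z))=\Phi(z)+1$ extends $\Phi$ to a holomorphic map $\widetilde\Phi\colon U\to\C$ with $\widetilde\Phi\circ g=\widetilde\Phi+1$, and differentiating $\widetilde\Phi\bigl(g^{\circ n}(z)\bigr)=\widetilde\Phi(z)+n$ together with $\Phi'\neq0$ on $P$ shows that $\widetilde\Phi'$ vanishes at a point exactly when the forward orbit of that point meets a critical point of $g$. So, assuming $U$ has no critical point of $g$, the map $\widetilde\Phi$ is a local biholomorphism; one then verifies it is in fact a covering map onto $\C$, using that $\widetilde\Phi(P)$ is a half-plane and the relation $\widetilde\Phi\circ g=\widetilde\Phi+1$ to propagate this over all of $\C$. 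Then $U$ is a covering space of $\C$, hence conformally $\C$, $\C^{*}$, or a complex torus --- each impossible for a Fatou component, which omits at least three points of $\widehat{\C}$, a contradiction. I expect this parabolic bookkeeping --- installing the flower theorem and Fatou coordinate carefully and justifying that the extended $\widetilde\Phi$ is a genuine covering --- to be the delicate part; the details are carried out in \cite{M06}, on which the paper relies.
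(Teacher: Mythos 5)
Your proposal is correct and follows essentially the same route as the paper: the paper only sketches the parabolic case via the extension of the incoming Fatou coordinate to the whole basin (contradicting hyperbolicity of a Fatou component if no critical point is present) and defers the rest to Milnor \cite{M06}, which is exactly your parabolic argument. For the attracting case, where the paper merely says ``a similar argument can be given,'' you supply the standard inverse-branch/Schwarz--Pick argument, which is the classical way to complete that half (noting that the superattracting case is vacuous under your contradiction hypothesis, since then $z_0$ itself is critical).
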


Let us say a few words about how \change{ to prove} this result in the parabolic case. Recall that a period orbit is called \emph{parabolic} if its multiplier, the derivative in case of a fixed point, is a root of unity. We consider the model case, where $0$ is a parabolic fixed point with derivative $1$, and $f$ has the form
$$
z_1 = z_0 - z_0^2 + h.o.t..
$$
By considering the change of coordinates $u = \frac{1}{z}$ we obtain
$$
u_1 = u_0 + 1 + O(\frac{1}{u_0}),
$$
and we observe that if $r>0$ is chosen sufficiently small, the orbits of all initial values $z \in D(r,r) =\{|z-r|<r\}$ converge to the origin tangent to the positive real axis. In fact, after a slightly different change of coordinates one can obtain the simpler map
$$
u_1 = u_0 + 1.
$$
These coordinates on $D(r,r)$ are usually denoted by $u = \phi^i(z)$, and are referred to as the \emph{incoming Fatou coordinates}. The Fatou coordinates are invertible on a sufficiently small disk $D(r,r)$, and can be holomorphically extended to the whole parabolic basin by using the functional equation $\phi^i(f(z)) = \phi^i(z) + 1$.

By considering the inverse map $z_1 = z_0 + z_0^2 + h.o.t.$ we similarly obtain the outgoing Fatou coordinates $\phi^o$, defined on a small disk $D(-r,r)$. It is often convenient to use the inverse map of $\phi^o$, which we will denote by $\psi^o$. This inverse map can again be extended to all of $\mathbb C$ by using the functional equation $\psi^o(\zeta-1) = f(\psi^o(\zeta))$.

Now let $f$ be a rational function of degree at least $2$, and imagine that the parabolic basin does not contain a critical point. Then $\phi^i$ extends to a biholomorphic map from $\mathbb C$ to the parabolic basin. This gives a contradiction, as a parabolic basin must be a hyperbolic Riemann surface, i.e. its covering space is the unit disk, and therefore cannot be equivalent to $\mathbb C$. A similar argument can be given to deduce that any attracting basin must contain a critical point.

Let us return to the original maps $f_\lambda$. Recall that for fixed $d \ge 2$, we denote the region in parameter space $\mathbb C_\lambda$ for which $f_\lambda$ has an attracting fixed point by $U_d$. The set $U_d$ is an open and connected neighborhood of the origin. An immediate corollary of the above discussion is the following.

\begin{corollary}
For each $\lambda \in U_d$,  the orbit of the initial value
$$
z_0 = f^{\circ 2}_\lambda(\infty) = \lambda
$$
avoids the point $-1$.
\end{corollary}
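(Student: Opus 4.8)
The plan is to derive the corollary directly from Lemma~\ref{lemma:critical}. Since $\lambda \in U_d$, by definition $f_\lambda$ has an attracting fixed point $p \in \mathbb C$, which we regard as an attracting periodic orbit of period one. Lemma~\ref{lemma:critical} then asserts that the forward orbit of the critical point $\infty$ converges to $p$. I would first record the elementary facts $f_\lambda(\infty) = 0$ and $f_\lambda(0) = \lambda$, so that $f^{\circ 2}_\lambda(\infty) = \lambda = z_0$ and the forward orbit $(z_n)_{n \ge 0}$ of $z_0$ is a tail of the orbit of $\infty$; in particular $(z_n)$ also converges to $p$.

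The remaining step is a short contradiction argument. Suppose $z_n = -1$ for some $n \ge 0$. Since $f_\lambda(-1) = \infty$ and $f^{\circ 2}_\lambda(\infty) = z_0$, we obtain $z_{n+1} = \infty$, $z_{n+2} = 0$ and $z_{n+3} = z_0$, so the orbit $(z_n)$ is periodic. A convergent periodic sequence must be constant, yet $z_{n+1} = \infty \neq 0 = z_{n+2}$ shows $(z_n)$ is not constant --- contradicting convergence to $p$. (Conceptually: $p$ is a finite point, so $\infty$ cannot recur along the orbit.) Hence $z_n \neq -1$ for all $n \ge 0$, which is the assertion.

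I do not anticipate a genuine obstacle here. The only things to watch are the bookkeeping of the successive iterates landing on $-1$, $\infty$, $0$, $\lambda$, and the harmless observation that an attracting fixed point of $f_\lambda$ is a finite point of $\widehat{\mathbb C}$, which is precisely what makes ``converges to $p$'' incompatible with $\infty$ recurring along a periodic orbit. Everything else is immediate from Lemma~\ref{lemma:critical}, which is why the excerpt flags this statement as an immediate corollary of the preceding discussion.
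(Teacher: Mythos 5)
Your argument is correct and takes essentially the same route as the paper: the corollary is stated there as an immediate consequence of Lemma~\ref{lemma:critical}, and your write-up simply makes explicit the bookkeeping ($-1\mapsto\infty\mapsto 0\mapsto\lambda$ forces periodicity, which is incompatible with convergence to the finite attracting fixed point). The only degenerate case, $\lambda=0$, where $f_\lambda$ is constant and $f_\lambda(-1)\neq\infty$, is trivial anyway since the orbit is then constantly $0$.
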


In fact, it turns out that one can prove the following stronger statement.

\begin{lemma}\label{lemma:parabolic}
The region $U_d$ is a maximal open set of parameters $\lambda$ for which the orbit of $z_0$ avoids the critical point $-1$.
\end{lemma}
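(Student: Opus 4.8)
The plan is to reduce the lemma to the inclusion $\partial U_d\subseteq\overline{B}$, where $B:=\{\lambda\in\C\mid f_\lambda^{\circ n}(z_0)=-1\text{ for some }n\ge 0\}$, so that $\C\setminus B$ is exactly the set of $\lambda$ for which the orbit of $z_0$ avoids $-1$. Granting this, $U_d$ is maximal as claimed: it is open and, by the Corollary above, disjoint from $B$; and if $V$ were a connected open set with $U_d\subsetneq V$ on which the orbit of $z_0$ still avoids $-1$, then $V$ would be an open neighbourhood, disjoint from $B$, of some point $\lambda_0\in\partial U_d$ --- impossible if $\lambda_0\in\overline{B}$. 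Note also that, since $z_0=f_\lambda^{\circ 3}(-1)$, membership in $B$ amounts to $-1$ being a periodic point of $f_\lambda$; thus $B$ is precisely the set of parameters at which the critical point $-1$ is captured by a superattracting cycle.

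So fix a boundary parameter $\lambda_0\in\partial U_d$, and let $\zeta(\lambda)$ be the fixed point of $f_\lambda$ whose multiplier $\alpha(\lambda)=-d\zeta(\lambda)/(1+\zeta(\lambda))$ has modulus $1$ at $\lambda_0$. The computation following \eqref{eq:parametrise U_d} shows that, apart from the unique parameter of $\partial U_d$ where $\alpha=1$ (and the finitely many boundary parameters where two fixed points collide), $\zeta$ extends holomorphically and $\alpha$ biholomorphically onto a neighbourhood of $\alpha(\lambda_0)$ over some disk $\Omega\ni\lambda_0$; such exceptional parameters will be dealt with at the end using that $\overline{B}$ is closed. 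Shrinking $\Omega$ so that $0\notin\Omega$, the sets $\Omega^{+}:=\{\lambda\in\Omega\mid|\alpha(\lambda)|<1\}$ and $\Omega^{-}:=\{\lambda\in\Omega\mid|\alpha(\lambda)|>1\}$ are nonempty and open, $\Omega^{+}\subseteq U_d$, and $\zeta$ is attracting on $\Omega^{+}$ and repelling on $\Omega^{-}$. I claim the family $h_n(\lambda):=f_\lambda^{\circ n}(-1)$ of holomorphic maps $\Omega\to\widehat{\C}$ is \emph{not} normal on $\Omega$. Indeed, if it were, then on $\Omega^{+}$ every $h_n$ converges to $\zeta$ by Lemma~\ref{lemma:critical}, so each locally uniform subsequential limit of $(h_n)$ agrees with $\zeta$ on the open set $\Omega^{+}$ and hence, by the identity theorem, on all of $\Omega$; thus $h_n\to\zeta$ locally uniformly on $\Omega$, in particular pointwise on $\Omega^{-}$. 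Since a forward orbit converging to a repelling fixed point must eventually land on it, for every $\lambda\in\Omega^{-}$ there is $m$ with $f_\lambda^{\circ m}(-1)=\zeta(\lambda)$; but each set $\{\lambda\in\Omega^{-}\mid f_\lambda^{\circ m}(-1)=\zeta(\lambda)\}$ is a discrete analytic subset of $\Omega^{-}$ (its defining function is not identically zero, since the critical orbit does not reach $\zeta$ on $\Omega^{+}$), and a countable union of such sets cannot exhaust the open set $\Omega^{-}$ --- a contradiction.

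Now apply Montel's theorem. For $n\ge 3$ one has $h_n(\lambda)=\infty$ only when $h_{n-1}(\lambda)=-1$, and (using $0\notin\Omega$) $h_n(\lambda)=0$ only when $h_{n-2}(\lambda)=-1$; so if the orbit of $-1$ never returned to $-1$ in $\Omega$, the family $(h_n)_{n\ge 3}$ would omit the three values $0,-1,\infty$ on $\Omega$ and hence be normal, and adjoining the finitely many remaining $h_n$ would keep it normal --- contradicting the previous paragraph. Therefore the orbit of $-1$ does return to $-1$ somewhere in $\Omega$, i.e.\ $\Omega\cap B\neq\emptyset$; since $\Omega$ was an arbitrary small neighbourhood of $\lambda_0$, this gives $\lambda_0\in\overline{B}$. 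The exceptional boundary parameters set aside above are limits of non-exceptional ones, so they too lie in the closed set $\overline{B}$, and $\partial U_d\subseteq\overline{B}$ follows; by the first paragraph this proves the lemma. The step I expect to be the genuine obstacle is the non-normality argument of the middle paragraph --- transporting the ``the critical point is captured'' information from $U_d$ across $\partial U_d$ and converting it, via the dynamics near the repelling fixed point, into a contradiction; this is what goes beyond the elementary complex dynamics used for part (i) of Proposition~\ref{prop:regular trees}. A more classical but heavier alternative is to run parabolic implosion at the (dense) set of parabolic parameters of $\partial U_d$, using the Fatou-coordinate machinery set up elsewhere in this appendix.
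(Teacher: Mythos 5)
Your route is genuinely different from the paper's. The paper reduces Lemma~\ref{lemma:parabolic} to the same statement you isolate, namely $\partial U_d\subseteq\overline{B}$, but proves it only at the (dense) parabolic boundary parameters via parabolic implosion: Theorem~\ref{thm:bifurcation} (Lavaurs' theorem, Fatou coordinates) applied with $w=-1$, using that the exceptional set is empty. You instead run the classical ``activity of the critical point'' argument: at any non-exceptional boundary parameter the family $h_n(\lambda)=f_\lambda^{\circ n}(-1)$ cannot be normal (the continued fixed point $\zeta$ is attracting on one side of $\partial U_d$ and repelling on the other, and an orbit converging to a repelling fixed point must land on it, which can only happen on a countable set of parameters), while Montel's theorem would force normality if the critical orbit never returned to $-1$, since then $(h_n)_{n\ge 3}$ omits $0,-1,\infty$. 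This is more elementary and self-contained than the paper's proof (no Fatou coordinates, no density of parabolic parameters, and it treats every boundary point directly, not just parabolic ones), at the cost of being less quantitative; your reduction of ``maximality'' to $\partial U_d\subseteq\overline B$ for \emph{connected} open sets matches the paper's intended reading and is what Proposition~\ref{prop:regular trees}(ii) actually needs.

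There is, however, one genuine gap: the discreteness of $E_m=\{\lambda\in\Omega^-\mid h_m(\lambda)=\zeta(\lambda)\}$ is justified by the parenthetical claim that ``the critical orbit does not reach $\zeta$ on $\Omega^+$'', which is nowhere proved and is stronger than what is true in general (the critical orbit may well hit the attracting fixed point at isolated parameters of $U_d$; there is no obstruction of the kind that forbids it for parabolic parameters). What your argument actually needs is only that $h_m-\zeta\not\equiv 0$ on $\Omega$ for each $m$, and this does require an argument, because if $h_m\equiv\zeta$ on $\Omega$ the family is normal and your contradiction evaporates. Two short fixes: (a) if $h_m\equiv\zeta$ on $\Omega$, then since $\zeta$ is holomorphic on all of $U_d$ and $U_d\cup\Omega$ is connected, the identity propagates to $U_d$, and at real $\lambda\in(0,\lambda_\Delta)$ it fails because $f_\lambda$ is injective on $(-1,\infty)$ and $h_2=0$ is not the fixed point, so no iterate ever equals it; or (b) run your argument only at parabolic $\lambda_0\in\partial U_d$ (dense in $\partial U_d$, and $\overline{B}$ is closed, exactly as in your treatment of the exceptional points): there $h_m(\lambda_0)\neq\zeta(\lambda_0)$ for all $m\ge 1$, since by the theorem quoted before Lemma~\ref{lemma:critical} one of the critical points $-1,\infty$ lies in the (open, forward invariant) parabolic basin, hence so does $h_m(\lambda_0)$, while the parabolic point itself lies in the Julia set. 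With either patch the proof goes through.
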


Observe that Lemma \ref{lemma:parabolic} directly implies Proposition~\ref{prop:regular trees}.

Note that the parameters $\lambda$ for which there is a parabolic fixed point form a dense subset of $\partial U_d$. Hence in order to obtain Lemma \ref{lemma:parabolic} it suffices to prove that for any parabolic parameter $\lambda_0 \in \partial U_d$ and any neighborhood $\mathcal{N}(\lambda_0)$, there exists a parameter $\lambda \in \mathcal{N}(\lambda_0)$ and an $N \in \mathbb N$ for which $f_{\lambda}^{\circ N}(z_0) = -1$. The fact that such $\lambda$ and $N$ exist is due to the following result regarding parabolic bifurcations.

\begin{theorem}\label{thm:bifurcation}
Let $f_\epsilon$ be a one-parameter family of rational functions that vary holomorphically with $\epsilon$. Assume that $f = f_0$ has a parabolic periodic cycle, and that this periodic cycle bifurcates for $\epsilon$ near $0$. Denote one of the corresponding parabolic basins by $\mathcal{B}_f$, let $z_0 \in \mathcal{B}_f$, and let $w \in \hat{\mathbb C} \setminus \mathcal{E}_f$. Then there exists a sequence of $\epsilon_j \rightarrow 0$ and $N_j \rightarrow \infty$ for which \change{$f_{\epsilon_j}^{\circ N_j}(z_0) = w$}.
\end{theorem}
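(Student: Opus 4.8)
The plan is to reduce Theorem~\ref{thm:bifurcation} to the classical parabolic implosion theorem (convergence of high iterates of $f_\epsilon$ to Lavaurs maps), after which hitting $w$ \emph{exactly} becomes a matter of an argument--principle argument in the parameter $\epsilon$.

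First I would normalise: conjugating by a M\"obius map and replacing $f=f_0$ by an iterate, I may assume the parabolic cycle is a fixed point at $0$ of multiplier $1$ with a single attracting petal (the several--petal case being treated petal by petal). As recalled in the discussion above, $f$ then carries an incoming Fatou coordinate, a holomorphic surjection $\phi^i\colon\mathcal B_f\to\mathbb C$ with $\phi^i\circ f=\phi^i+1$, and an outgoing parametrisation $\psi^o\colon\mathbb C\to\hat{\mathbb C}$ with $f\circ\psi^o(\zeta)=\psi^o(\zeta-1)$, biholomorphic from a left half--plane onto a repelling petal $P$. Since $P$ meets $J(f)$ (the parabolic point lies on $\partial P$ and is non-isolated in $J(f)$) and the forward orbit of any open set meeting $J(f)$ is $\hat{\mathbb C}\setminus\mathcal E_f$, the image of the extended $\psi^o$ contains $\hat{\mathbb C}\setminus\mathcal E_f$; hence I can fix $\zeta^\ast\in\mathbb C$ with $\psi^o(\zeta^\ast)=w$ and set $\sigma^\ast:=\zeta^\ast-\phi^i(z_0)$, so that the Lavaurs map $\mathcal L_{\sigma^\ast}:=\psi^o\circ(\zeta\mapsto\zeta+\sigma^\ast)\circ\phi^i$ satisfies $\mathcal L_{\sigma^\ast}(z_0)=w$.

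Next I would invoke parabolic implosion (Douady--Lavaurs--Shishikura; see \cite{M06} and the references there). Because the cycle bifurcates, for $\epsilon\to0$ in a suitable sector the fixed point splits and there is a holomorphic transit function $\alpha(\epsilon)$ with $\alpha(\epsilon)\to\infty$, such that along any sequence $\epsilon_j\to0$, $N_j\to\infty$ with $N_j-\alpha(\epsilon_j)\to\sigma$ one has $f_{\epsilon_j}^{\circ N_j}\to\mathcal L_\sigma$ locally uniformly on $\mathcal B_f$ (for a suitable normalisation of $\phi^i,\psi^o$). In particular $f_{\epsilon_j}^{\circ N_j}(z_0)\to w$ along any sequence with $N_j-\alpha(\epsilon_j)\to\sigma^\ast$; what remains is to turn this limit into an equality. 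For that I would wiggle the parameter. Since $\alpha(\epsilon)\to\infty$, for every large $N$ the map $\sigma\mapsto\epsilon_N(\sigma):=\alpha^{-1}(N-\sigma)$ is holomorphic on a fixed neighbourhood $W$ of $\sigma^\ast$, takes values in the good sector, and tends to $0$ uniformly on $W$ as $N\to\infty$. By the uniform form of the implosion statement, the holomorphic maps $\sigma\mapsto f_{\epsilon_N(\sigma)}^{\circ N}(z_0)$ converge uniformly on $W$, as $N\to\infty$, to the non-constant holomorphic map $\sigma\mapsto\psi^o(\phi^i(z_0)+\sigma)$, which takes the value $w$ at the interior point $\sigma^\ast$. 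Hurwitz's theorem then yields, for all large $N$, a point $\sigma_N\in W$ with $\sigma_N\to\sigma^\ast$ and $f_{\epsilon_N(\sigma_N)}^{\circ N}(z_0)=w$; taking an increasing subsequence $N_j$ and $\epsilon_j:=\epsilon_{N_j}(\sigma_{N_j})\to0$ completes the proof.

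The hard part will be the parabolic implosion theorem itself: constructing the perturbed (``eggbeater'') Fatou coordinates, extracting the transit function $\alpha(\epsilon)$, and proving $C^0$--convergence of $f_{\epsilon_j}^{\circ N_j}$ to the Lavaurs map. This is the genuine analytic input --- exactly the part the appendix defers to the literature --- whereas locating $\zeta^\ast$, reducing to the model parabolic, the several--petal bookkeeping, and the Hurwitz step are routine; one should, however, check carefully that the bifurcation hypothesis really forces $\alpha(\epsilon)\to\infty$, since a persistently parabolic family would not implode.
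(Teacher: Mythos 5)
Your proposal is correct and takes essentially the same route as the paper: both invoke Lavaurs' parabolic implosion theorem to get locally uniform convergence of $f_{\epsilon_j}^{\circ N_j}$ on $\mathcal{B}_f$ to a Lavaurs map $\psi^o\circ T_\sigma\circ\phi^i$ chosen so that it sends $z_0$ to $w$, and then upgrade the limit to exact equality by perturbing the transit phase, your Hurwitz step being the same argument-principle/winding argument the paper uses on the circle $\alpha+\rho e^{i\theta}$. Your explicit justification that the image of the extended $\psi^o$ covers $\hat{\mathbb{C}}\setminus\mathcal{E}_f$ is a point the paper leaves implicit, but otherwise the two arguments coincide.
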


Here $\mathcal{E}_f$ denotes the exceptional set, the largest finite completely invariant set, which by Montel's Theorem contains at most two points; see \cite{M06}. Since the set $\{-1,\infty\}$ containing the two critical points of the rational functions $f_\lambda: z \mapsto \frac{\lambda}{(1+z)^d}$  does not contain periodic orbits, it quickly follows that the exceptional set of these functions is empty. Lemma~\ref{lemma:parabolic} follows from Theorem~\ref{thm:bifurcation} by taking $w = -1$ and considering a sequence $(\lambda_j)$ that converges to a parabolic parameter $\lambda_0 \in \partial U_d$.

Perturbations of parabolic periodic points play a central role in complex dynamical systems, and have been studied extensively, see for example the classical works of Douady \cite{D94} and Lavaurs \cite{L89}. We will only give an indication of how to prove Theorem \ref{thm:bifurcation}, by discussing again the simplest model, $f(z) = z - z^2 + h.o.t.$, and $f_\epsilon(z) = f(z) + \epsilon^2$.  For $\epsilon \neq 0$, the unique parabolic fixed point $0 = f(0)$ splits up into two fixed points. For $\epsilon>0$ small these two fixed points are both close to the imaginary axis, forming a small ``gate'' for orbits to pass through.

For $\epsilon>0$ small enough, the orbit of an initial value $z_0 \in \mathcal{B}_f$, converging to $0$ under the original map $f$, will pass through the gate between these two fixed points, from the right to the left half plane. The time it takes to pass through the gate is roughly $\pi/\epsilon$. The following more precise statement was proved in \cite{L89}.

\begin{theorem}[Lavaurs, 89']
Let $\alpha \in \mathbb C$, and consider sequences $(\epsilon_j)$ of complex numbers satisfying $\epsilon_j \rightarrow 0$, and positive integers $(n_j)$ for which
$$
\frac{\pi}{\epsilon_j} - n_j \rightarrow \alpha.
$$
Then the maps $f_{\epsilon_j}^{\circ n_j}$ converge, uniformly on compact subsets of $\mathcal{B}_f$, to the map $\mathcal{L}_\alpha = \psi^o \circ T_\alpha \circ \phi^i$, where $T_\alpha$ denotes the translation $x \mapsto x+\alpha$.
\end{theorem}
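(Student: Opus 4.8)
The plan is to work in coordinates adapted to the pair of fixed points of $f_\epsilon$ that collide as $\epsilon\to 0$, to construct \emph{perturbed Fatou coordinates} for $f_\epsilon$ that converge to the parabolic Fatou coordinates $\phi^i,\psi^o$ of $f=f_0$, and then to follow a single orbit as it is forced through the narrow ``gate'' between the two fixed points, counting iterates in these coordinates. Since the hypothesis $\frac{\pi}{\epsilon_j}-n_j\to\alpha$ with $n_j\in\N$ forces $\frac{\pi}{\epsilon_j}\sim n_j\to +\infty$, we may restrict attention to $\epsilon$ in the region of parameter space where this gate is open. Passing to the model $f_0(z)=z-z^2+\mathrm{h.o.t.}$, $f_\epsilon(z)=f_0(z)+\epsilon^2$, and to the coordinate $w=-1/z$ (in which $f_0$ reads $w\mapsto w+1+O(1/w)$ and the two fixed points of $f_\epsilon$ sit at $w\approx\mp 1/\epsilon$), the general case differs only in bookkeeping.

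The first and main step is to construct, for such $\epsilon$, an incoming Fatou coordinate $\phi^i_\epsilon$ and an outgoing Fatou coordinate $\phi^o_\epsilon$ (with inverse $\psi^o_\epsilon$) solving $\phi^i_\epsilon\circ f_\epsilon=\phi^i_\epsilon+1$ and $\phi^o_\epsilon\circ f_\epsilon=\phi^o_\epsilon-1$ on topological discs $P^i_\epsilon,P^o_\epsilon$, and extended by these functional equations over (a large part of) $\mathcal B_f$. Two things must be shown: that $P^i_\epsilon,P^o_\epsilon$ can be taken \emph{uniformly non-degenerate} as $\epsilon\to 0$, large enough to contain any prescribed compact subset of $\mathcal B_f$ and to overlap in a region straddling the gate; and that, with a fixed normalization, $\phi^i_\epsilon\to\phi^i$ and $\psi^o_\epsilon\to\psi^o$ uniformly on compacts. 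This is where the real work lies. I would establish it either by solving the conjugacy equation directly (away from the fixed points $f_\epsilon$ is within $o(1)$ of the translation $w\mapsto w+1$, so a standard iteration/graph-transform scheme produces $\phi_\epsilon$, and a normal-families argument together with uniqueness of the parabolic coordinates identifies the limit), or by a quasiconformal surgery in the spirit of Shishikura; in either approach the delicate point is to prevent the petals from pinching as $\epsilon\to 0$.

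The second ingredient is the transition between the two charts on the overlap $P^i_\epsilon\cap P^o_\epsilon$. Here I would compute, by a residue estimate for the near-translation $f_\epsilon$ (essentially evaluating $\oint\frac{dw}{f_\epsilon(w)-w-1}$ along a path joining the two fixed points), that the number of iterates of $f_\epsilon$ needed to carry a point from a fixed reference level of $P^i_\epsilon$ to a fixed reference level of $P^o_\epsilon$ is $\frac{\pi}{\epsilon}+O(1)$; equivalently, once the normalizations of $\phi^i_\epsilon$ and $\phi^o_\epsilon$ are matched to those of $\phi^i$ and $\psi^o$, the two coordinates differ on the overlap by $\frac{\pi}{\epsilon}+o(1)$. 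Getting the constant exactly right, both the coefficient $\pi$ and the fact that no further constant drift survives, is the second place where care is required.

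With these two ingredients the rest is bookkeeping. Fix $z_0\in\mathcal B_f$ and a compact neighborhood $K$. For $j$ large $\phi^i_{\epsilon_j}(z_0)$ is defined and close to $\phi^i(z_0)$, and while $f_{\epsilon_j}^{\circ k}(z_0)\in P^i_{\epsilon_j}$ one has $\phi^i_{\epsilon_j}(f_{\epsilon_j}^{\circ k}(z_0))=\phi^i_{\epsilon_j}(z_0)+k$. By the second step the orbit enters $P^o_{\epsilon_j}$ after $\frac{\pi}{\epsilon_j}+O(1)$ steps, and then its outgoing coordinate, read in the normalization matched to $\psi^o$, differs from $k$ by $\frac{\pi}{\epsilon_j}+o(1)$ (up to the sign fixed by the conventions $\phi^i\circ f=\phi^i+1$, $\phi^o\circ f=\phi^o-1$). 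Evaluating at $k=n_j$ and using $\frac{\pi}{\epsilon_j}-n_j\to\alpha$, the outgoing coordinate of $f_{\epsilon_j}^{\circ n_j}(z_0)$ converges to $\phi^i(z_0)+\alpha$, whence $f_{\epsilon_j}^{\circ n_j}(z_0)\to\psi^o(\phi^i(z_0)+\alpha)=\mathcal L_\alpha(z_0)$ by convergence of $\psi^o_{\epsilon_j}$. Since on $K$ these maps factor, for $j$ large, as a composition of the converging maps $\phi^i_{\epsilon_j}$, a translation depending only on $n_j-\frac{\pi}{\epsilon_j}$, and $\psi^o_{\epsilon_j}$, the family $\{f_{\epsilon_j}^{\circ n_j}\}$ is equicontinuous on $K$ and the convergence is uniform there. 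The main obstacle, to repeat, is the uniform construction and convergence of the perturbed Fatou coordinates in the first step, together with the exact residue bookkeeping of the second; the assembly in the last paragraph is routine once these are in place.
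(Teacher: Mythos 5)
First, a point of comparison: the paper does not prove this statement at all --- it is quoted as Lavaurs' theorem and attributed to \cite{L89} (with Douady \cite{D94} as background), and the surrounding text only sketches the objects involved (Fatou coordinates $\phi^i$, $\psi^o$, the gate, the passage time $\approx \pi/\epsilon$). Your outline follows exactly the route of those sources: perturbed Fatou coordinates $\phi^i_\epsilon,\phi^o_\epsilon$ solving the Abel equations on uniformly large perturbed petals, convergence $\phi^i_\epsilon\to\phi^i$ and $\psi^o_\epsilon\to\psi^o$ on compact subsets of $\mathcal{B}_f$, a phase computation showing the incoming and outgoing charts differ by $\pi/\epsilon+o(1)$ across the gate, and then the bookkeeping that converts the hypothesis $\pi/\epsilon_j-n_j\to\alpha$ into uniform convergence of $f_{\epsilon_j}^{\circ n_j}$ to $\psi^o\circ T_\alpha\circ\phi^i$. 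So in terms of approach you are aligned with the proof the paper relies on.

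As a proof, however, what you have written has a genuine gap, and you flag it yourself: the two steps that constitute essentially the entire content of Lavaurs' theorem are asserted rather than carried out. Concretely, (a) the existence of perturbed Fatou coordinates on domains that do not pinch as $\epsilon\to 0$, exhaust any prescribed compact subset of $\mathcal{B}_f$ via the functional equation, and converge (with a fixed normalization) to $\phi^i$ and $\psi^o$ uniformly on compacta; and (b) the exact asymptotics of the transition phase, namely that after matching normalizations the two charts differ by $\pi/\epsilon+o(1)$ with no surviving constant --- your contour-integral heuristic for the passage time is in the right spirit, but it is not an estimate with a controlled error term. Your final paragraph merely reassembles (a) and (b), so without them nothing has been proved; naming two possible methods (direct solution of the conjugacy equation, or quasiconformal surgery \`a la Shishikura) is not a substitute for executing one. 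Two minor points of care: with the paper's model $z_1=z_0-z_0^2+\mathrm{h.o.t.}$ the coordinate giving the translation by $+1$ is $u=1/z$ (your $w=-1/z$ gives $w\mapsto w-1$), and the sign of the perturbation must be such that the two fixed points straddle the real axis so that the gate actually opens --- the hypothesis $\pi/\epsilon_j-n_j\to\alpha$ does force $\arg\epsilon_j\to 0$, but this compatibility between the direction of approach and the orientation of the gate should be stated explicitly, since for other perturbations the orbit converges to an attracting fixed point instead of passing through, and the conclusion fails.
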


Let $w \in \hat{\mathbb C} \setminus \mathcal{E}$, and let $\zeta_0 \in \mathbb C$ for which $\psi^o(\zeta_0) = w$.
Let $\alpha \in \mathbb C$ be given by
$$
\alpha = \zeta_0 - \phi^i(z_0)
$$
such that $\mathcal{L}_\alpha(z_0) = w$. Fix $\rho>0$ small, and for $\theta \in [0,2\pi]$ write
$$
\alpha(\theta) = \alpha + \rho e^{i\theta}
$$
and
$$
\epsilon_n(\theta) = \frac{\pi}{\alpha(\theta) + n}.
$$
It follows that
$$
f_{\epsilon_n(\theta)}^{\circ n}(z_0) \longrightarrow \mathcal{L}_{\alpha(\theta)} := \psi^o \circ T_{\alpha(\theta)} \circ \phi^i(z_0),
$$
uniformly over all $\theta \in [0,2\pi]$ as $n \rightarrow \infty$. Since the curve given by $\theta \mapsto \mathcal{L}_{\alpha(\theta)}(z_0)$ winds around $-1$, it follows that for $n$ sufficiently large there exists an $\alpha^\prime_n \in \mathcal{N}(\alpha,\rho)$ for which
$$
f_{\epsilon_n^\prime}^{\circ n}(z_0) = w
$$
is satisfied for
$$
\epsilon_n^\prime = \frac{\pi}{\alpha_n^\prime + n}.
$$
The general proof of Theorem \ref{thm:bifurcation} follows the same outline.

We end by proving that the literal statement of Theorem~\ref{thm:main} is false in the multivariate setting.

\begin{theorem}\label{thm:counter}
Let $\Delta \ge 3$ and let $D_\Delta$ be any neighborhood of the interval $[0, \lambda_\Delta)$. Then there exists a graph $G = (V,E)$ of maximum degree at most $\Delta$ and $\lambda \in D_\Delta^V$ such that $Z_G(\lambda) = 0$.
\end{theorem}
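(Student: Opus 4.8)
The plan is to reproduce the parabolic implosion behind Proposition~\ref{prop:regular trees}(ii) by a tree whose vertex weights all lie in $D_\Delta$, exploiting the freedom to use distinct weights at distinct vertices. By \eqref{eq:no zero not -1} it suffices to build a tree $T$ of maximum degree at most $\Delta$, rooted at $v_0$, with weights in $D_\Delta^{V(T)}$ and $R_{T,v_0}=-1$. If the weight of a vertex of $T$ depends only on its depth, Lemma~\ref{lem:weitz} gives that $R_{T,v_0}$ equals a composition $f_{\Delta-1,\mu_N}\circ\cdots\circ f_{\Delta-1,\mu_1}(z_0)$, where $\mu_j\in D_\Delta$ is the weight at depth $j$ and $z_0$ is a weight at a leaf, or more generally a ratio realized by some ``feeder'' subtree with weights in $D_\Delta$. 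So it is enough to choose $z_0$ and a finite word $\mu_1,\dots,\mu_N\in D_\Delta$ with this composition equal to $-1$.

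Recall from Subsection~\ref{sec:regular trees} and Section~\ref{sec:coordinate} that $\lambda_\Delta\in\partial U_{\Delta-1}$ is a parabolic parameter: $x_\Delta=\frac{1}{\Delta-2}$ is a fixed point of $f:=f_{\Delta-1,\lambda_\Delta}$ with $f'(x_\Delta)=-1$, and $f^{\circ n}(x)\to x_\Delta$ for every $x\ge 0$; combined with Lemma~\ref{lemma:critical} this shows that $[0,x_\Delta)$ and $-1$ lie in the parabolic basin $\mathcal B_f$, while $\mathcal E_f=\emptyset$. For $\lambda^*\in[0,\lambda_\Delta)$ close to $\lambda_\Delta$ the map $f_{\Delta-1,\lambda^*}$ has an attracting fixed point $x^*(\lambda^*)\in[0,x_\Delta)$ with multiplier close to $-1$, so a long regular block of weight $\lambda^*$ produces a ratio $z_0\in[0,\infty)\subset\mathcal B_f$ arbitrarily close to $x^*(\lambda^*)$. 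Applying Theorem~\ref{thm:bifurcation} to the holomorphic family $f_\epsilon:=f_{\Delta-1,\lambda_\Delta+\epsilon}$ with this $z_0$ and $w=-1$ yields $\epsilon_j\to0$ and $N_j\to\infty$ with $f_{\Delta-1,\lambda_\Delta+\epsilon_j}^{\circ N_j}(z_0)=-1$.

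The hard part is that the parameters $\lambda_\Delta+\epsilon_j$ need not lie in $D_\Delta$: the implosion is driven by perturbing $\lambda_\Delta$ in a direction transverse to $[0,\lambda_\Delta)$, whereas $D_\Delta$ may be only a thin cusp at $\lambda_\Delta$. To circumvent this I would establish a non-autonomous version of the implosion, in which the single orbit $f_{\Delta-1,\lambda_\Delta+\epsilon_j}^{\circ N_j}$ is replaced by a composition $f_{\Delta-1,\mu_N}\circ\cdots\circ f_{\Delta-1,\mu_1}$ whose letters $\mu_i$ are arbitrarily small perturbations of $\lambda^*\in[0,\lambda_\Delta)$ — hence in $D_\Delta$, $D_\Delta$ being open around $\lambda^*$. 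For most of its length the composition stays within any prescribed distance of $x_\Delta$ because the dynamics is near-parabolic, and during this long passage one has two degrees of freedom with which to steer it through the parabolic bottleneck: the accumulated incoming-Fatou-coordinate phase, which can be tuned by the corrections $\mu_i-\lambda^*$; and the fact that, near $x_\Delta$, the map $f_{\Delta-1,\mu}$ for $\mu$ in a genuine two-dimensional neighbourhood of $\lambda_\Delta$ — including the transverse, out-of-cusp directions that drive the classical implosion — can be imitated by a single vertex of real weight in $D_\Delta$ whose children are not all identical, using short feeder subtrees of weights in $D_\Delta$ to supply the required (possibly complex) auxiliary ratios close to $x_\Delta$. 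Carrying out the Lavaurs/Douady analysis for such a slowly varying family, one can make the composition exit the bottleneck in a prescribed direction and then, by a final bounded block of maps together with the winding-number/intermediate-value argument from the sketch of Theorem~\ref{thm:bifurcation} (here $-1\in\mathcal B_f$ and $\mathcal E_f=\emptyset$), land on $-1$ exactly. Reading off the weights along this composition produces the desired tree $T$ with weights in $D_\Delta$ and $Z_T(\lambda)=0$.

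I expect the non-autonomous implosion of the third paragraph to be the main obstacle: upgrading Theorem~\ref{thm:bifurcation}, whose perturbations point in an uncontrolled direction, to one whose perturbations all remain in an arbitrarily thin neighbourhood of $[0,\lambda_\Delta)$. The device of simulating out-of-cusp parameters near $x_\Delta$ by vertices with non-identical subtrees (together with a small bootstrap to enlarge the set of auxiliary ratios available near $x_\Delta$) is what should make this go through, and making that simulation and the accompanying Fatou-coordinate estimates precise is where the real work lies.
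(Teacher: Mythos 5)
Your reduction is the paper's: pass to a tree whose weights depend only on depth, so that by Lemma~\ref{lem:weitz} the root ratio is a composition $f_{\Delta-1,\mu_N}\circ\cdots\circ f_{\Delta-1,\mu_1}(z_0)$ with all $\mu_i\in D_\Delta$, and by \eqref{eq:no zero not -1} it suffices to make this composition equal to $-1$. But the heart of your argument --- the ``non-autonomous implosion'' with all parameters confined to an arbitrarily thin neighborhood of the real segment $[0,\lambda_\Delta)$, together with the device of simulating out-of-cusp parameters near $x_\Delta$ by vertices with non-identical feeder subtrees --- is not carried out; you yourself flag it as the main obstacle and leave the Lavaurs-type estimates and the simulation step unproved. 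The simulation step is particularly shaky: to supply a vertex with genuinely complex auxiliary ratios near $x_\Delta$ built from weights in a thin neighborhood of the real interval, you would already need to know that a two-dimensional set of ratios near $x_\Delta$ is reachable, which is essentially the kind of reachability statement you are trying to prove; a priori the ratios produced by almost-real weights form only a thin set near the real axis, and steering through the parabolic gate with such parameters is exactly the difficulty that Theorem~\ref{thm:bifurcation} (whose perturbations point in an uncontrolled direction) does not resolve. So as written there is a genuine gap.

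The gap is avoidable because no parabolic implosion is needed at all; in the paper that machinery serves only Proposition~\ref{prop:regular trees}(ii). Since $x_\Delta=\frac{1}{\Delta-2}<\lambda_\Delta$, the parabolic fixed point itself lies on the interval $[0,\lambda_\Delta)$, so the neighborhood $D_\Delta$ contains a full complex neighborhood $V$ of $x_\Delta$; choosing the leaf weight $\lambda_1\in V$ makes the initial ratio $x_1=\lambda_1$ an arbitrary point of $V$. Now $x_\Delta$ lies in the Julia set of $f_{\Delta-1,\lambda_\Delta}$ and the exceptional set of $f_{\Delta-1,\lambda}$ is empty, so by Montel's theorem and compactness of $\widehat\C$ there is an $N$ with $f_{\Delta-1,\lambda_\Delta}^{\circ N}(V)=\widehat\C$, and the same holds for real $\lambda\in D_\Delta$ sufficiently close to $\lambda_\Delta$. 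Hence $-1$ is reached by a composition using only two distinct weights, one near $x_\Delta$ at the leaves and one near $\lambda_\Delta$ at all other levels, both in $D_\Delta$. In other words, the two-dimensional freedom you were trying to wring out of the parameter is already available in the starting value, and the blow-up property of the Julia set replaces all Fatou-coordinate analysis.
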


We will in fact use regular trees $G$ for which all vertices on a a given level will have the same values $\lambda_{v_i}$. In this setting we are dealing with a non-autonomous dynamical system given by the sequence
$$
x_k = \frac{\lambda_k}{(1+x_{k-1})^{\Delta-1}},
$$
with $x_0=0$ and where each $\lambda_k \in D_\Delta$. Hence Theorem~\ref{thm:counter} is implied by the following proposition.

\begin{prop}\label{prop:counter}
Given $\Delta$ and $D_\Delta$ as in Theorem~\ref{thm:counter}, there exist an integer $N \in \mathbb N$ and $\lambda_0, \ldots ,\lambda_N \in D_\Delta$ which give $x_N = -1$.
\end{prop}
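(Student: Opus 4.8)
The plan is to reduce the statement to bringing the orbit merely \emph{close} to $-1$, and then to bring it there by combining a long, essentially real, stretch of iteration with the parabolic bifurcation mechanism of Theorem~\ref{thm:bifurcation}, crucially exploiting that in the non-autonomous setting we may vary the parameter away from $\lambda_\Delta$. Write $d=\Delta-1$ and let $f:=f_{d,\lambda_\Delta}$, the parabolic map with fixed point $x_\Delta=1/(d-1)$; by Corollary~\ref{cor:g prime is bounded} and the remark after it the half-line $[0,\infty)$ lies in the parabolic basin $\mathcal{B}$ of $f$, and since $f(-1)=\infty$ and $f(\infty)=0$, also $-1\in\mathcal{B}$. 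The first reduction is an elementary trick using that $D_\Delta$ contains some disc $D(0,\rho_0)$ about $0$: it suffices to find $M$ and $\lambda_1,\dots,\lambda_M\in D_\Delta$ with $x_M=-1+\eta$ for some $\eta\neq0$ with $|\eta|^d<\rho_0$, because the single further choice $\lambda_{M+1}:=-\eta^{d}\in D(0,\rho_0)\subseteq D_\Delta$ then yields $x_{M+1}=-\eta^{d}/(1+(-1+\eta))^{d}=-\eta^{d}/\eta^{d}=-1$. Hence it is enough to produce a $D_\Delta$-orbit starting at $0$ that enters the punctured disc $D(-1,\rho_0^{1/d})\setminus\{-1\}\subseteq\mathcal{B}$.

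To produce such an orbit, I would first use the real parameter $\lambda_k=\lambda_\Delta-\gamma\in[0,\lambda_\Delta)\subseteq D_\Delta$ (with $\gamma>0$ small) for a large number of steps; since $[0,\infty)$ lies in the basin of the attracting fixed point of $f_{d,\lambda_\Delta-\gamma}$, this brings the orbit near $x_\Delta$, and because $\gamma$ is small the relevant portion of the orbit is governed by the parabolic map $f$. One then runs the argument behind Theorem~\ref{thm:bifurcation}: allowing the perturbation a small \emph{complex} component makes the implosion visible, so that the orbit can be driven from the neighborhood of $x_\Delta$ to any prescribed point of $\mathcal{B}\setminus\mathcal{E}_f$ — in particular into $D(-1,\rho_0^{1/d})\setminus\{-1\}$ — and a winding-number/degree argument upgrades ``arbitrarily close'' to an exact hit of this open target. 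What makes this compatible with $D_\Delta$ is that the complex steering need not come from parameters close to $\lambda_\Delta$, where $D_\Delta$ may be pinched arbitrarily thin; instead one supplies it at a single earlier step by taking $\lambda_1$ in a fixed disc $D(c,r)\subseteq D_\Delta$ centred at some $c\in(0,\lambda_\Delta)$, so that $x_1=\lambda_1$ already ranges over a full two-dimensional family inside $\mathcal{B}$. Together with the real ``time'' parameter coming from the length of the near-parabolic stretch, a Fatou-coordinate computation should show that this family sweeps out a neighborhood of $-1$.

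The hard part is exactly this last step: reconciling the parabolic implosion with the constraint that every $\lambda_k$ lie in $D_\Delta$. A direct appeal to Theorem~\ref{thm:bifurcation} yields parameters approaching $\lambda_\Delta$ from complex directions that a thin neighborhood $D_\Delta$ need not contain, so one genuinely has to use the non-autonomy — steering the bulk of the orbit by the real parameters $\lambda_\Delta-\gamma\in[0,\lambda_\Delta)$ and extracting the finitely many complex degrees of freedom from fixed discs of $D_\Delta$ situated away from the pinch point $\lambda_\Delta$. Controlling the incoming and outgoing Fatou coordinates well enough to see that these ingredients together cover a full neighborhood of $-1$, and then running the degree argument uniformly in the length of the near-parabolic stretch, is where the real work lies.
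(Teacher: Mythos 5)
Your reduction trick at the end is fine: since $D_\Delta$ contains a disc $D(0,\rho_0)$, hitting a point $x_M=-1+\eta$ with $0<|\eta|^{\Delta-1}<\rho_0$ lets you finish with $\lambda_{M+1}=-\eta^{\Delta-1}$. But the core of the proposition --- producing a $D_\Delta$-orbit that lands in a prescribed punctured neighborhood of $-1$ --- is exactly what you leave open, and you say so yourself. The difficulty you identify is genuine: a direct appeal to Theorem~\ref{thm:bifurcation} needs perturbations of $\lambda_\Delta$ in complex directions that a thin $D_\Delta$ need not contain, and for real $\lambda<\lambda_\Delta$ there is no implosion at all, since $f_{\Delta-1,\lambda}$ then has an attracting fixed point and the real orbit of $0$ simply converges to it without ever passing a ``gate''. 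Your proposed substitute (one complex degree of freedom from a fixed disc $D(c,r)\subset D_\Delta$ away from the tip, plus the integer length of the near-parabolic stretch, plus Fatou-coordinate control and a winding argument) is only a program; whether that two-parameter family really sweeps a neighborhood of $-1$ uniformly in the stretch length is precisely the unproved heart of the matter. So as written the proposal has a genuine gap at the decisive step.

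The paper closes this step by a much softer mechanism that avoids parabolic implosion entirely. Since $x_\Delta=\tfrac{1}{\Delta-2}<\lambda_\Delta$, the set $D_\Delta$ contains a full neighborhood $V$ of the parabolic fixed point $x_\Delta$, and $V$ is reachable in a single step because $x_1=\lambda_1$ may be any point of $D_\Delta$. The point $x_\Delta$ lies in the Julia set of $f_{\Delta-1,\lambda_\Delta}$, whose exceptional set is empty, so by the Montel blow-up property and compactness of the sphere there is a finite $N$ with $f^{\circ N}_{\Delta-1,\lambda_\Delta}(V)=\widehat{\C}$; this surjectivity persists under small perturbation of the parameter, in particular for real $\lambda<\lambda_\Delta$ close to $\lambda_\Delta$, which certainly lie in $D_\Delta$. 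Hence the set of reachable values $x_{N+1}$ is all of $\widehat{\C}$ and one hits $-1$ exactly, using only two distinct parameter values and none of the Fatou-coordinate or degree machinery (nor your closeness-to-$-1$ reduction). If you want to salvage your outline, this is the observation to import: you do not need to steer the orbit through the parabolic gate; iterates of $f_{\Delta-1,\lambda}$ already blow any neighborhood of the Julia-set point $x_\Delta$ up to the whole sphere, and such a neighborhood sits inside $D_\Delta$ away from the pinch at $\lambda_\Delta$.
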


The proof follows from the following lemma, which can be found in \cite{M06} and is a direct consequence of Montel's Theorem.

\begin{lemma}
Let $f$ be a rational function of degree at least $2$, let $x$ lie in the Julia set of $f$, and let $V$ be a neighborhood of $x$. Then
$$
\bigcup_{n \in \mathbb N} f^n(V) = \hat{\mathbb C} \setminus \mathcal{E}_f,
$$
where $\mathcal{E}_f$ is the exceptional set of $f$.
\end{lemma}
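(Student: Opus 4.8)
The plan is to run the classical Montel-theorem ``blow-up'' argument for Julia sets. Write $S := \bigcup_{n} f^{n}(V)$ and $E := \widehat{\mathbb C}\setminus S$ for the omitted set; the substance of the lemma is that $E$ is a finite, completely invariant set, hence contained in $\mathcal E_f$, together with the (easy, mildly delicate) reverse inclusion. First I would dispose of the inclusion $S \subseteq \widehat{\mathbb C}\setminus \mathcal E_f$: since $\mathcal E_f$ consists of superattracting periodic points it lies in the Fatou set, so $x\in J(f)$ forces $x\notin\mathcal E_f$; shrinking $V$ if necessary (harmless, and implicitly assumed in the statement) we may take $V\cap\mathcal E_f=\emptyset$, and then complete invariance of $\mathcal E_f$ gives $f^{n}(V)\cap\mathcal E_f=\emptyset$ for every $n$, i.e.\ $S\cap\mathcal E_f=\emptyset$.

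The main step is the reverse inclusion. Suppose for contradiction that $E$ contains three distinct points $a,b,c$. Then every map in the family $\{\,f^{n}|_V : V\to\widehat{\mathbb C}\,\}$ omits the common triple $\{a,b,c\}$, so by Montel's theorem (the fundamental normality test) this family is normal on $V$. But $V$ is a neighbourhood of the Julia point $x$, and by the very definition of $J(f)$ the family $\{f^{n}\}$ is not normal on any neighbourhood of $x$ --- a contradiction. Hence $|E|\le 2$; in particular $E$ is finite.

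Next I would check that $E$ is completely invariant, i.e.\ $f^{-1}(E)=E$. If $z\in f^{-1}(E)$ but $z\in f^{n}(V)$ for some $n$, then $f(z)\in f^{n+1}(V)\subseteq S$, contradicting $f(z)\in E$; thus $z\notin S$, i.e.\ $z\in E$, which proves $f^{-1}(E)\subseteq E$. Since $f$ is surjective of degree $\ge 2$, every point of $E$ has at least one preimage, so $|f^{-1}(E)|\ge|E|$; combined with $f^{-1}(E)\subseteq E$ and finiteness of $E$ this gives $f^{-1}(E)=E$. By the defining maximality property of the exceptional set, $E\subseteq\mathcal E_f$, hence $S=\widehat{\mathbb C}\setminus E\supseteq\widehat{\mathbb C}\setminus\mathcal E_f$. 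Together with the first paragraph this yields $S=\widehat{\mathbb C}\setminus\mathcal E_f$.

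The argument is essentially bookkeeping around Montel's theorem, and there is no serious obstacle. The two points needing care are: (a) passing correctly from ``$x\in J(f)$'' to ``$\{f^{n}|_V\}$ is not normal'' --- this is precisely the definition of the Julia set --- and applying Montel with a triple of values common to the whole family, which is automatic here because all $f^{n}|_V$ omit the same set $E$; and (b) the inclusion $S\cap\mathcal E_f=\emptyset$, which is exactly why $V$ should be taken small enough to miss the finite set $\mathcal E_f$ (for a large neighbourhood only the inclusion ``$\supseteq$'' survives, but that is the whole content anyway). Neither step requires any computation.
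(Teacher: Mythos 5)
Your proof is correct and is exactly the standard Montel ``blow-up'' argument that the paper invokes (the paper gives no proof of this lemma, merely citing Milnor and noting it is a direct consequence of Montel's theorem). Your added remark that $V$ must be taken disjoint from $\mathcal{E}_f$ for the stated equality (rather than just the inclusion $\supseteq$) to hold is a fair and correct reading of the implicit hypothesis, and the only direction actually used in the paper is the inclusion $\supseteq$, which your argument establishes for arbitrary $V$.
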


Let $\Delta \ge 3$ and $\lambda \neq 0$. As noted before in this appendix, the exceptional set of the function $f_{\Delta-1, \lambda}$ is empty. Thus, by compactness of the Riemann sphere, it follows that for any neighborhood $V$ of a point in the Julia set there exists an $N \in \mathbb N$ such that $f_{\Delta-1, \lambda}^N(V) = \hat{\mathbb C}$.

To prove Proposition~\ref{prop:counter}, let us denote the set of all possible values of points $x_N$ by $A$. Then $A$ contains $D_\Delta$, so in particular a neighborhood $V$ of the parabolic fixed point $x_\Delta$ of the function $f_{\Delta-1, \lambda_\Delta}$.

The parabolic fixed point $x_\Delta$ is contained in the Julia set of $f_{\Delta-1, \lambda_\Delta}$, thus it follows that there exists an $N \in \mathbb N$ for which $f_{\Delta-1, \lambda_\Delta}^N(V) = \hat{\mathbb C}$. But then $f_{\Delta-1, \lambda}^N(V) = \hat{\mathbb C}$ holds for $\lambda \in D$ sufficiently close to $\lambda_\Delta$, and thus $A = \hat{\mathbb C}$. But then $-1 \in A$, which completes the proof of Proposition~\ref{prop:counter}.

Note that in this construction the $\lambda_i$'s take on exactly two distinct values. On the lowest level of the tree they are very close to $x_\Delta$, and on all other levels they are very close to $\lambda_\Delta$. The thinner the set $D_\Delta$, the more levels the tree needs to have.

\section*{Acknowledgement}
We thank Heng Guo for pointing out an inaccuracy in an earlier version of this paper.
\change{We moreover thank Roland Roeder and Ivan Chio for helpful comments and spotting some typos. We also thank an anonymous referee for helpful comments.}

\end{appendix}

\end{document}